\newtheorem{theorem}{Theorem}[section]
\newtheorem{lemma}{Lemma}[section]
\theoremstyle{remark}
\newtheorem{remark}{Remark}[section]
\theoremstyle{definition}
\title[Fast expansion into harmonics on the disk]{Fast expansion
into harmonics on the disk:\\ a steerable basis with fast radial convolutions}
\author[N.F. Marshall]{Nicholas F. Marshall}
\email{marsnich@oregonstate.edu}
\author[O. Mickelin]{Oscar Mickelin}
\email{hm6655@princeton.edu}
\author[A. Singer]{Amit Singer}
\email{amits@math.princeton.edu}
\thanks{N.F.M. was supported in
part by NSF DMS-1903015. A.S. was supported in part by AFOSR FA9550-20-1-0266, the Simons Foundation Math+X
Investigator Award,  NSF BIGDATA Award IIS-1837992, NSF DMS-2009753, and NIH/NIGMS
1R01GM136780-01. }
\begin{document}

\maketitle

\begin{abstract}
We present a fast and numerically accurate method 
for expanding digitized $L \times L$ images representing functions on $[-1,1]^2$
supported on the disk $\{x \in \mathbb{R}^2 : |x|<1\}$ in the harmonics (Dirichlet
Laplacian eigenfunctions) on the disk. Our method, which we refer to as the Fast Disk Harmonics Transform (FDHT), runs in $\mathcal{O}(L^2 \log L)$ operations. This basis
is also known as the Fourier-Bessel basis, and it has several computational
advantages: it is orthogonal, ordered by frequency, and
steerable in the sense that images expanded in the basis can be rotated by
applying a diagonal transform to the coefficients. Moreover, we show that
convolution with radial functions can also be efficiently computed by applying a diagonal transform to the coefficients.
\end{abstract}

\section{Introduction}

\subsection{Motivation}
Decomposing a function into its Fourier series can be viewed as representing a
function in the eigenfunctions of the Laplacian on the torus $\mathbb{T}:=
[0,2\pi]$ where $0$ and $2\pi$ are identified. Indeed,
$$
    -\Delta e^{\imath kx} = k^2e^{\imath kx}.
$$
The eigenfunctions of the Laplacian (harmonics) on the disk
$\{x \in \mathbb{R}^2: |x| < 1\}$ that satisfy the Dirichlet boundary conditions can be written in polar
coordinates $(r,\theta) \in [0,1) \times [0,2\pi)$ as
\begin{equation} \label{eq:eigenfun}
\psi_{n k}(r,\theta) = c_{n k} J_n ( \lambda_{n k} r) e^{\imath n \theta} ,
\end{equation}
where $c_{n k}$ is a normalization constant, $J_n$ is the $n$-th order Bessel function of the first kind, and
$\lambda_{n k}$ is the $k$-th smallest positive root of $J_n$. The indices run over
$(n,k) \in \mathbb{Z} \times \mathbb{Z}_{> 0}$. The functions $\psi_{nk}$ satisfy
\begin{equation} \label{eq:eigenval}
    -\Delta \psi_{nk} =  \lambda_{n k}^2 \psi_{nk}.
\end{equation}
In this paper, we present
a fast and accurate transform of digitized $L \times L$ images into this eigenfunction basis often referred to as the Fourier-Bessel basis. 
For computational purposes, this basis is
convenient for a number of reasons: 
\begin{enumerate}[label={(\roman*)}]
\item \emph{Orthonormal:} these eigenfunctions are an orthonormal basis for square integrable functions
on the disk.
\item \emph{Ordered by frequency:} the basis functions are ordered by eigenvalues, which can be interpreted as frequencies
due to the connection with the Laplacian and Fourier series described above. Low-pass filtering can be performed by retaining basis coefficients up to a given threshold.
\item \emph{Steerable:} functions expanded in the basis can be rotated by
applying a diagonal transform corresponding to phase modulation of the coefficients.
\item \emph{Fast radial convolutions:} we show that the convolution with radial
functions  can be computed by applying a diagonal transform to the coefficients.
\end{enumerate}

\begin{figure}[t!]
\centering
\begin{tikzpicture}[scale=\textwidth/9cm]
\node[anchor=north west] at (0,2.5) {\includegraphics[width=3cm]{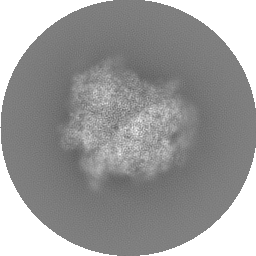}};
\node[anchor=north west] at (0,0) {\includegraphics[width=3cm]{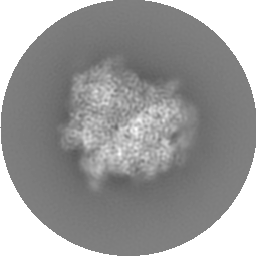}};
\node[anchor=north west] at (2.5,2.5) {\includegraphics[width=3cm]{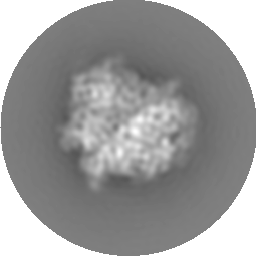}};
\node[anchor=north west] at (2.5,0) {\includegraphics[width=3cm]{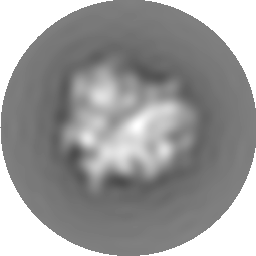}};
\node[anchor=north west] at (5.5,2.5) {\includegraphics[width=3cm]{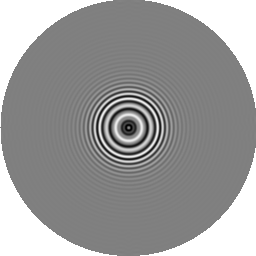}};
\node[anchor=north west] at (5.5,0) {\includegraphics[width=3cm]{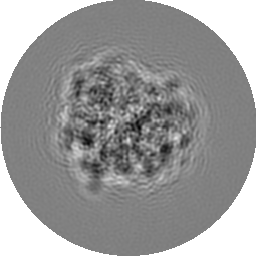}};
\node at (0,2.5) {\LARGE a};
\node at (0,0) {\LARGE b};
\node at (2.5,2.5) {\LARGE c};
\node at (2.5,0) {\LARGE d};
\node at (5.5,2.5) {\LARGE e};
\node at (5.5,0) {\LARGE f};
\end{tikzpicture}

\caption{ Illustration of our method for $L \times L$ images with $L=256$.
Original image (a), a low-pass filter of the original image using a decreasing
number of basis functions (b--d), radial
function (e), convolution of the original image with radial
function (f).}  \label{fig:01}
\end{figure}
Our FDHT method involves $\mathcal{O}(L^2\log L)$ operations and has precise accuracy guarantees. Python
code that implements our FDHT method is publicly available online\footnote{An implementation is available at \url{https://github.com/nmarshallf/fle_2d}.}. To the best of our knowledge, existing methods for computing the expansion coefficients in a steerable basis   \cite{MR3472531,Zhao2014,landa2017approximation,landa2017steerable} either have computational complexity $\mathcal{O}(L^3)$ or suffer from low numerical precision, see 
\S \ref{sec:past_work} for a more detailed discussion of past work.

Steerable bases have been utilized in numerous image-processing problems
including image alignment \cite{rangan2020factorization}, image classification
\cite{Zhao2014} and image denoising \cite{MR3472531}, including applications to
machine learning \cite{cheng2018rotdcf,cohen2016steerable,weiler2018learning} and data-driven science, such as  applications
to cryo-electron microscopy (cryo-EM) \cite{cheng2015primer,nogales2015cryo},
and computer vision  \cite{papakostas2007new}, among other areas.

There are many possible choices of steerable bases, for instance, Slepian functions (also known as 2-D prolate spheroidal wave functions)
\cite{landa2017approximation,landa2017steerable,slepian1961prolate}, or Zernike polynomials which are widely used in optics \cite{von1934beugungstheorie}.
The harmonics on the disk
(which satisfy Dirichlet boundary conditions) \cite{MR3472531,Zhao2014} are one natural choice due to their
orthogonality, ordering by frequency,  and fast radial convolution.

We illustrate the frequency ordering property of the Laplacian eigenbasis by
performing a  low-pass
filter by projecting onto the span of eigenfunctions whose eigenvalues are
below a sequence of bandlimits that decrease the number of basis functions
successively by 
factors of four, starting from $39593$ coefficients. Since the basis is orthonormal, this is equivalent to setting
coefficients above the bandlimit equal to zero; see Fig. \ref{fig:01}
(a--d). Further, we demonstrate the radial convolution property by illustrating the convolution
with a point spread function, which is a function used in computational microscopy \cite{wade1992brief}; see Fig. \ref{fig:01} (e--f).
The image used for this example is a tomographic projection of a 3-D density map representing a  bio-molecule (E. coli 70S ribosome) \cite{shaikh2008spider}.

\subsection{Notation} \label{sec:notation}
We denote the $L^q$-norm of a function $g: \mathbb{R}^2 \rightarrow \mathbb{C}$ and the $\ell^q$-norm of a vector $v \in \mathbb{C}^d$ by $ \|g\|_{L^q} := (\int_{\mathbb{R}^2} |g(x)|^q dx)^{1/q}$ and  $\|v\|_{\ell^q} := (\sum_{j=1}^d |v_j|^q )^{1/q}$,
respectively.

Let $f$ be an $L \times L$ image whose pixel values $f_{j_1 j_2}$ are samples of a function $\tilde{f} : [-1,1]^2 \rightarrow \mathbb{R}$ that is supported on the unit disk $\{ x \in
\mathbb{R}^2 : |x| < 1 \}$.  More precisely, we  define the pixel locations by
\begin{equation} \label{eq:pixel_locs}
x_{j_1 j_2} := (h j_1- 1, h j_2- 1),
\quad \text{where} \quad h := 1/\lfloor (L+1)/2 \rfloor,
\end{equation}
 and assume the pixel values satisfy $f_{j_1 j_2} = \tilde{f}(x_{j_1 j_2}).$ Let
\begin{equation} \label{eq:enum_p}
x_1,\ldots,x_p \quad \text{and} \quad f_1,\ldots,f_p
\end{equation}
denote an enumeration of the pixel locations and corresponding pixel values, respectively, where $p = L^2$ is the number of pixels in the image. For any given bandlimit $\lambda > 0$, let 
\begin{equation} \label{eq:m}
m = \{ (n,k) \in \mathbb{Z} \times \mathbb{Z}_{>0} : \lambda_{nk} \le \lambda\}
\end{equation} 
denote the number of Bessel function roots (square root of eigenvalues, see 
\eqref{eq:eigenval}) below the bandlimit $\lambda$, and let
\begin{equation} \label{eq:enum_m}
\lambda_1 \le \cdots \le \lambda_m \quad \text{and} \quad \psi_1,\ldots,\psi_m
\end{equation}
denote an enumeration of the Bessel function roots below the bandlimit and corresponding eigenfunctions, respectively. Let
$$
n_1,\ldots,n_m, \quad \text{and} \quad k_1,\ldots,k_m,
$$
be enumerations such that $\psi_{n_j k_j} = \psi_j$. In the following, we switch between using single subscript notation ($x_j$, $f_j$, $\lambda_j$, $\psi_j$) and double subscript notation ($x_{j_1 j_2}$, $f_{j_1 j_2}$, $\lambda_{nk}$, $\psi_{nk})$ depending on which is more convenient; the choice will be clear from the context.

\subsection{Main result} \label{sec:main_result}
We consider the linear transform
$B : \mathbb{C}^m \rightarrow \mathbb{C}^p$ which
maps coefficients to images by 
\begin{equation} \label{eq:operator_B}
(B \alpha)_j = \sum_{i=1}^m \alpha_i \psi_i(x_j) h,
\end{equation}
and its adjoint
transform $B^* : \mathbb{C}^p \rightarrow \mathbb{C}^m$ which maps images to
coefficients by
\begin{equation} \label{eq:operator_B^*}
(B^* f)_i =\sum_{j=1}^p f_j \overline{\psi_i(x_j)} h,
\end{equation}
where the normalization constant  $h$ is included so that $B^* B$ is close to the identity. To provide intuition about $B$ and  $B^*$, we visualize some of the basis functions $\psi_i$ in Figure \ref{fignew}.
The main result of this paper can be informally stated as follows.

\begin{figure}[t!]
\centering
\begin{tikzpicture}[scale=\textwidth/9cm]
\node[anchor=north west] at (0,1.5) {\includegraphics[width=.15\textwidth]{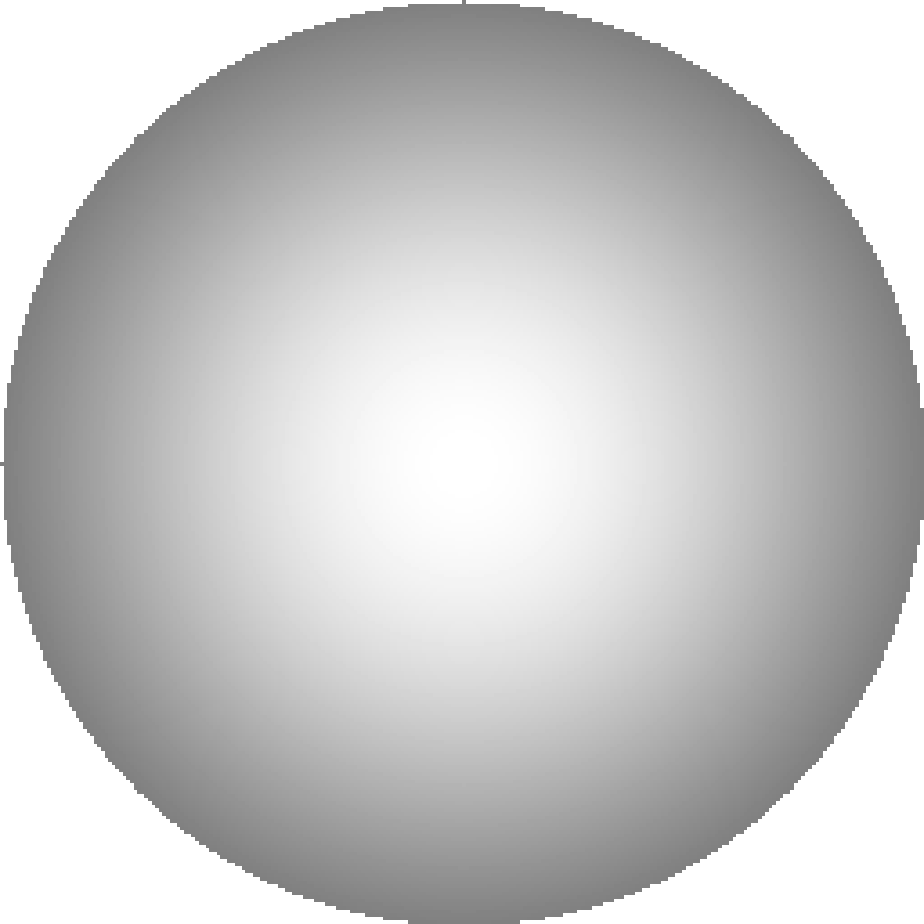}};
\node[anchor=north west] at (1.5,1.5) {\includegraphics[width=.15\textwidth]{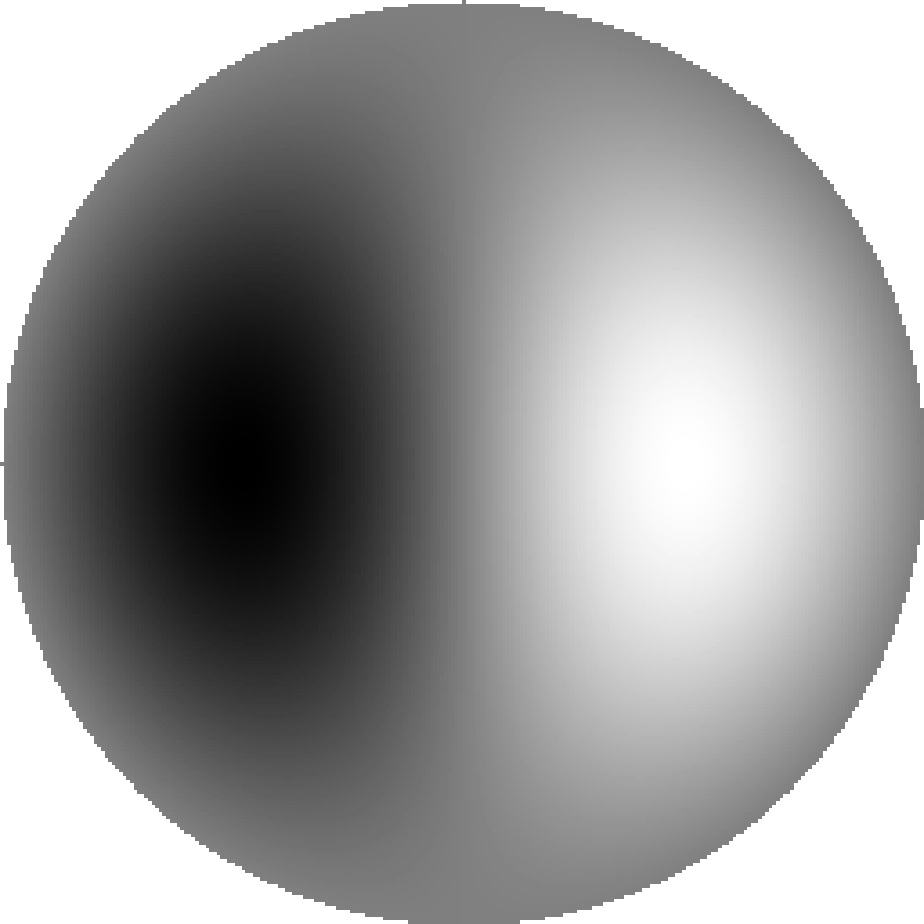}};
\node[anchor=north west] at (3,1.5) {\includegraphics[width=.15\textwidth]{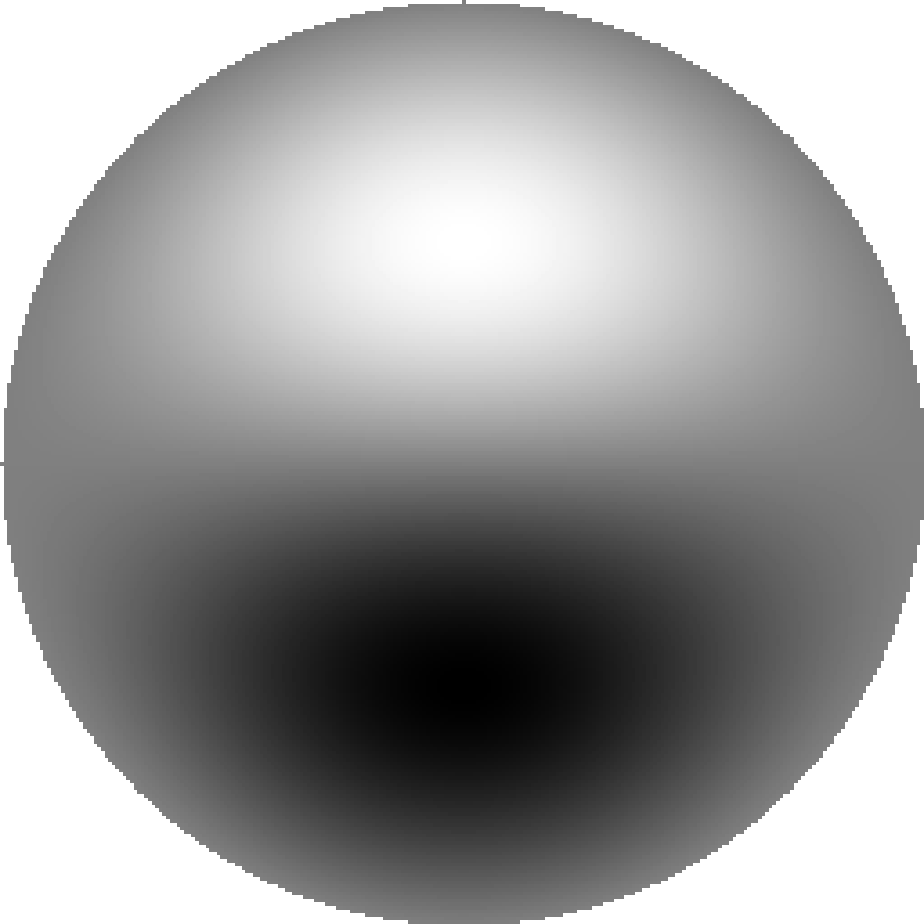}};
\node[anchor=north west] at (4.5,1.5) {\includegraphics[width=.15\textwidth]{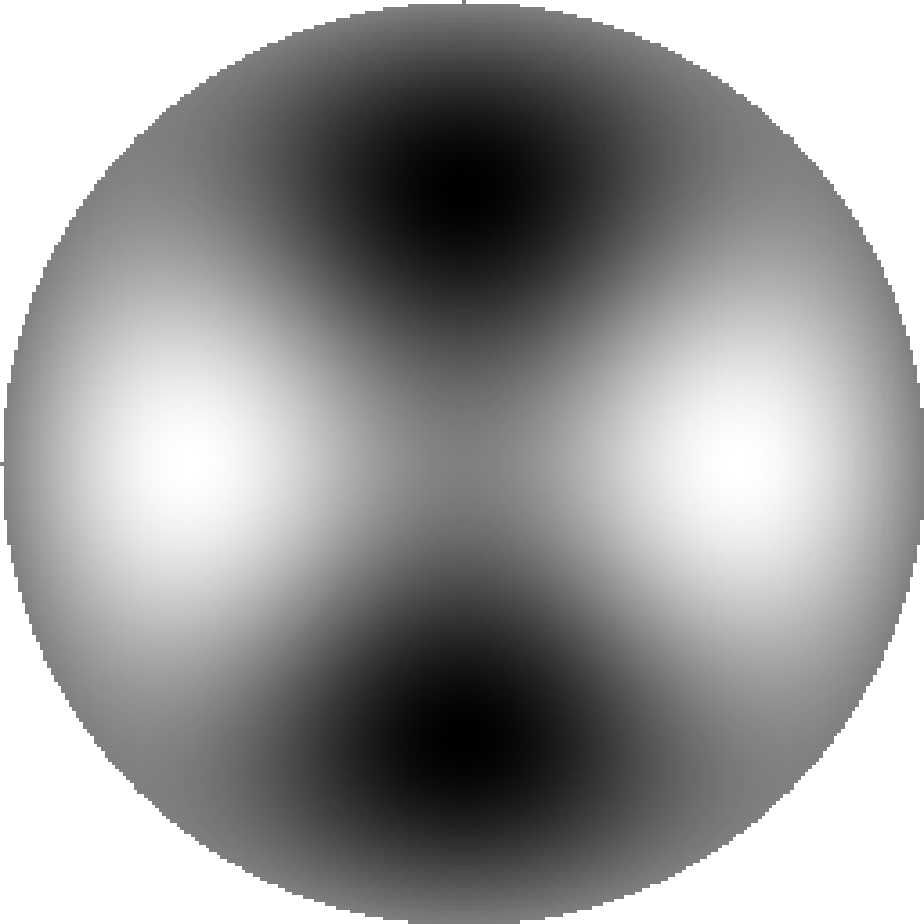}};
\node[anchor=north west] at (6,1.5) {\includegraphics[width=.15\textwidth]{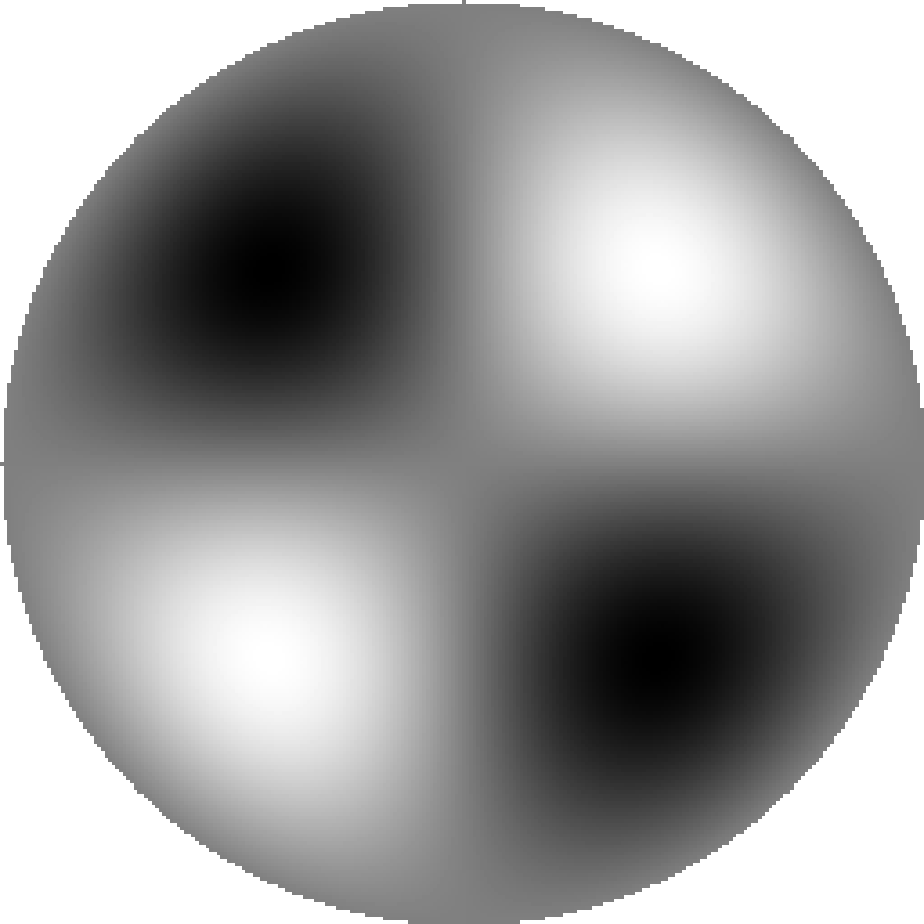}};
\node[anchor=north west] at (0,-0.3) {\includegraphics[width=.15\textwidth]{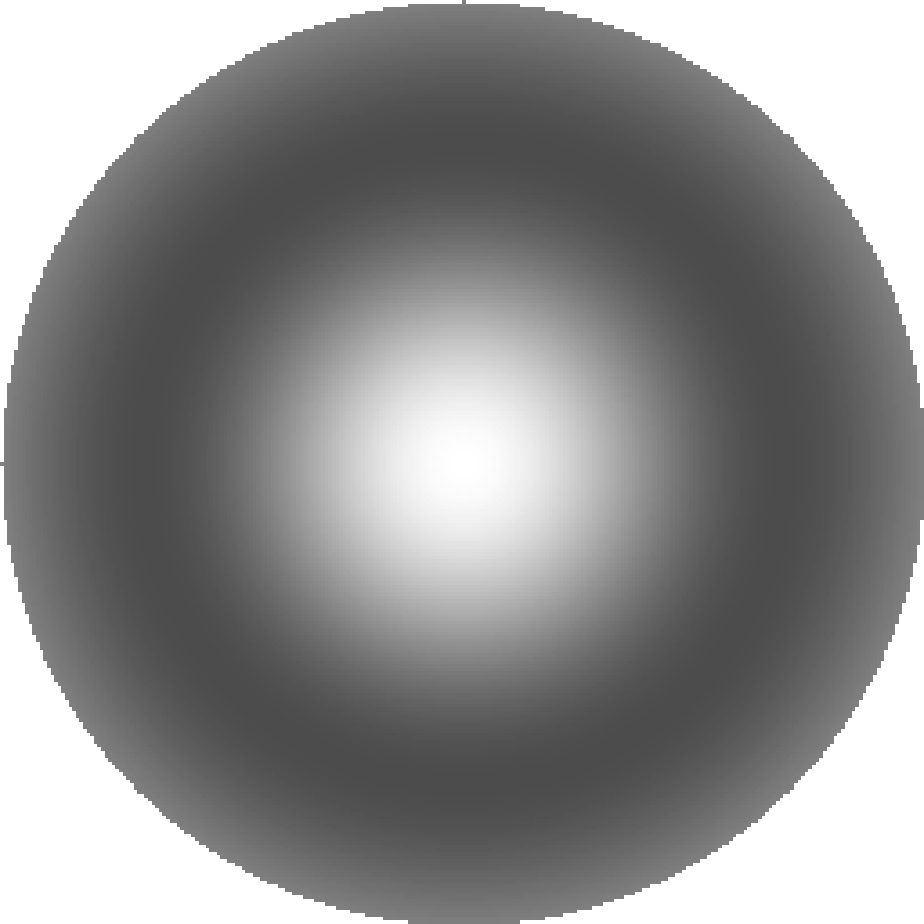}};
\node[anchor=north west] at (1.5,-0.3) {\includegraphics[width=.15\textwidth]{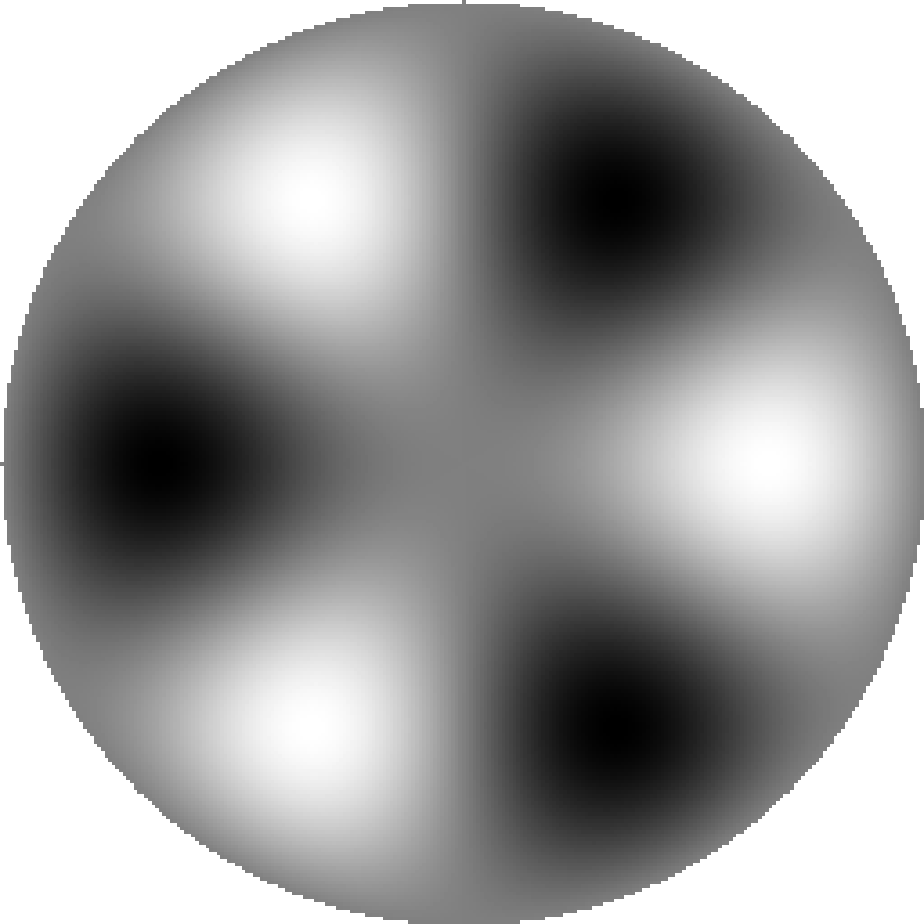}};
\node[anchor=north west] at (3,-0.3) {\includegraphics[width=.15\textwidth]{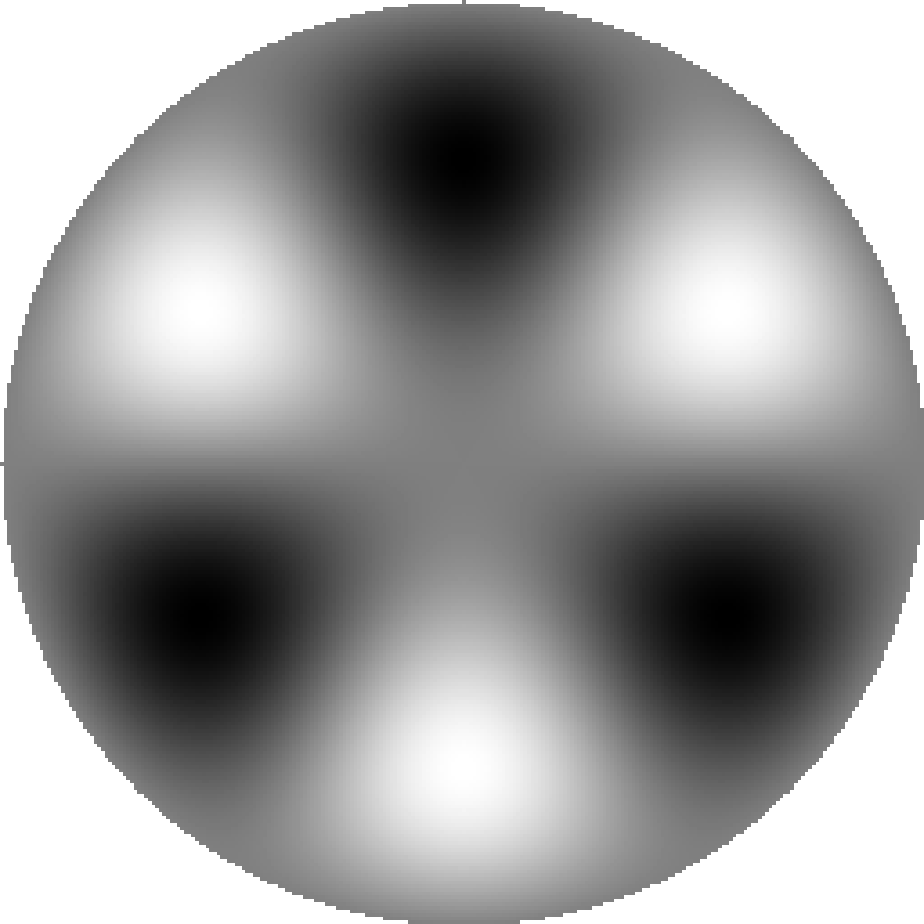}};
\node[anchor=north west] at (4.5,-0.3) {\includegraphics[width=.15\textwidth]{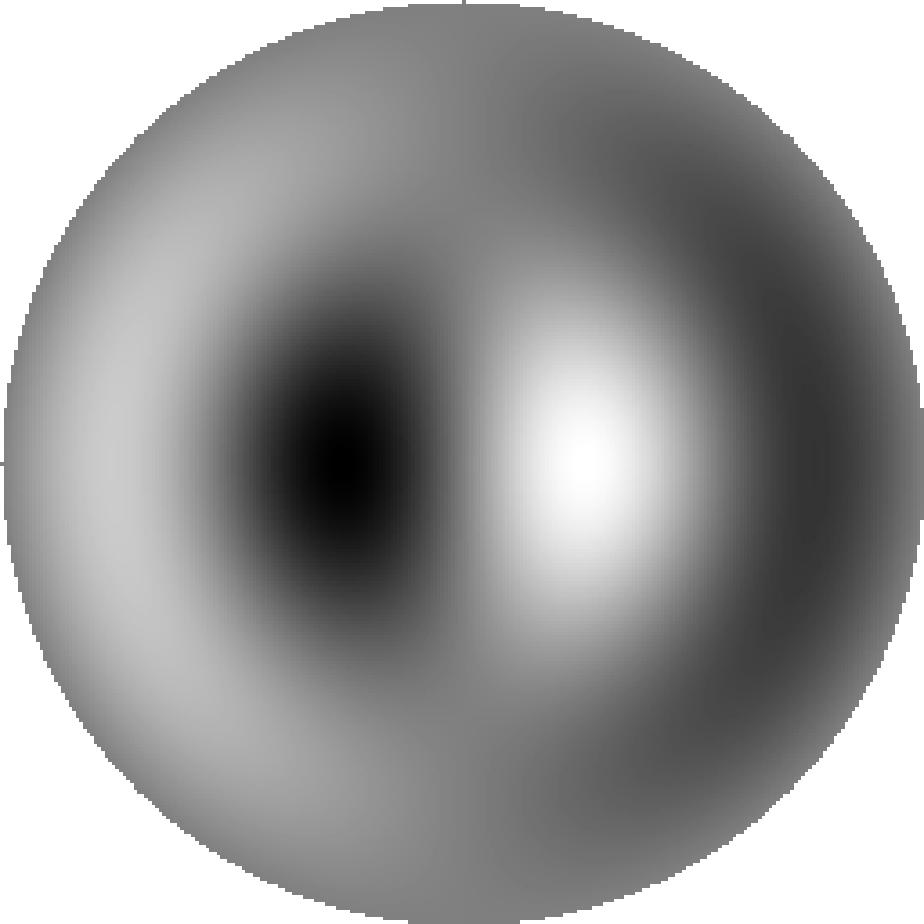}};
\node[anchor=north west] at (6,-0.3) {\includegraphics[width=.15\textwidth]{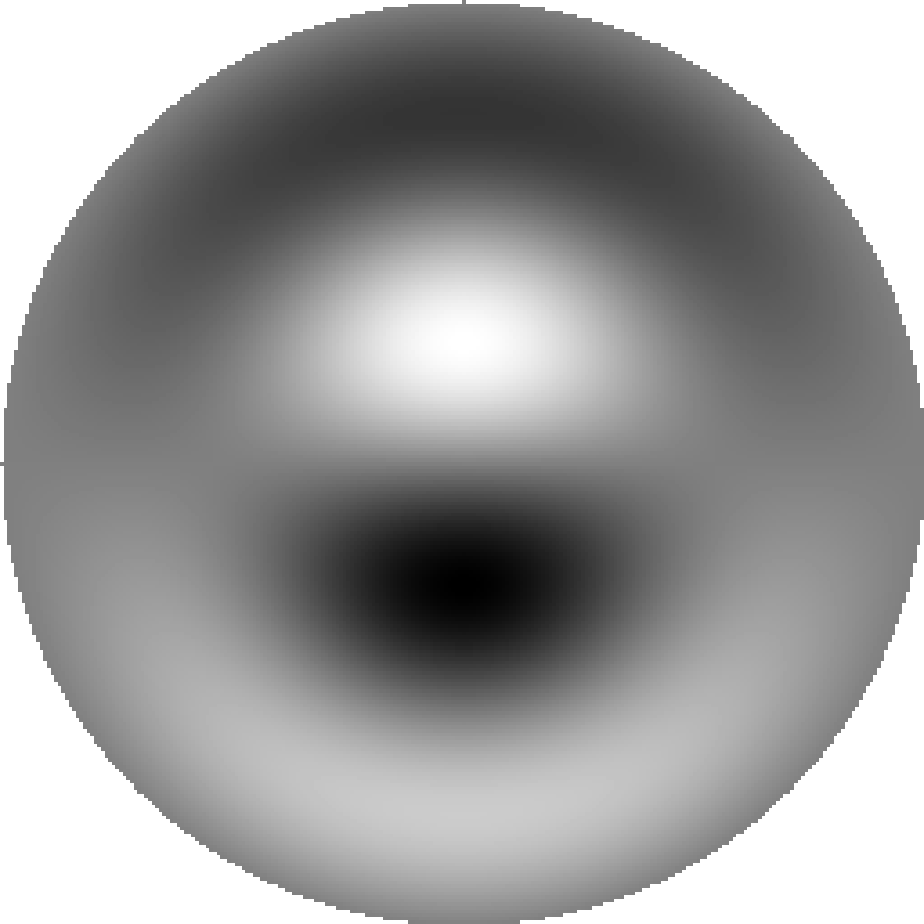}};
\node at (0.75,1.6) { $(0,1,2.4)$};
\node at (2.25,1.6) { $(1,1,3.8)$};
\node at (3.75,1.6) { $(-1,1,3.8)$};
\node at (5.25,1.6) { $(2,1,5.1)$};
\node at (6.75,1.6) { $(-2,1,5.1)$};
\node at (0.75,-0.2) { $(0,2,5.5)$};
\node at (2.25,-0.2) { $(3,1,6.4)$};
\node at (3.75,-0.2) { $(-3,1,6.4)$};
\node at (5.25,-0.2) { $(1,2,7.0)$};
\node at (6.75,-0.2) { $(-1,2,7.0)$};
\end{tikzpicture}
\caption{Illustration of the real version of eigenfunctions $\psi_{nk}$ (see Remark \ref{rmk:real_eigen}) associated with the smallest $10$ eigenvalues $\lambda_{nk}^2$. The triples above each figure show the corresponding values of $(n,k,\lambda_{nk})$, where $\lambda_{nk}$ is approximated to two digits of accuracy.}
 \label{fignew}
\end{figure}

\begin{theorem}[Informal Statement] \label{thm1informal}
Let $\varepsilon > 0$ be any fixed accuracy, and assume $m =\mathcal{O}( p)$. Then, algorithms described in \S \ref{sec:fast_details} apply the operators $B : \mathbb{C}^m \rightarrow \mathbb{C}^p$ and $B^* : \mathbb{C}^p \rightarrow \mathbb{C}^m$ with relative error less than $\varepsilon$ in $\mathcal{O}(p \log p)$ operations.
\end{theorem}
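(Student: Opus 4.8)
The plan is to exhibit an approximate factorization of $B^{*}$ into a composition of a nonuniform FFT, an angular FFT, and a one-dimensional radial transform, each running in $\mathcal{O}(p \log p)$ operations; since $B$ is the adjoint of $B^{*}$, the same bound for $B$ then follows by transposing the algorithm stage by stage. Writing the eigenfunctions in the product form \eqref{eq:eigenfun} and using that $J_n$ is real for integer order and real argument, I would first replace each factor $J_n(\lambda_{nk} r_j)\, e^{-\imath n \theta_j}$ appearing in \eqref{eq:operator_B^*} by its plane-wave (Jacobi--Anger) integral representation
\begin{equation*}
J_n(\lambda r)\, e^{-\imath n \theta} = \frac{\imath^n}{2\pi} \int_0^{2\pi} e^{-\imath \lambda r \cos(\theta - \phi)}\, e^{-\imath n \phi} \, d\phi .
\end{equation*}
Substituting this into \eqref{eq:operator_B^*} and discretizing the $\phi$-integral by the periodic trapezoidal rule on $N_\phi$ equispaced nodes $\phi_\ell$ turns the inner sum over pixels into the Fourier transform $\widehat{f}(\xi) = \sum_j f_j\, e^{-\imath x_j \cdot \xi}$ sampled on a polar grid at the frequencies $\xi = \lambda_{nk}\, \omega(\phi_\ell)$, where $\omega(\phi) = (\cos\phi, \sin\phi)$ and $x_j \cdot \omega(\phi_\ell) = r_j \cos(\theta_j - \phi_\ell)$. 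Since $\widehat{f}$ is the transform of a function supported on the unit disk, its angular content at radius $\rho$ is band-limited to frequencies of size $\mathcal{O}(\rho)$, so taking $N_\phi = \mathcal{O}(\lambda)$ (beyond which the quadrature converges super-algebraically) renders this step accurate to the target tolerance.

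The key structural observation is that the discretized transform then factors as
\begin{equation*}
(B^{*} f)_{nk} \approx h\, c_{nk}\, \imath^n\, F_n(\lambda_{nk}), \qquad F_n(\rho) := \frac{1}{N_\phi} \sum_{\ell} e^{-\imath n \phi_\ell}\, \widehat{f}\bigl(\rho\, \omega(\phi_\ell)\bigr),
\end{equation*}
so the computation splits cleanly into three fast stages. First, I would evaluate $\widehat{f}$ on a polar grid with $N_\phi = \mathcal{O}(\lambda)$ angular nodes and $N_\rho = \mathcal{O}(\lambda)$ equispaced radial nodes $\rho_s \in [0,\lambda]$ by a type-2 nonuniform FFT (NUFFT) from the $p$ Cartesian samples to the $\mathcal{O}(\lambda^2)$ polar targets, at cost $\mathcal{O}(p \log p + \lambda^2 \log(1/\varepsilon))$. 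Second, for each radial node I would compute the angular Fourier coefficients $F_n(\rho_s)$ by an FFT of length $N_\phi$, at total cost $\mathcal{O}(\lambda^2 \log \lambda)$. Third, for each angular frequency $n$ I would interpolate the one-dimensional samples $F_n(\rho_1), \ldots, F_n(\rho_{N_\rho})$ to the values $F_n(\lambda_{nk})$ at the Bessel roots; because $\widehat{f}$ is entire of exponential type with support radius one, each $F_n$ is band-limited in $\rho$, so this is a band-limited interpolation that a one-dimensional NUFFT performs in $\mathcal{O}(\lambda \log \lambda)$ operations per $n$. A final diagonal scaling by $h\, c_{nk}\, \imath^n$ completes the map. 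Summing over the $\mathcal{O}(\lambda)$ values of $n$ and invoking the hypothesis $m = \mathcal{O}(p)$ (equivalently $\lambda = \mathcal{O}(L)$, so $\lambda^2 = \mathcal{O}(p)$ and every polar grid has $\mathcal{O}(p)$ nodes) yields the overall bound $\mathcal{O}(p \log p)$.

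For the error guarantee I would control each stage separately and combine them by the triangle inequality and submultiplicativity of operator norms: the trapezoidal quadrature in $\phi$ and the radial band-limited interpolation both converge super-algebraically once their node counts exceed the Nyquist thresholds set by $\lambda$, and the NUFFTs achieve any prescribed tolerance with an oversampling factor depending only logarithmically on $\varepsilon$; taking $N_\phi, N_\rho = \mathcal{O}(\lambda + \log(1/\varepsilon))$ and interpolation order $\mathcal{O}(\log(1/\varepsilon))$ therefore keeps the complexity $\mathcal{O}(p\log p)$ for fixed $\varepsilon$. To pass from absolute to \emph{relative} error I would use that $B^{*} B$ is close to the identity (the normalization $h$ in \eqref{eq:operator_B} and \eqref{eq:operator_B^*} is chosen precisely for this), so that $B$ and $B^{*}$ are well-conditioned and a uniformly small perturbation of the discretized kernel produces a relative error below $\varepsilon$. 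I expect the main obstacle to be exactly the third stage: showing that the radial interpolation from the equispaced grid to the irregularly spaced Bessel roots is simultaneously near-linear in cost and spectrally accurate, since the roots $\lambda_{nk}$ are non-equispaced and their density varies with $n$, so the interpolation error must be bounded uniformly over the full range $0 \le n \lesssim \lambda$ while the work stays at $\mathcal{O}(p\log p)$.
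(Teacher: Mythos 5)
Your overall architecture is exactly the paper's: a type-2 NUFFT from the $p$ pixels to a polar grid with $\mathcal{O}(\sqrt{p})$ angles and $\mathcal{O}(\sqrt{p})$ radii, an angular FFT at each radius, a fast one-dimensional radial interpolation onto the Bessel roots followed by diagonal scaling by $h\,c_{nk}\,\imath^{n}$, and $B$ obtained by transposing each stage (this is Algorithms \ref{algo} and \ref{algoB}). Your angular quadrature claim also matches the paper's Lemma \ref{lem:num_angular_nodes}, which makes the super-algebraic convergence quantitative via the derivative bound $|g_{nj}^{(s)}| \le (\lambda_m+N_m)^s/(2\pi)$ and the trapezoidal-rule estimate of Lemma \ref{newlemma}. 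The genuine gap is in your radial stage. You sample $F_n$ at \emph{equispaced} radii and assert that, since $F_n$ is band-limited of type $1$ in $\rho$, a one-dimensional NUFFT performs the interpolation to the roots $\lambda_{nk}$ with spectral accuracy. As stated this step would fail: NUFFT-based spectral interpolation presumes the samples determine a trigonometric polynomial, i.e.\ periodicity on the sampling interval, but $F_n$ is not periodic on $[0,\lambda]$; equivalently, truncated sinc/cardinal interpolation from finitely many equispaced samples incurs boundary truncation errors that do not decay super-algebraically unless one oversamples, uses localized window kernels, and extends the sampling grid beyond the interval --- none of which you supply, and the resulting bound must hold uniformly over all $|n| \le N_m = \mathcal{O}(\sqrt{p})$, which is precisely the difficulty you flag but do not resolve. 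The paper sidesteps this entirely by using \emph{Chebyshev} nodes: from the integral representation \eqref{eq:jn_as_fourier_coeff}, every $\rho$-derivative of $J_n(r_j\rho)$ is bounded by $1$, so the classical Chebyshev remainder bound gives interpolation error at most $\left(e\sqrt{\pi p}/(4q)\right)^{q} \le \gamma$ with $q \approx 2.4\sqrt{p}$ nodes, uniformly in $n$ and $j$ (Lemma \ref{lem:num_radial_nodes}), and stability of the composed algorithm comes from the Lebesgue-constant bound $\sum_k |u_k(t)| \le 2 + \tfrac{2}{\pi}\log q$ (Lemma \ref{lemma:chebyshev_perturbation}).

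A second, smaller gap is your treatment of relative error. You propose to pass from absolute to relative error by arguing $B^*B$ is close to the identity, hence $B$ and $B^*$ are well-conditioned. The paper neither proves nor needs any such conditioning statement: its precise result (Theorem \ref{thm1}) measures $\ell^1$-$\ell^\infty$ relative error, $\|\tilde{B}^*f - B^*f\|_{\ell^\infty} \le \varepsilon\|f\|_{\ell^1}$, and obtains it by directly propagating the NUFFT, FFT, interpolation, and discretization errors through the three stages, using the Lebesgue-constant bound above together with $c_i h \le 2\sqrt{2}$ (Lemma \ref{lem:cj}). If you instead want error relative to $\|B^*f\|$, you would need a lower bound on the smallest singular value of $B^*$, which is nowhere established (the near-identity of $B^*B$ is only a heuristic motivating the factor $h$) and is a substantially harder statement than anything required for the theorem.
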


A more precise version of Theorem \ref{thm1informal} is stated in \S \ref{sec:proofmainresult} (see
Theorem \ref{thm1}), and supporting numerical results are reported in \S \ref{sec:numerics}. In particular, we show that the presented algorithm agrees with dense matrix multiplication for images with up to $p=160^2$ pixels and accuracy parameters as small as $\varepsilon = 10^{-14}$. We report timings of images with up to $p= 512^2$ (where constructing a dense transform matrix required a prohibitive amount of memory on our testing machine).  Moreover, we present a numerical example involving rotations, radial convolutions, and deconvolutions that illustrates the utility of this basis.

\begin{remark}[Real version of eigenfunctions] \label{rmk:real_eigen}
The complex eigenfunctions $\psi_{nk}$ can be transformed into real eigenfunctions $\tilde{\psi}_{nk}$ via the orthogonal transformation:
$$
\tilde{\psi}_{0k} = \psi_{0k}, \quad
\tilde{\psi}_{nk} = \frac{\psi_{nk} + (-1)^n \psi_{-nk}}{\sqrt{2}},
\quad \text{and} \quad
\tilde{\psi}_{-nk}
=
\frac{\psi_{nk} - (-1)^n \psi_{-nk}}{i \sqrt{2}},
$$
for $n \in \mathbb{Z}_{>0}$ and $k \in \mathbb{Z}_{>0}$; indeed, this follows from the definition \eqref{eq:eigenfun} of $\psi_{nk}$, the identify $J_{-n}(r) = (-1)^n J_n(r)$, and Euler's formula $e^{ix} = \cos x + i \sin x$.  
\end{remark}

\subsection{Relation to past work} \label{sec:past_work}
In this paper, we present a fast and accurate method to apply the operators $B$ and $B^*$
to vectors in
$\mathcal{O}(p \log p)$ operations for any fixed relative accuracy $\varepsilon$. We again emphasize that this is in contrast to previous results since, to the best of our knowledge, existing methods for computing the expansion coefficients in a steerable basis either require $\mathcal{O}(p^{3/2})$ operations \cite{landa2017approximation,landa2017steerable} or are heuristic in nature \cite{MR3472531,Zhao2014}. 
Further, we mention an interesting related work 
\cite{zhou2022spectral} appearing online after our work was posted on arXiv; it follows a similar approach to this paper but  considers a more general problem setting, where it achieves computational complexity $\mathcal{O}(p^{3/2})$.

The application of the operators $B$ and $B^*$ can be used in an iterative
method to determine least-squares optimal expansion coefficients for a given image, for instance, using modified Richardson iteration or the conjugate gradient method. Alternatively, 
applying $B^*$ to $f$ can be viewed as estimating the continuous inner products 
that define the coefficients by using quadrature points on a grid (and
potentially quadrature weights to provide an endpoint correction).

The most closely related previous approach \cite{MR3472531,Zhao2014} expands the Fourier-transform of images into the Fourier-Bessel basis by using a quadrature rule in the radial direction and an equispaced grid in the angular direction. This approach achieves complexity $\mathcal{O}(p\log p)$, but is heuristic and does not come with accuracy guarantees. Indeed, the authors of \cite{MR3472531,Zhao2014} do not claim any accuracy guarantees, and empirically the code associated with the paper has low numerical accuracy; part of the motivation for this paper is to make a fast and accurate method that is rigorously justified, and yields a code that agrees with direct calculation to close to machine precision.

\subsection{Organization} The remainder of the paper is organized as follows. 
In \S \ref{sec:analytic}, we describe the analytical apparatus underlying
the method. In \S \ref{sec:method}, we describe the
computational method. In \S \ref{sec:proofmainresult}, we state and prove Theorem \ref{thm1}, which is a more precise version of the informal result Theorem \ref{thm1informal}.
  In \S \ref{sec:numerics}, we present numerical results. In \S \ref{sec:discuss} we 
discuss the implications of the method and potential extensions.

\section{Analytical apparatus}\label{sec:analytic}
\subsection{Notation}
The  eigenfunctions of the Laplacian on the unit disk (that satisfy
Dirichlet boundary conditions) defined in \eqref{eq:eigenfun} can be extended to $\mathbb{R}^2$ as functions supported on the unit disk by
\begin{equation} \label{eq:eigenfun_extend}
\psi_{n k}(r,\theta) = c_{n k} J_n ( \lambda_{n k} r) e^{\imath n \theta} \chi_{[0,1)} (r), 
\end{equation}
for $(n,k) \in \mathbb{Z} \times \mathbb{Z}_{> 0}$, where $\chi_{[0,1)}$
denotes an indicator function for $[0,1)$. For the sake of
completeness, we note that the normalization constants $c_{nk}$
which ensure that $\|\psi_{nk}\|_{L^2} = 1$ are defined by
\begin{equation} \label{eq:eigenfun_const}
\begin{split}
        c_{nk} =
        \frac{1}{\pi^{1/2} |J_{n+1}(\lambda_{n k})|},
        \quad \text{for} \quad (n,k) \in \mathbb{Z} \times \mathbb{Z}_{>0},
        \end{split}
\end{equation}
see \cite[Eq. 10.6.3, Eq. 10.22.37]{dlmf}. 
We use the convention that
the Fourier 
transform $\widehat{f} : \mathbb{R}^2 \rightarrow \mathbb{C}$ of  an integrable
function $f : \mathbb{R}^2 \rightarrow \mathbb{C}$ is defined by
\begin{equation} \label{eq:fourier_transform}
\widehat{f}(\xi)  = \frac{1}{2\pi} \int_{\mathbb{R}^2} f(x) e^{-\imath x \cdot \xi} dx,
\end{equation}
where $x \cdot \xi$ denotes the Euclidean inner product. We define the convolution of two functions $f,g:\mathbb{R}^2 \rightarrow \mathbb{C}$ by
$$
(f * g)(x) = \int_{\mathbb{R}^2} f(x - y) g(y) dy.
$$
Furthermore, we will make use of the identity
\begin{equation}  \label{eq:jn_as_fourier_coeff}
J_n(r) = \frac{1}{2\pi} \int_0^{2\pi} e^{\imath r \sin \theta} e^{- \imath n \theta} d\theta,
\end{equation}
see for example \cite[Eq.~9.19]{temme1996special}. 
We note that the identities derived in the subsequent sections are similar to those derived in 
\cite{farashahi2021fourier,farashahi2019discrete,ghaani2018fourier,ghaani2022fourier}.

\subsection{Fourier transform of eigenfunctions}\label{sec:fourier_eigenfunc}
The analytic foundation for the presented fast method is the following expression 
for the Fourier transform of the functions $\psi_{nk}$ defined in  \eqref{eq:eigenfun_extend}, which we prove for completeness.

\begin{lemma}\label{lem:fourier_eigenfun}
The Fourier transform $\widehat{\psi}_{n k}$ can be expressed by
\begin{equation}\label{eq:fourier_eigenfun}
\widehat{\psi}_{n k}(\xi )
= (-\imath)^n e^{\imath n \phi } \int_0^1 c_{nk}J_n(\lambda_{n k} r) J_n( \rho r) r dr,
\end{equation}
where $(\rho,\phi)$ are polar coordinates for $\xi = (\rho \cos \phi, \rho \sin \phi)$.
\end{lemma}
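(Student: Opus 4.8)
The plan is to compute the Fourier transform directly in polar coordinates and reduce the resulting angular integral to the Bessel integral representation \eqref{eq:jn_as_fourier_coeff}. First I would write $x = (r\cos\theta, r\sin\theta)$ and $\xi = (\rho\cos\phi,\rho\sin\phi)$, so that $x\cdot\xi = r\rho\cos(\theta-\phi)$ and $dx = r\,dr\,d\theta$. Substituting the definition \eqref{eq:eigenfun_extend} into \eqref{eq:fourier_transform}, the indicator $\chi_{[0,1)}(r)$ restricts the radial integral to $[0,1]$, and by Fubini the transform factors as
$$
\widehat{\psi}_{nk}(\xi) = \frac{c_{nk}}{2\pi}\int_0^1 J_n(\lambda_{nk} r)\left(\int_0^{2\pi} e^{\imath n\theta} e^{-\imath r\rho\cos(\theta-\phi)}\,d\theta\right) r\,dr.
$$
Fubini is justified here because $\psi_{nk}$ is bounded and compactly supported, hence integrable.

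Next I would isolate the dependence on the angle $\phi$ of $\xi$. Shifting $\theta \mapsto \theta + \phi$ (the integrand is $2\pi$-periodic) pulls out a factor $e^{\imath n\phi}$ and leaves the inner integral $\int_0^{2\pi} e^{\imath n\theta} e^{-\imath r\rho\cos\theta}\,d\theta$, which no longer depends on $\phi$. The crux is to evaluate this integral and match it to \eqref{eq:jn_as_fourier_coeff}, whose exponent features $\sin$ rather than $\cos$. Using the identity $-\cos\theta = \sin(\theta - \pi/2)$ and then substituting $\beta = \theta - \pi/2$ converts $e^{-\imath r\rho\cos\theta}$ into $e^{\imath r\rho\sin\beta}$ and, by $2\pi$-periodicity, produces the phase factor $e^{\imath n\pi/2} = \imath^n$ in front of $\int_0^{2\pi} e^{\imath n\beta} e^{\imath r\rho\sin\beta}\,d\beta$.

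Then I would apply \eqref{eq:jn_as_fourier_coeff} with $n$ replaced by $-n$ to recognize that $\int_0^{2\pi} e^{\imath n\beta} e^{\imath r\rho\sin\beta}\,d\beta = 2\pi J_{-n}(r\rho)$, and invoke the reflection formula $J_{-n}(r) = (-1)^n J_n(r)$ to rewrite this as $2\pi(-1)^n J_n(r\rho)$. Collecting the phases gives $\imath^n(-1)^n = (-\imath)^n$, so the angular integral equals $2\pi(-\imath)^n e^{\imath n\phi} J_n(r\rho)$. Substituting back into the radial integral and cancelling the factors of $2\pi$ yields the claimed formula \eqref{eq:fourier_eigenfun}.

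I expect the only genuine obstacle to be the bookkeeping of the complex phases in the angular integral: aligning the substitution $\cos\theta = -\sin(\theta-\pi/2)$ with the sign convention in \eqref{eq:jn_as_fourier_coeff}, and tracking $\imath^n$ together with the reflection factor $(-1)^n$ from $J_{-n}$ so that they combine correctly to $(-\imath)^n$. Everything else reduces to the separation of variables in polar coordinates and a routine application of Fubini's theorem.
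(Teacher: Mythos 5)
Your proof is correct and follows essentially the same route as the paper: change to polar coordinates, then reduce the angular integral to the Bessel representation \eqref{eq:jn_as_fourier_coeff}. The only difference is bookkeeping — the paper absorbs everything into the single substitution $\theta \mapsto -\theta + \phi - \pi/2$, while you use two shifts plus the reflection formula $J_{-n} = (-1)^n J_n$; both yield the phase $(-\imath)^n e^{\imath n \phi}$ correctly.
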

\begin{proof}[Proof of Lemma \ref{lem:fourier_eigenfun}]
By the definition of the Fourier transform \eqref{eq:fourier_transform} we have
$$
\widehat{\psi}_{nk}(\xi) = \frac{1}{2\pi} \int_{\mathbb{R}^2} \psi_{nk}(x) e^{-\imath x \cdot \xi} dx.
$$
Changing to polar coordinates $\xi = (\rho \cos \phi, \rho \sin \phi)$ and $x = (r \cos \theta, r \sin \theta)$ gives
$$
\widehat{\psi}_{n k}(\xi) = \frac{1}{2\pi} 
\int_0^{2\pi}  \int_0^1 c_{n k} J_n(\lambda_{n k} r) e^{\imath n \theta} e^{-\imath r \rho \cos(\theta - \phi)} r dr d\theta ,
$$
where we used the fact that $x \cdot \xi = r \rho \cos(\theta - \phi)$. Changing variables $\theta \mapsto -\theta + \phi - \pi/2$ and taking the integral over $\theta$ gives
$$
\widehat{\psi}_{n k}(\xi) 
= (-\imath)^n e^{\imath n \phi }  \int_0^1 c_{nk}J_n(\lambda_{n k} r) J_n( \rho r) r dr,
$$
as desired.
\end{proof}

\subsection{Coefficients from eigenfunction Fourier transform}
Next, we observe how the coefficients of a function in the
eigenfunction basis can be computed by an application of
Lemma~\ref{lem:fourier_eigenfun}. In the following, we will write the arguments of Fourier transforms of functions in polar coordinates $(\rho, \phi)$. 
We have the following result:

\begin{lemma}\label{lem:coeff_from_beta}
Suppose that $\mathcal{I} \subset \mathbb{Z} \times \mathbb{Z}_{>0}$ is a finite index set, and set
\begin{equation} \label{eq:f_in_basis}
f = \sum_{(n,k) \in \mathcal{I}} \alpha_{nk} \psi_{n k},
\end{equation}
where $\alpha_{nk} \in \mathbb{C}$ are coefficients. Define $\beta_n : [0,\infty) \rightarrow \mathbb{C}$ by
\begin{equation}\label{eq:def_beta}
\beta_n(\rho) := \imath^n \int_0^{2\pi} \widehat{f}(\rho,\phi) e^{-\imath n \phi} d\phi.
\end{equation}
It then holds that
\begin{equation}\label{eq:beta_from_alpha}
\alpha_{n k} = c_{n k}\beta_n(\lambda_{n k}).
\end{equation}
\end{lemma}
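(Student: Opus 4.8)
The plan is to express $\widehat{f}$ via Lemma~\ref{lem:fourier_eigenfun}, substitute into the definition \eqref{eq:def_beta} of $\beta_n$, and then exploit two orthogonality relations---one angular (over $\phi$) and one radial (over $r$)---to isolate a single coefficient. The normalization constant \eqref{eq:eigenfun_const} will clean up the remaining constants at the end.

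First I would use linearity of the Fourier transform together with Lemma~\ref{lem:fourier_eigenfun} to write
$$
\widehat{f}(\rho,\phi) = \sum_{(n',k') \in \mathcal{I}} \alpha_{n'k'} (-\imath)^{n'} e^{\imath n' \phi} \int_0^1 c_{n'k'} J_{n'}(\lambda_{n'k'} r) J_{n'}(\rho r) \, r \, dr.
$$
Substituting into \eqref{eq:def_beta}, the factor $e^{-\imath n \phi}$ pairs with $e^{\imath n' \phi}$, and the angular integral $\int_0^{2\pi} e^{\imath(n'-n)\phi} d\phi = 2\pi \delta_{nn'}$ collapses the sum to the terms with $n' = n$. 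Since $\imath^n(-\imath)^n = 1$, this leaves
$$
\beta_n(\rho) = 2\pi \sum_{k' : (n,k') \in \mathcal{I}} \alpha_{n k'} c_{n k'} \int_0^1 J_n(\lambda_{n k'} r) J_n(\rho r) \, r \, dr.
$$

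Next I would evaluate at $\rho = \lambda_{nk}$ and invoke the radial orthogonality of Bessel functions with respect to the Dirichlet roots,
$$
\int_0^1 J_n(\lambda_{n k'} r) J_n(\lambda_{n k} r) \, r \, dr = \tfrac{1}{2} J_{n+1}(\lambda_{n k})^2 \, \delta_{k k'},
$$
which holds precisely because each $\lambda_{nk}$ is a positive zero of $J_n$: at such a zero $J_n'(\lambda_{nk}) = -J_{n+1}(\lambda_{nk})$, so the diagonal self-normalization reduces to $\tfrac12 J_{n+1}(\lambda_{nk})^2$. This picks out the single term $k' = k$, giving $\beta_n(\lambda_{nk}) = \pi \alpha_{nk} c_{nk} J_{n+1}(\lambda_{nk})^2$. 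Finally, substituting the normalization constant \eqref{eq:eigenfun_const}, which satisfies $\pi c_{nk}^2 J_{n+1}(\lambda_{nk})^2 = 1$, collapses this to $\beta_n(\lambda_{nk}) = \alpha_{nk}/c_{nk}$, which is exactly \eqref{eq:beta_from_alpha}.

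The two integral evaluations are routine; the conceptual crux---and the step I would treat most carefully---is the radial orthogonality relation, since it is exactly here that the defining property of $\lambda_{nk}$ as a zero of $J_n$ (equivalently, the Dirichlet boundary condition) enters and forces all off-diagonal contributions to vanish. I would also want to confirm that interchanging the finite sum with both integrals is justified, which is immediate because $\mathcal{I}$ is finite and each summand is continuous. Once both orthogonalities are in place, the identity follows by bookkeeping of the constants $\imath^n$, $2\pi$, and $c_{nk}$.
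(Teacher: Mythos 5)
Your proof is correct, and its skeleton matches the paper's: expand $\widehat{f}$ termwise via Lemma~\ref{lem:fourier_eigenfun}, kill the cross terms in $n$ with the angular integral $\int_0^{2\pi} e^{\imath(n'-n)\phi}\,d\phi = 2\pi\delta_{n,n'}$, then evaluate at $\rho=\lambda_{nk}$ and use radial orthogonality. The one genuine point of divergence is how the radial step is justified. The paper never computes the diagonal integral at all: it invokes orthonormality of the eigenfunctions $\psi_{nk}$ (orthogonality from self-adjointness of the Laplacian, normalization by the very definition of $c_{nk}$ as the constant making $\|\psi_{nk}\|_{L^2}=1$), so the whole radial integral collapses to $\delta_{k,k'}/(2\pi c_{nk}c_{nk'})$ in one stroke. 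You instead invoke the classical Dini/Fourier--Bessel orthogonality relation $\int_0^1 J_n(\lambda_{nk'}r)J_n(\lambda_{nk}r)\,r\,dr=\tfrac12 J_{n+1}(\lambda_{nk})^2\delta_{kk'}$ together with the explicit formula \eqref{eq:eigenfun_const}, which amounts to re-deriving that $c_{nk}$ is the correct normalizer. Your route is more explicit and self-contained (it verifies the constant rather than taking the normalization claim on faith), at the cost of importing two classical Bessel facts: the recurrence giving $J_n'(\lambda_{nk})=-J_{n+1}(\lambda_{nk})$ and the Dini relation itself; for completeness you should note these hold for negative integer orders too, via $J_{-n}=(-1)^nJ_n$ and $J_{n-1}(\lambda_{nk})=-J_{n+1}(\lambda_{nk})$ at a zero of $J_n$ --- the paper's abstract argument handles all $n\in\mathbb{Z}$ without comment precisely because eigenfunction orthonormality is sign-blind. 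Neither route is stronger; the paper's is shorter given its definitions, yours exposes the mechanism.
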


The proof is a direct consequence of Lemma \ref{lem:fourier_eigenfun}.

\begin{proof}[Proof of Lemma \ref{lem:coeff_from_beta}]
Observe that \eqref{eq:fourier_eigenfun} implies
\begin{equation}\label{eq:first_id}
    \imath^n \int_0^{2\pi} \widehat{\psi}_{n' k'}(\rho,\phi)e^{-\imath n \phi} d\phi = 2\pi \delta_{n,n'} \int_0^1 c_{n' k'} J_{n'}(\lambda_{n' k'} r) J_{n'}(\rho r) r dr,
\end{equation}
where $\delta_{n,n'} =1$ if $n = n'$ and $\delta_{n,n'} = 0$ otherwise.
Evaluating \eqref{eq:first_id} at radius $\rho = \lambda_{n k}$ gives
\begin{equation*}
\begin{split}
& \imath^n \int_0^{2\pi} \widehat{\psi}_{n' k'}(\lambda_{n k},\phi) e^{-\imath n \phi} d\phi  = 2\pi \delta_{n,n'}\int_0^1 c_{n' k'} J_{n'}(\lambda_{n' k'} r) J_{n'}(\lambda_{nk} r) r dr \\
&= 2\pi \delta_{n,n'}\int_0^1 c_{n k'} J_n(\lambda_{n k'} r) J_n(\lambda_{nk} r) r dr =  \frac{1}{c_{n k}} \delta_{n,n'} \delta_{k,k'},
\end{split}
\end{equation*}
where the final equality follows from the orthogonality of the eigenfunctions $\psi_{nk'}$ (which is a consequence of the fact that the Laplacian is self-adjoint). By the definition of $\beta_n$ in \eqref{eq:def_beta}, this implies that
\begin{equation*}
\begin{split}
    \beta_n(\lambda_{n k}) &= \!\!\!\! \sum_{(n',k') \in \mathcal{I}} \!\!\!\! \alpha_{n'k'} \imath^{n} \int_0^{2\pi} \!\! \widehat{\psi}_{n' k'}(\lambda_{nk},\phi)e^{-\imath n \phi} d\phi = \!\!\!\! \sum_{(n',k') \in \mathcal{I}} \frac{\alpha_{n'k'}}{c_{n'k'}}\delta_{n,n'} \delta_{k,k'} = \frac{\alpha_{nk}}{c_{nk}},
    \end{split}
\end{equation*}
which concludes the proof.
\end{proof}

\begin{remark}[Special property of Bessel functions]
We emphasize that the integral expression \eqref{eq:jn_as_fourier_coeff} of the Bessel function is crucial for the fast method of this paper. The possibility of extending the approach to create other fast transforms defined on domains in $\mathbb{R}^2$, therefore hinges on identifying equally useful integral expressions for the corresponding transforms.
\end{remark}

\subsection{Convolution with radial functions}
Let $g(x) = g(|x|)$ be a radial function. In this section, we observe how the convolution 
with $g$ can be computed via a diagonal transform of the coefficients.
More precisely, we compute the projection of the convolution with $g$ onto the span of any finite basis of the eigenfunctions $\psi_{nk}$. 
\begin{lemma} \label{lem:radial_conv}
Let $f$ be a function with coefficients $\alpha_{n k}$ as in \eqref{eq:f_in_basis}, and $g(x) = g(|x|)$ be a
radial function. We have
$$
P_\mathcal{I} (f*g) = \sum_{(n,k) \in \mathcal{I}} \alpha_{nk} \widehat{g}(\lambda_{nk}) \psi_{n k},
$$
where $P_{\mathcal{I}}$ denotes the 
orthogonal projection onto the span of 
$\{\psi_{n k}\}_{(n,k) \in \mathcal{I}}$. 
\end{lemma}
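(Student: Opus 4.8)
The plan is to turn the convolution into a pointwise radial multiplication using the convolution theorem, and then to read off the expansion coefficients with the recovery formula of Lemma~\ref{lem:coeff_from_beta}. By linearity of both convolution and $P_\mathcal{I}$, and since $f = \sum_{(n,k)\in\mathcal{I}}\alpha_{nk}\psi_{nk}$ as in \eqref{eq:f_in_basis}, it suffices to control a single $\psi_{nk}*g$. First I would apply the convolution theorem for the convention \eqref{eq:fourier_transform}: a change of variables gives $\widehat{f*g} = 2\pi\,\widehat{f}\,\widehat{g}$, and because $g(x)=g(|x|)$ is radial its Fourier transform is radial too, so $\widehat{g}(\xi)=\widehat{g}(\rho)$ with $\rho=|\xi|$. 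In polar coordinates this reads $\widehat{f*g}(\rho,\phi) = 2\pi\,\widehat{g}(\rho)\,\widehat{f}(\rho,\phi)$; the essential feature is that $\widehat{g}(\rho)$ depends only on the radius $\rho$ and not on the angle $\phi$.

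The key step is then to feed $\widehat{f*g}$ into the angular transform $\beta_n$ of \eqref{eq:def_beta}. Since $\widehat{g}(\rho)$ is constant in $\phi$, it pulls out of the angular integral, so that $\beta_n^{\,f*g}(\rho) = 2\pi\,\widehat{g}(\rho)\,\beta_n^{\,f}(\rho)$, where $\beta_n^{\,f}$ is the transform of $\widehat{f}$. The recovery identity \eqref{eq:beta_from_alpha} extracts the coefficient of $\psi_{nk}$ by evaluating $\beta_n$ at the single radius $\rho=\lambda_{nk}$, which is precisely where the radial profile $\widehat{g}$ is sampled. Combining this with $\alpha_{nk}=c_{nk}\beta_n^{\,f}(\lambda_{nk})$ from Lemma~\ref{lem:coeff_from_beta}, the coefficient of $\psi_{nk}$ becomes $c_{nk}\,\beta_n^{\,f*g}(\lambda_{nk}) = 2\pi\,\widehat{g}(\lambda_{nk})\,\alpha_{nk}$, so that each coefficient is merely rescaled by the value of $\widehat{g}$ at the corresponding Bessel root. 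This collapse to the single number $\widehat{g}(\lambda_{nk})$ is exactly the claimed diagonal action.

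The step I expect to be the main obstacle is reconciling this recovery computation with the statement about the genuine orthogonal projection $P_\mathcal{I}$, together with tracking the normalization constant. A direct attack, writing the projection coefficient as the $L^2$-inner product $\langle f*g,\psi_{nk}\rangle$ and passing to Fourier space by Plancherel, produces a radial integral of the form $\int_0^\infty(\cdots)\,\widehat{g}(\rho)\,\rho\,d\rho$ against the finite Hankel kernel $\int_0^1 c_{nk}J_n(\lambda_{nk}r)J_n(\rho r)\,r\,dr$ of $\psi_{nk}$, which does not visibly localize to a single sample $\widehat{g}(\lambda_{nk})$. The value of routing through Lemma~\ref{lem:coeff_from_beta} is that its recovery formula already performs this localization by sampling at the Bessel root, where by the Bessel orthogonality relation established in the proof of that lemma (evaluating the angular transform of $\widehat{\psi_{n'k'}}$ at $\rho=\lambda_{nk}$) only the $\psi_{nk}$ component survives. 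The delicate point is therefore to justify that the recovery formula is the relevant notion of coefficient here and to pin down the normalization constant $2\pi$ arising from the convolution theorem, matching it against the stated normalization of $\widehat{g}$. Once this bookkeeping is settled, the result follows directly from Lemmas~\ref{lem:fourier_eigenfun} and~\ref{lem:coeff_from_beta} with no further computation.
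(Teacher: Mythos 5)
Your route is the same as the paper's own proof: convolution theorem, radiality of $\widehat{g}$ to pull it out of the angular integral \eqref{eq:def_beta}, evaluation at $\rho=\lambda_{nk}$, and the recovery identity \eqref{eq:beta_from_alpha} of Lemma \ref{lem:coeff_from_beta}. On the first point you flag, the $2\pi$: you are right. With the paper's convention \eqref{eq:fourier_transform} and its unnormalized convolution, one has $\widehat{f*g}=2\pi\,\widehat{f}\,\widehat{g}$; the paper's proof instead asserts $\widehat{(f*g)}(\rho,\phi)=\widehat{f}(\rho,\phi)\,\widehat{g}(\rho)$ with no $2\pi$, which is inconsistent with its own definitions. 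So your computation, carried to the end, yields the coefficient $2\pi\,\widehat{g}(\lambda_{nk})\,\alpha_{nk}$ and exposes a genuine factor-of-$2\pi$ discrepancy in the lemma as stated; this is not an error on your side.

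The second point you defer, however, is a genuine gap and cannot be settled by bookkeeping --- and it is exactly the step the paper's proof also elides when it says that an application of Lemma \ref{lem:coeff_from_beta} completes the proof. Lemma \ref{lem:coeff_from_beta} applies to functions of the form \eqref{eq:f_in_basis}, hence supported in the closed unit disk; only for such functions does the circle-sampling functional $u\mapsto c_{nk}\,\imath^n\int_0^{2\pi}\widehat{u}(\lambda_{nk},\phi)e^{-\imath n\phi}d\phi$ agree with the projection coefficient $u\mapsto\langle u,\psi_{nk}\rangle$. But $f*g$ is supported on the Minkowski sum of the supports of $f$ and $g$, which is strictly larger than the disk, so neither you nor the paper may invoke the recovery formula for it. Your Plancherel computation shows precisely why: the true coefficient integrates $\widehat{g}$ against the spread-out kernel $\overline{\widehat{\psi}_{nk}}$ rather than sampling it at $\lambda_{nk}$. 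The discrepancy is real, not notational. Take $f=\psi_{01}$ and $g$ the normalized indicator of a disk of radius $\epsilon$; by the mean-value property of Helmholtz solutions, $(\psi_{01}*g)(x)=2\pi\,\widehat{g}(\lambda_{01})\,\psi_{01}(x)+h(x)$ for $x$ in the unit disk, where $h\geq 0$ is supported in the annulus $1-\epsilon<|x|<1$ and strictly positive on part of it (it arises because $\psi_{01}$ is truncated to zero outside the disk while $J_0(\lambda_{01}|x|)$ is negative just outside). Pairing with $\psi_{01}>0$ gives $\langle f*g,\psi_{01}\rangle>2\pi\,\widehat{g}(\lambda_{01})$ strictly. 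So the identity of Lemma \ref{lem:radial_conv} holds only up to boundary-leakage terms of this kind; your instinct to isolate this step as the main obstacle was exactly right, but your proposal (like the paper's proof) does not close it, and as an exact statement it cannot be closed.
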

The proof is a direct application of Lemma \ref{lem:coeff_from_beta}.
\begin{proof}[Proof of Lemma \ref{lem:radial_conv}]
We use the notation $g(x) = g(|x|)$ and $\widehat{g}(\xi) = \widehat{g}(|\xi|)$.
Since the functions $\psi_{n k}$ are an orthonormal basis, in order to compute
the orthogonal projection $P_{\mathcal{I}}$, it  suffices to determine the
coefficients of $f * g$ with respect to $\psi_{n k}$ for $(n,k) \in \mathcal{I}$.
Since
$\widehat{(f * g)}(\rho, \phi) = \widehat{f}(\rho, \phi)\widehat{g}(\rho)
$, and $\widehat{g}$ is radial, we have 
\begin{equation*}
\begin{split}
    \imath^n \int_{0}^{2\pi}\widehat{(f * g)}(\lambda_{nk}, \phi) e^{-\imath n\phi} d\phi &= \imath^n \int_{0}^{2\pi}\widehat{f}(\lambda_{nk}, \phi)  \widehat{g}(\lambda_{nk}) e^{-\imath n\phi} d\phi = \frac{\alpha_{nk}}{c_{nk}} \widehat{g}(\lambda_{nk}),
    \end{split}
\end{equation*}
where the final equality follows from \eqref{eq:beta_from_alpha}. An application of Lemma~\ref{lem:coeff_from_beta} then completes the proof.
\end{proof}
\subsection{Maximum bandlimit} \label{sec:bandlimit}
In this section, we use Weyl's law and lattice point counting estimates to derive a bound on the bandlimit parameter $\lambda$ in terms of the number of pixels $p$ under the assumption that the number of basis functions should not exceed the number of pixels corresponding to points in the unit disk. 

Recall from \eqref{eq:m} that the number of basis functions $m$ is determined from $\lambda$ by
$$
m = \# \{ (n,k) \in \mathbb{Z} \times \mathbb{Z}_{>0} : \lambda_{n k} \le \lambda \},
$$
where $\lambda_{nk}$ is the $k$-th smallest positive root of $J_n$.
Further, recall that  $\lambda_{nk}^2$ are the eigenvalues of the Dirichlet Laplacian on the unit disk, see \eqref{eq:eigenval}. Thus, it follows from Weyl's law that
\begin{equation}\label{eq:weyl}
    \# \{ (n,k) \in \mathbb{Z}\times \mathbb{Z}_{>0} : \lambda_{nk} \leq \lambda \} = \frac{\lambda^2}{4} - \frac{\lambda}{2} + \mathcal{O}(\lambda^{2/3}),
\end{equation}
see \cite{colin2010remainder}. On the other hand, the number of pixels representing points in the unit disk is equal to the number of integer lattice points from $\mathbb{Z}^2$ inside a disk of radius $\lfloor (\sqrt{p}+1)/2 \rfloor$, see \eqref{eq:pixel_locs}. Classic lattice point counting results give
\begin{equation}\label{eq:num_points_circle}
    \# \left\{ (j_1,j_2) \in \mathbb{Z}\times \mathbb{Z} : j_1^2 + j_2^2 \leq  \left\lfloor \frac{\sqrt{p}+1}{2} \right\rfloor ^2 \right\} = \pi \left\lfloor \frac{\sqrt{p}+1}{2} \right\rfloor ^2 + \mathcal{O}(p^{1/3}),
\end{equation}
see for example \cite{gauss_2011,hardy1924lattice,van1923neue}. Equating~\eqref{eq:weyl} with \eqref{eq:num_points_circle} results in
\begin{equation}\label{eq:bandlimit}
    \lambda = 2\sqrt{\pi} \left\lfloor \frac{\sqrt{p}+1}{2} \right\rfloor + 1 + \mathcal{O}\left(p^{-1/6}\right).
\end{equation}
For simplicity, motivated by \eqref{eq:bandlimit}, we assume 
\begin{equation} \label{eq:bandlimit_simple}
\lambda \le \sqrt{\pi p}.
\end{equation}
Practically speaking, it can be advantageous to expand the image using fewer basis functions than described by \eqref{eq:bandlimit}. See, for example, the heuristic described by Remark \ref{rmk:bandlimit_heuristic} or see \cite{MR3472531}. 

\section{Computational method} \label{sec:method}
In this section, we describe how to apply the
operators $B$ and $B^*$ defined above in \S \ref{sec:main_result} in $\mathcal{O}(p \log p)$ 
operations. For the purpose of exposition,
we start by describing a simplified method before presenting the full method. The
section is organized as follows:
\begin{itemize}
\item In \S \ref{sec:algo_notation}, we introduce notation for the algorithm description.
\item  In \S \ref{sec:simplified}, we give an informal description of a simplified method to apply
$B$ and $B^*$ in $\mathcal{O}(p^{3/2} \log p)$ operations. The simplified method is a
direct application of the lemmas from the previous section.
\item In \S \ref{sec:fast_summary}, we provide an informal description of how to
modify the simplified method to create a fast method to apply $B$ and $B^*$ in
$\mathcal{O}(p \log p)$ operations. The main additional ingredient is a fast method of
interpolation from Chebyshev nodes.
\item In \S \ref{sec:fast_details}, we give a detailed description of the fast 
method to apply $B$ and $B^*$ in $\mathcal{O}(p \log p)$ operations.
\end{itemize}
%
\subsection{Notation} \label{sec:algo_notation}
Recall that $x_1,\ldots,x_p$ and $f_1,\ldots,f_p$ are an enumeration of the pixel locations and corresponding pixel values, and $\lambda_1,\ldots,\lambda_m$ and $\psi_1,\ldots,\psi_m$ are an enumeration of the Bessel function roots and corresponding eigenfunctions, see \S \ref{sec:notation}. Let
$c_1,\ldots,c_m$
be an enumeration of the normalization constants defined in \eqref{eq:eigenfun_const} such that $c_j$ is the normalization constant associated with $\psi_j$, and let
$n_1,\ldots,n_m$ and $k_1,\ldots,k_m,$
be an enumeration of the Bessel function orders and root numbers such that $\psi_{n_j k_j} = \psi_j$.
Further, we define
$N_m = \max \{ n_j \in \mathbb{Z} : j \in \{1,\ldots,p\}\}$ to be the maximum order of the Bessel functions,  and 
$K_n := \max \{ k \in \mathbb{Z}_{>0} :\lambda_{nk} \le \lambda \}$ for $n \in \{-N_m, \ldots , N_m\}$.

A key ingredient in the simplified and fast methods is the  non-uniform fast Fourier transform (NUFFT) \cite{Dutt1993,Greengard2004,lee2005type}, which is a now standard tool in computational mathematics. Given $n$ source points and $m$ target points in $\mathbb{R}^d$, and $1 \ge \varepsilon > 0$, the NUFFT involves
\begin{equation} \label{eq:nufft}
\textstyle
\mathcal{O}\left(n \log n + m \left( \log \frac{1}{\varepsilon}\right)^d\right)
\end{equation}
operations to achieve $\ell^1$-$\ell^\infty$ relative error $\varepsilon$, see \cite[Eq. (9)]{barnett2021aliasing}.  Throughout the paper (except for
Theorem \ref{thm1} and its proof), we treat $\varepsilon$ as a fixed constant, say, $\varepsilon=10^{-7}$, and do not include it in computational complexity statements. 

\subsection{Informal description of simplified method} \label{sec:simplified}
In this section,  we present a simplified  
method that applies $B$ and $B^*$ in $\mathcal{O}(p^{3/2}
\log p )$ operations.  We first describe how to apply $B^*$.
The basic idea is to apply Lemma~\ref{lem:coeff_from_beta} to the function
$$
f(x) = \sum_{i=1}^p f_j \delta(x - x_j),
$$
where $\delta$ is a Dirac delta distribution. Observe that, by our convention \eqref{eq:fourier_transform}, the Fourier transform of $f$ is  
$$
\widehat{f}(\xi) = \frac{1}{2\pi} \sum_{j=1}^p  f_j e^{-i x_j \cdot \xi}.
$$
In polar coordinates $x_j = (r_j \cos \theta_j, r_j \sin \theta_j)$ and $\xi = (\rho \cos \phi, \rho \sin \phi)$ 
$$
\widehat{f}(\rho, \phi) = 
\frac{1}{2\pi} \sum_{j=1}^p  f_j e^{-i r_j \rho \cos(\theta_j - \phi)},
$$
and by the definition \eqref{eq:def_beta} of $\beta_n$ we have
\begin{equation} \label{eq:int_over_phi}
\beta_n(\rho) = \sum_{j=1}^p f_j \frac{i^n}{2\pi} \int_0^{2\pi} e^{-\imath r_j \rho \cos(\theta_j - \phi)} e^{-i n \phi} d\phi.
\end{equation}
Changing variables $\phi \mapsto \phi + \theta_j +\pi/2$ and using the identity \eqref{eq:jn_as_fourier_coeff} gives
$$
\beta_n(\rho) = \sum_{j=1}^p f_j J_n(r_j \rho) e^{-i n \theta_j}.
$$
By the definition \eqref{eq:operator_B^*} of $B^*$ it follows that
$$
(B^* f)_i = c_i \beta_{n_i}(\lambda_i) h.
$$
In order to implement the above calculations numerically, we need to discretize the integral in \eqref{eq:int_over_phi}.
In Lemma \ref{lem:num_angular_nodes}, we prove that discretizing $\phi$ using $s = \mathcal{O}(\sqrt{p})$ equispaced angles guarantees that sums over the equispaced angles approximate integrals over $\phi$ to sufficient accuracy.
In more detail, the simplified method for applying $B^*$ can be described as follows: 
\begin{enumerate}[label={\textbf{Step \arabic*.}}, parsep=2ex,wide=0pt]
\item Using the type-2 2-D NUFFT compute:
$$
a_{i \ell} := \sum_{j=1}^p f_j e^{-\imath x_j \cdot \xi_{i \ell}}
\quad \text{where} \quad
\xi_{i \ell} := \lambda_i (\cos \phi_\ell, \sin \phi_\ell),
$$
for $(i,\ell) \in \{1,\ldots,m\} \times \{0,\ldots,s-1\}$, where $\phi_\ell = 2\pi \ell/s$.
The computational complexity of this step is $\mathcal{O}(p^{3/2})$ using the NUFFT
since there are $\mathcal{O}(p)$ source nodes, and $\mathcal{O}(p^{3/2})$ target nodes, see \eqref{eq:nufft}.
\item  Using the FFT compute:
$$
\beta_{n}(\lambda_{i}) \approx \frac{\imath^n}{ s}\sum_{\ell=0}^{s-1} a_{i \ell} e^{-\imath n \phi_\ell }
$$
for $(i,n) \in \{1,\ldots,m\} \times \{0,\ldots,{s}-1\}$. 
Since this step involves $m = \mathcal{O}(p)$ FFTs of size $s = \mathcal{O}(\sqrt{p})$, the computational complexity of this step is $\mathcal{O}(p^{3/2} \log p)$. 

\item By Lemma \ref{lem:coeff_from_beta}, it follows that
$$
(B^* f)_i = \beta_{n_i}(\lambda_i) c_i h,
$$
for $i \in \{1,\ldots,m\}$. The computational complexity of this step is $\mathcal{O}(p)$ since it only involves selecting and scaling $\beta_{n_i}(\lambda_i)$.
\end{enumerate}
\subsection{Sketch of fast method} \label{sec:fast_summary}
In this section, we describe how the computational complexity of the simplified method of the previous section for applying $B^*$ can be
improved from $\mathcal{O}(p^{3/2} \log p)$ to $\mathcal{O}(p \log p)$ by using
fast interpolation from Chebyshev nodes.  

The problem with the simplified method is the first step: it involves computing 
$\widehat{f}(\rho,\phi)$ for 
$\mathcal{O}(p)$ values of $\rho$ and $\mathcal{O}(\sqrt{p})$ values of $\phi$ for a total of $\mathcal{O}(p^{3/2})$ points, which is already prohibitively expensive.
Fortunately, there is a simple potential solution to this problem: since the functions $\beta_n(\rho)$ are analytic functions of $\rho$, it might be possible to tabulate them at appropriate points and then use polynomial interpolation to compute the coefficients. We take this approach to design a fast method. Crucially, we prove that tabulating each $\beta_n$ at $\mathcal{O}(\sqrt{p})$ Chebyshev nodes is sufficient to achieve the desired accuracy (see Lemma \ref{lem:num_radial_nodes}).  This reduces the number of target points in the NUFFT in the first step of the algorithm from $\mathcal{O}(p^{3/2})$ to $\mathcal{O}(p)$. Note that one should not expect to be able to use $o(p)$ points in total, since the images have $p$ pixels.

In more detail, here is an informal summary of
the fast method:
\begin{itemize}
\item Compute the Fourier transform of $f$ at 
$$
\xi_{ k \ell} := t_k (\cos \phi_\ell, \sin \phi_\ell),
$$
for $\mathcal{O}(\sqrt{p})$ Chebyshev nodes $t_k$ and $\mathcal{O}(\sqrt{p})$ angles $\phi_\ell$.
\item Approximate $\beta_n(t_k)$
for the $\mathcal{O}(\sqrt{p})$ Chebyshev nodes $t_k$ for the interval $[\lambda_1,\lambda_m]$ and $\mathcal{O}(\sqrt{p})$ frequencies $n$.
\item For each of the $\mathcal{O}(\sqrt{p})$ frequencies, use fast
interpolation from the $\mathcal{O}(\sqrt{p})$ Chebyshev nodes $t_k$ to the
$\mathcal{O}(\sqrt{p})$ Bessel function roots associated with each frequency $n$.
We illustrate the interpolation step in Fig. \ref{fig:02}.
\end{itemize}
\begin{figure}[h!]
\centering
\begin{tabular}{c}
\includegraphics[width=.7\textwidth]{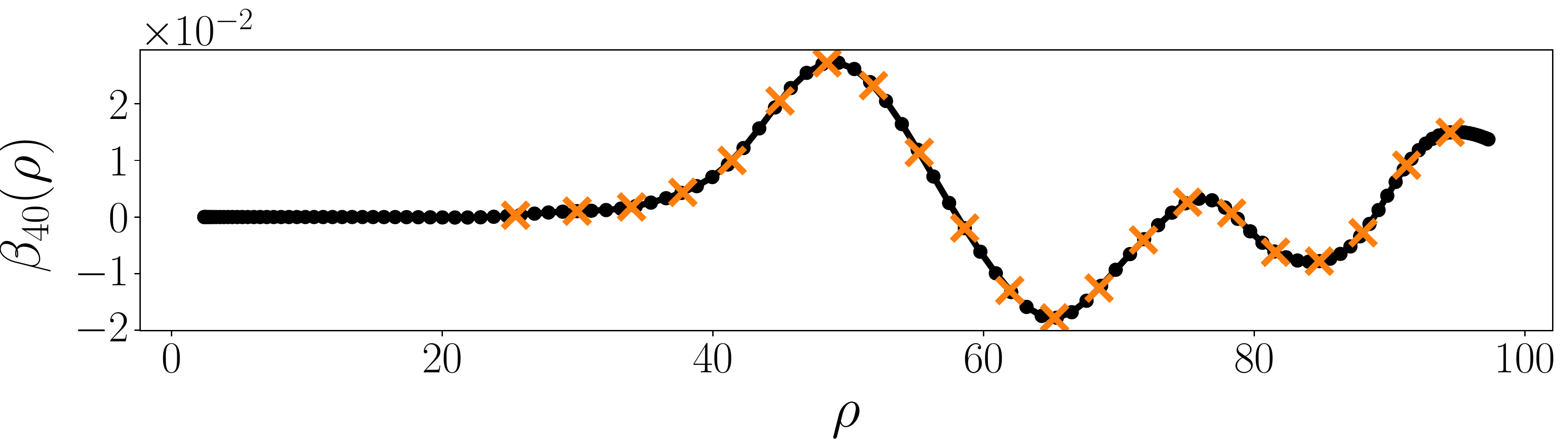} \\
\includegraphics[width=.7\textwidth]{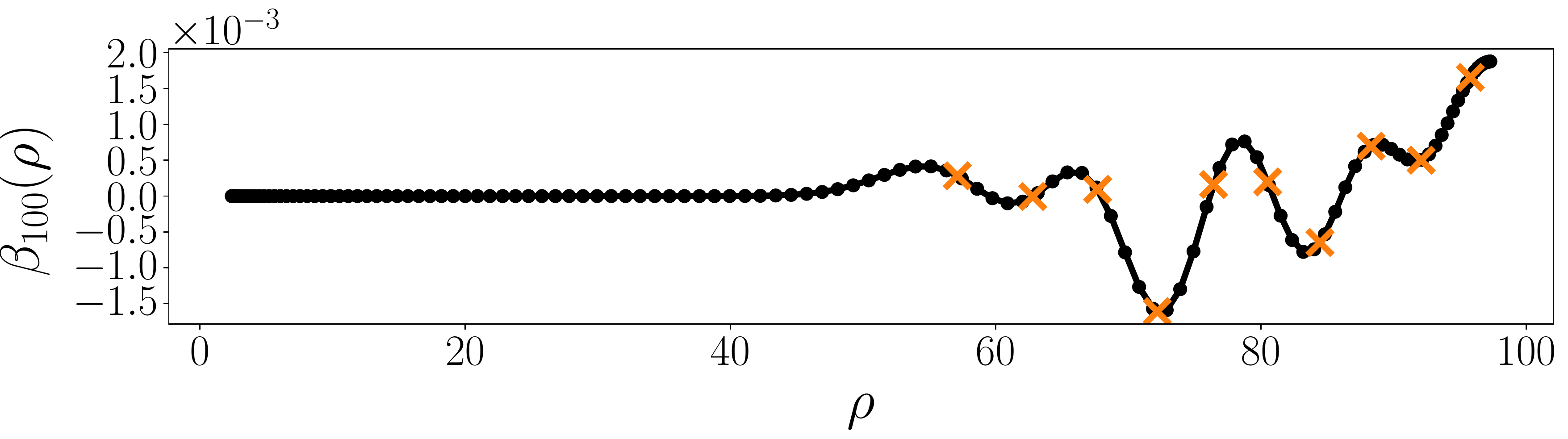}
\end{tabular}
\caption{We visualize the interpolation
step for a $64 \times 64$ input image. 
For $n=40$ and $100$  we plot
$\beta_n(\rho)$ (black line), interpolation source nodes (black dots), and target points (orange crosses).}  \label{fig:02}
\end{figure}

\subsection{Detailed description of fast method} \label{sec:fast_details}
In addition to the notation of
\S
\ref{sec:algo_notation}, let
\begin{equation}\label{eq:def_chebyshev}
    t_k := \frac{\lambda_m - \lambda_1}{2}\cos\left(\frac{2k+1}{q}\cdot \frac{\pi}{2}\right) + \frac{\lambda_1 + \lambda_m}{2}, \quad k = 0, \ldots , q-1,
\end{equation}
be Chebyshev nodes of the first kind in the interval $[\lambda_1, \lambda_m]$, for fixed integer $q$. We present a detailed
 description of the fast method for applying $B^*$ in Algorithm \ref{algo}.

\begin{algorithm}[h!] \label{algo} 
\caption{Fast method for applying $B^*$.}
\KwIn{Image $f$, bandlimit $\lambda$, and accuracy parameter $\varepsilon$.}
\Constants{$\#$ of pixels $p$, $\#$ of basis functions $m$, 
$\varepsilon^\text{dis}$ defined by \eqref{eq:epsdis},
$\varepsilon^\text{nuf}$ and $\varepsilon^\text{fst}$ defined by \eqref{eq:nufst},
$
\textstyle
s = \lceil \max\{ 7.09 \sqrt{p} , |\log_2 \varepsilon^\text{dis}| \} \rceil,\,$ and
$q = \lceil \max\left\{ 2.4 \sqrt{p} , |\log_2 \varepsilon^\text{dis}| \right\} \rceil.
$
}
\KwOut{$\alpha$ approximating $B^* f$ to relative error $\varepsilon$ (see Theorem \ref{thm1}).}
Using NUFFT, compute:
$$
a_{k \ell} := \sum_{j=1}^p f_j e^{-\imath x_j \cdot \xi_{k \ell}},
$$
with relative error $\varepsilon^\text{nuf}$, where
$\xi_{k \ell} := t_k (\cos \phi_\ell, \sin \phi_\ell)$ and $\phi_\ell = 2\pi \ell/s$,
for $k \in \{0,\ldots,q-1\}$ and $\ell \in \{0,\ldots,s-1\}$.
\\
Using FFT, compute: 
$$
\beta_{n k} := \frac{ \imath^n}{ s}\sum_{\ell=0}^{s-1} a_{k \ell} e^{-\imath n \phi_\ell},
$$
for $k \in \{0,\ldots,q-1\}$ and $n \in \{-N_m,\ldots,N_m\}$.
\\
 Using fast Chebyshev interpolation, compute:
$$
\alpha_i := 
\sum_{k=0}^{q-1} c_i \beta_{n_i k} u_k(\lambda_i)  h,
$$
for $i \in \{1,\ldots,m\}$ with relative error $\varepsilon^\text{fst}$ for $k \in \{0,\ldots,q-1\}$ and $n \in \{-N_m,\ldots,N_m\}$,  where $u_k(t)$ is defined in \eqref{eq:def_inter_poly}.
\\[5pt]
\end{algorithm}
 
 \begin{remark}[Methods for fast interpolation from Chebyshev nodes] \label{rmk:fast_interp}
Given values $v_0,\ldots,v_{q-1}$, we denote by $P$ the $q-1$ degree polynomial such that $P(t_k) = v_k$ for $k \in \{0,\ldots,q-1\}$, where $t_k$ are the Chebyshev nodes defined in
\eqref{eq:def_chebyshev}. We can explicitly write $P$ as
\begin{equation}\label{eq:def_inter_poly}
P(t) = \sum_{k=0}^{q-1} v_k u_k(t),
\quad \text{where} \quad
u_k(t) = \frac{\prod_{\ell \neq k} (t-t_\ell)}{\prod_{\ell \neq k} (t_k-t_\ell)}, \,\, \text{for $k \in \{0,\ldots,q-1\}$}.
\end{equation}
 Given $r$ target points $w_0,\ldots,w_{r-1}$, the map $(v_0,\ldots,v_{q-1})\mapsto(P(w_0),\ldots,P(w_{r-1}))$ is a linear mapping $\mathbb{C}^q \rightarrow \mathbb{C}^{r}$.
This linear operator (and its adjoint) can be applied fast by a variety of methods: for example, the interpolation could be performed by using the NUFFT \cite{Dutt1993,Greengard2004,lee2005type} in $\mathcal{O}(p \log p)$ operations (which is often called spectral interpolation), the Fast Multipole Method (FMM) \cite{dutt1996} in $\mathcal{O}(p)$ operations, or generalized Gaussian quadrature \cite{gimbutas2020fast} in $\mathcal{O}( p \log p )$ operations. Although the FMM has the lowest computational complexity, it is known to have a large run-time constant, so using other methods may be faster in applications. Practically speaking,  
choosing a fixed number of
source points centered around each target point (say $20$ source
points) and then applying a precomputed sparse (barycentric interpolation \cite{berrut2004barycentric})
matrix may be more practical than any of these methods; sparse interpolation can be used in combination with spectral interpolation  (by discrete cosine transform) to first increase the number of Chebyshev nodes.
\end{remark}
\begin{algorithm}[h!] \label{algoB} 
\caption{Fast method for applying $B$.}
\KwIn{Coefficients $\alpha$, bandlimit $\lambda$, and accuracy parameter $\varepsilon$.}
\Constants{$\#$ of pixels $p$, $\#$ of basis functions $m$, $\varepsilon^\text{dis}$ defined by \eqref{eq:epsdis2},
$\varepsilon^\text{fst}$ and
$\varepsilon^\text{nuf}$ defined by \eqref{eq:error_nufft_B_proof},
$
\textstyle
s = \lceil \max\{ 7.09 \sqrt{p} , |\log_2 \varepsilon^\text{dis}| \} \rceil,\,$ and $q = \lceil \max\left\{ 2.4 \sqrt{p} , |\log_2 \varepsilon^\text{dis}| \right\} \rceil.
$}

\KwOut{$f$ approximating $B \alpha$ to  relative error $\varepsilon$ (see Theorem \ref{thm1})}
Using a fast Chebyshev interpolation method, compute
$$
\beta^*_{nk} = h \sum_{i : n_i = n}  u_k(\lambda_i) c_i \alpha_i,
$$
with relative error less than $\varepsilon^\text{fst}$ 
for $n \in \{-N_m,\ldots,N_m\}$ and $k \in \{0,\ldots,q-1\}$, where $u_k(t)$ is defined in \eqref{eq:def_inter_poly}.

Using FFT compute
$$
a^*_{k \ell} := \sum_{n=-N_m}^{N_m} \frac{ (-\imath)^n}{ s}
e^{\imath n \phi_\ell}
\beta^*_{nk},
$$
for all $k \in \{0,\ldots,q-1\}$ and $\ell \in \{0,\ldots,s-1\}$, where
$\phi_\ell = 2\pi \ell/s$.

Using NUFFT compute
$$
f_j = \sum_{k=0}^{q-1} \sum_{\ell =0}^{s-1}  e^{\imath x_j \cdot \xi_{k \ell}} a_{i \ell}^*,
$$
with relative error less than $\varepsilon^\text{nuf}$, for $j \in \{1,\ldots,p\}$,
where $\xi_{k \ell} := t_k (\cos \phi_\ell, \sin \phi_\ell)$.
\\[5pt]
\end{algorithm}
Algorithm~\ref{algoB} details the fast method for applying $B$, which consists of applying the adjoint of the operator applied in each step of Algorithm \ref{algo} in reverse order, with slightly different accuracy parameters. Indeed, each step of Algorithm  \ref{algo} consists of applying a linear transform
whose adjoint can be applied in a similar number of operations: the adjoint of the first step (which uses type-2 NUFFT) is a type-1 2-D NUFFT 
\cite{barnett2021aliasing}, the adjoint of the second step (which uses a standard FFT) is an inverse FFT, the adjoint of the third step (fast interpolation, see Remark \ref{rmk:fast_interp}) can be computed by a variety of methods (including NUFFT).

\section{Accuracy guarantees for the fast methods}
\label{sec:proofmainresult}
We state and prove a precise version of the informal result in Theorem \ref{thm1informal}.

\begin{theorem}\label{thm1}
Let $1 \ge \varepsilon > 0$ be given, assume $\lambda \le \sqrt{\pi p}$ and $|\log \varepsilon| \leq \sqrt{p}$. Let $\tilde{B}^*$ and $\tilde{B}$ be operators whose actions consist of applying Algorithm \ref{algo} and Algorithm~\ref{algoB}, respectively. We have
$$
\|\tilde{B}^* f  -  B^* f \|_{\ell^\infty}  \le \varepsilon \|f\|_{\ell^1},
\quad \text{and} \quad
\|\tilde{B} \alpha - B \alpha\|_{\ell^\infty} \le \varepsilon \|\alpha\|_{\ell^1}.
$$
Moreover, both algorithms involve $\mathcal{O}( p \log p + p|\log\varepsilon|^2)$ operations.
\end{theorem}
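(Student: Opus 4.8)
The plan is to establish the two error inequalities separately and then read the complexity off the cost of the three steps of each algorithm. Because Algorithm \ref{algoB} is assembled as the adjoint of Algorithm \ref{algo} with the three linear maps applied in reverse order, I would prove the bound for $\tilde{B}^*$ in full and deduce the bound for $\tilde{B}$ by applying the identical estimates to the transposed maps; the error operator $E = \tilde{B}^* - B^*$ satisfies $\|Ef\|_{\ell^\infty} \le \varepsilon\|f\|_{\ell^1}$ exactly when its adjoint satisfies the transposed bound, so the second inequality requires no genuinely new estimate.

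For $\tilde{B}^*$, the exact coefficient is $\alpha_i = c_i \beta_{n_i}(\lambda_i) h$ by \eqref{eq:beta_from_alpha}, whereas Algorithm \ref{algo} returns $\tilde{\alpha}_i = h\sum_k c_i \tilde{\beta}_{n_i k} u_k(\lambda_i)$, with $\tilde{\beta}_{nk}$ the NUFFT-then-FFT approximation of $\beta_n(t_k)$. The core of the argument is to insert intermediate quantities and split $\beta_{n_i}(\lambda_i) - \sum_k \tilde{\beta}_{n_i k} u_k(\lambda_i)$ into four pieces: (i) the Chebyshev truncation error, i.e. the gap between $\beta_{n_i}(\lambda_i)$ and the value at $\lambda_i$ of the degree $q-1$ interpolant built from the exact values $\beta_{n_i}(t_k)$; (ii) the angular aliasing error from replacing the $\phi$-integral in \eqref{eq:def_beta} by the $s$-point equispaced sum at each node $t_k$; (iii) the NUFFT error at tolerance $\varepsilon^{\text{nuf}}$; and (iv) the error $\varepsilon^{\text{fst}}$ of applying the interpolation map $v \mapsto \big(\sum_k v_k u_k(\lambda_i)\big)_i$ fast rather than exactly. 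Since $|J_n|\le 1$ gives $|\beta_n(\rho)| \le \|f\|_{\ell^1}$, each piece is naturally controlled in the $\ell^1$-$\ell^\infty$ sense, which is precisely why the NUFFT's native relative error \eqref{eq:nufft} chains cleanly. I would bound each of the four pieces by a fixed fraction of $\varepsilon$; the budgets $\varepsilon^{\text{dis}}$, $\varepsilon^{\text{nuf}}$, $\varepsilon^{\text{fst}}$ in \eqref{eq:epsdis} and \eqref{eq:nufst} are chosen for exactly this purpose, and the prefactors $c_i h$ are absorbed into them. Pieces (ii) and (iii) are per-node errors that must be pushed through the interpolation, so I would pick up the Lebesgue constant $\max_{t\in[\lambda_1,\lambda_m]}\sum_k |u_k(t)| = \mathcal{O}(\log q) = \mathcal{O}(\log p)$, harmless after being absorbed into the budget.

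The two genuinely analytic inputs are Lemma \ref{lem:num_angular_nodes} and Lemma \ref{lem:num_radial_nodes}, controlling pieces (ii) and (i). For (ii), the closed form $\beta_n(\rho) = \sum_j f_j J_n(r_j\rho) e^{-\imath n\theta_j}$ makes the $s$-point trapezoidal rule exact on all angular modes of order below $s$, so the aliasing error is governed by tail Bessel values $J_{n'}(r_j\rho)$ with $|n'|\ge s$; since $r_j\le 1$ and $\rho\le\lambda_m\le\sqrt{\pi p}$, the rapid decay of $J_{n'}(x)$ for $|n'|\gg x$ forces this tail below $\varepsilon^{\text{dis}}\|f\|_{\ell^1}$ once $s\ge 7.09\sqrt{p}$. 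For (i), the same closed form shows $\beta_n$ extends to an entire function whose only $\rho$-dependence is through $J_n(r_j\rho)$ with $r_j\le 1$, hence it is effectively band-limited with band at most $1$; on the interval $[\lambda_1,\lambda_m]$ of length $\mathcal{O}(\sqrt{p})$ its Chebyshev interpolation error therefore decays geometrically once $q$ passes a constant multiple of the band times the interval length, which is ensured by $q\ge 2.4\sqrt{p}$. I expect these two estimates, and in particular making the constants $7.09$ and $2.4$ rigorous \emph{uniformly in $n$}, to be the main obstacle, since they demand quantitative Bessel-tail and Chebyshev-interpolation bounds rather than soft arguments.

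Finally, the assumption $|\log\varepsilon|\le\sqrt{p}$ together with the definitions of $s$ and $q$ forces $s,q = \mathcal{O}(\sqrt{p})$, so the NUFFT in the first step has $n = p$ sources and $qs = \mathcal{O}(p)$ targets in dimension $d=2$; substituting into \eqref{eq:nufft} gives $\mathcal{O}(p\log p + p|\log\varepsilon|^2)$. The FFT step consists of $q = \mathcal{O}(\sqrt{p})$ transforms of length $s = \mathcal{O}(\sqrt{p})$, costing $\mathcal{O}(p\log p)$, and the interpolation step (Remark \ref{rmk:fast_interp}) maps $q = \mathcal{O}(\sqrt{p})$ nodes to $m = \mathcal{O}(p)$ targets in $\mathcal{O}(p\log p)$ operations. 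Summing yields the stated $\mathcal{O}(p\log p + p|\log\varepsilon|^2)$ bound, and the identical count applies to Algorithm \ref{algoB} since each step is replaced by an adjoint of comparable cost.
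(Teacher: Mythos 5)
Your error decomposition is essentially the paper's: the paper also composes the three steps, splits the error into a discretization part (Lemma \ref{lem:dis}, which merges your pieces (i) and (ii)), a NUFFT part, and a fast-interpolation part, pushes the per-node errors through the interpolation at the cost of the Lebesgue constant bound of Lemma \ref{lemma:chebyshev_perturbation}, and then verifies the budgets \eqref{eq:nufst}, \eqref{eq:epsdis}; your complexity count is also the same as the paper's. Where you genuinely diverge is in how you would prove the two key lemmas. For Lemma \ref{lem:num_angular_nodes} the paper does not use aliasing/Jacobi--Anger: it bounds the $s$-th $\phi$-derivative of the integrand by $(\lambda_m+N_m)^s$ and invokes the derivative-form trapezoidal estimate of Lemma \ref{newlemma}, which gives the constant $7.09$ in a few lines with no Bessel-tail asymptotics. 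For Lemma \ref{lem:num_radial_nodes} the paper likewise avoids analyticity arguments: differentiating the representation \eqref{eq:jn_as_fourier_coeff} gives $|\partial_\rho^q J_n(r_j\rho)|\le 1$ uniformly in $n$ and $j$, and the classical Chebyshev remainder $\frac{C_q}{q!}\left(\frac{b-a}{4}\right)^q$ plus Stirling yields $2.4\sqrt{p}$. Your aliasing and ``band-limited'' arguments can be made rigorous (note the aliased orders are $\ge s-N_m$, not $\ge s$, since the modes are shifted by $n$), but they require quantitative Bessel-tail bounds, respectively a Bernstein-ellipse estimate with explicit constants; the uniformity in $n$ that you single out as the main obstacle is exactly what the paper's derivative bounds deliver for free.

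Two points in your write-up are genuine gaps. First, saying the prefactors $c_ih$ are ``absorbed into'' the budgets is not automatic: by \eqref{eq:eigenfun_const}, $c_i$ grows with $p$ (for instance $c_{0k}\sim(\lambda_{0k}/2)^{1/2}$, and roots near the turning point give even larger constants), and every one of your four error pieces is multiplied by $c_ih$. Since the budgets \eqref{eq:nufst} and \eqref{eq:epsdis} contain no $p$-dependent slack, your verification closes only if $c_ih=\mathcal{O}(1)$, which is a real claim; the paper devotes Lemma \ref{lem:cj} to proving $c_{nk}\le\sqrt{2p}$ (hence $c_ih\le 2\sqrt{2}$) via lower bounds on $|J_n'(\lambda_{nk})|$ using the magnitude--phase decomposition of Bessel functions. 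Second, your duality shortcut for $\tilde{B}$ presumes $\tilde{B}=(\tilde{B}^*)^*$. The $\ell^1$-$\ell^\infty$ operator norm does transfer to adjoints (it is the maximum-entry norm), so the idea is sound for the exact adjoint composition; but Algorithm \ref{algoB} as specified is not literally the adjoint of Algorithm \ref{algo}: its tolerances \eqref{eq:error_nufft_B_proof} differ from \eqref{eq:nufst}, and a type-1 NUFFT call at a given tolerance is its own approximation, not the transpose of the error matrix realized by the type-2 call. To prove the theorem as stated about Algorithm \ref{algoB}, one must either redefine the algorithm as the exact adjoint composition or redo the (structurally parallel) estimate, which is what the paper does.
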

The proof of Theorem \ref{thm1} is given in Appendix~\ref{sec:proof_main_result}. 
We note that the theorem quantifies the computational accuracy in terms of $\ell^1$-$\ell^\infty$ relative error, which is standard for algorithms involving the NUFFT \cite{barnett2021aliasing,
barnett2019parallel}.
The assumption $|\log \varepsilon| \le \sqrt{p}$ is not restrictive since if $|\log \varepsilon| \ge \sqrt{p}$, then we could directly evaluate $B^* f$ in the same asymptotic complexity $\mathcal{O}(p^2)$.
The proof of Theorem \ref{thm1} relies on the following two key lemmas that estimate a sufficient number of angular nodes and radial nodes in \S \ref{sec:num_angular} and \S \ref{sec:num_nodes}, respectively.

\subsection{Number of angular nodes}\label{sec:num_angular}
Informally speaking, the following lemma shows that $s = \mathcal{O}(\sqrt{p})$ angular nodes are sufficient to achieve error $\gamma$
in the discretization of the integral over $\phi$, see 
\eqref{eq:int_over_phi}.

\begin{lemma} \label{lem:num_angular_nodes}
Let the number of equispaced angular nodes $s$ satisfy
\begin{equation}
 \label{eq:num_angular_nodes}
 \textstyle
s = \lceil \max\{ 7.09 \sqrt{p} , \log_2 \gamma^{-1} \} \rceil.
\end{equation}
Let $x_j = (r_j \cos \theta_j, r_j \sin \theta_j)$. If $\rho \in [\lambda_1,\lambda_m]$, then
$$
\left| \frac{\imath^n}{s} \sum_{\ell=0}^{s-1}  e^{\imath r_j \rho \cos(\theta_j - \phi_\ell)} e^{-\imath n \phi_\ell} - J_n(r_j \rho) e^{-i n \theta_j} \right| \leq \gamma,
$$
for $n \in \{-N_m,\ldots,N_m\}$, and $j \in \{1,\ldots,p\}$, where $\phi_\ell = 2\pi \ell/s$.
\end{lemma}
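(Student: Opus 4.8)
The plan is to recognize the left-hand side as the $s$-point equispaced (periodic trapezoidal) quadrature applied to the $2\pi$-periodic integrand
$g(\phi) = e^{-\imath r_j\rho\cos(\theta_j - \phi)}e^{-\imath n\phi}$, whose exact normalized integral over $[0,2\pi]$ equals $J_n(r_j\rho)e^{-\imath n\theta_j}$ after multiplication by $\imath^n$, by the Bessel integral representation \eqref{eq:jn_as_fourier_coeff}; this is precisely the continuous computation carried out in \S\ref{sec:simplified}. Thus the quantity to be bounded is exactly the quadrature error of the periodic trapezoidal rule, and the natural tool is aliasing.

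First I would expand $g$ in a Fourier series. By the Jacobi--Anger expansion $e^{-\imath r_j\rho\cos(\theta_j-\phi)} = \sum_{k\in\mathbb{Z}}(-\imath)^k J_k(r_j\rho)e^{\imath k(\theta_j-\phi)}$, so the Fourier coefficient of $g$ at frequency $m$ is a unit-modulus phase times $J_{m+n}(r_j\rho)$; equivalently these coefficients are read off directly from \eqref{eq:jn_as_fourier_coeff}. The exact integral isolates the coefficient of order $n$. The standard aliasing identity for equispaced nodes --- namely $\frac1s\sum_{\ell=0}^{s-1}e^{\imath m\phi_\ell}$ equals $1$ when $s \mid m$ and $0$ otherwise --- shows that the $s$-point sum equals the sum of all Fourier coefficients whose frequency is congruent to $-n$ modulo $s$. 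Subtracting the exact value, the error is the sum over $k\neq0$ of phases times $J_{ks+n}(r_j\rho)$, so
$$
\Bigl| \tfrac{\imath^n}{s}\sum_{\ell=0}^{s-1} e^{-\imath r_j\rho\cos(\theta_j-\phi_\ell)}e^{-\imath n\phi_\ell} - J_n(r_j\rho)e^{-\imath n\theta_j}\Bigr| \le \sum_{k\neq 0}\bigl|J_{ks+n}(r_j\rho)\bigr|.
$$

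It then remains to bound this tail. Every order appearing satisfies $|ks+n|\ge s-N_m$, while the argument obeys $r_j\rho\le \sqrt2\,\lambda_m\le\sqrt{2\pi p}$ (using $r_j\le\sqrt2$ for pixels in $[-1,1]^2$, $\rho\le\lambda_m$, and the bandlimit bound \eqref{eq:bandlimit_simple}), and $N_m=\mathcal{O}(\sqrt p)$ since the first positive root of $J_n$ exceeds $n$ so that any order present below $\lambda$ satisfies $n<\lambda\le\sqrt{\pi p}$. I would then invoke a standard large-order decay estimate for Bessel functions --- for instance $|J_\nu(z)|\le (z/2)^\nu/\nu!\le(ez/2\nu)^\nu$ via Stirling --- which is geometric in $\nu$ with ratio below $\tfrac12$ once $\nu$ exceeds a fixed multiple of the argument. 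The hypothesis $s\ge 7.09\sqrt p$ is precisely what forces the smallest relevant order $s-N_m$ to sit far enough above the argument that the leading term is bounded by a constant times $2^{-s}$ and the remaining terms sum to a convergent geometric series; the second hypothesis $s\ge\log_2\gamma^{-1}$ then converts the $2^{-s}$ bound into the claimed bound $\gamma$.

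The main obstacle is the explicit constant bookkeeping in this last step: one must simultaneously verify geometric decay (ratio at most $\tfrac12$, so that the tail is controlled by its first term) and that this first term lies below $2^{-s}$, while tracking the exact dependence of $\max_j r_j$, $\lambda_m$, and $N_m$ on $\sqrt p$. The constant $7.09$ is tuned so that, after accounting for the offset $N_m$ and the geometric-sum prefactor, the borderline rate $e\sqrt{2\pi}\approx 6.81$ governing the onset of decay is comfortably exceeded; once the decay is established the remainder of the argument is a routine geometric-series estimate.
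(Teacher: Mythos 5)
Your approach is genuinely different from the paper's, and in principle sharper. You represent the equispaced-sum error \emph{exactly} through aliasing: by Jacobi--Anger the Fourier coefficients of the integrand are Bessel functions of increasing order, the $s$-point sum retains precisely the coefficients whose frequency is divisible by $s$, and the error is therefore the aliased tail $\sum_{k \neq 0} |J_{ks \pm n}(r_j\rho)|$, to be controlled by the large-order decay $|J_\nu(z)| \le (z/2)^\nu/\nu! \le (ez/(2\nu))^\nu$. The paper instead applies the generic periodic-quadrature estimate of Lemma \ref{newlemma} (error at most $4\|g^{(s)}\|_{L^1}/s^s$) together with the derivative bound $|g^{(s)}_{nj}| \le (\lambda_m+N_m)^s/(2\pi)$, and then chooses $s$ so that $(\lambda_m+N_m)/s \le 1/2$. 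Up to your last step, your argument is correct.

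The gap is the final bookkeeping step, which you deferred, and it is not cosmetic: with the stated constants it fails. The smallest aliased order is $\nu_0 = s - N_m$, not $s$. With $s = \lceil 7.09\sqrt{p}\rceil$ and $N_m \le \sqrt{\pi p} \approx 1.77\sqrt{p}$ you only get $\nu_0 \ge 5.32\sqrt{p}$, while your own bound on the argument is $z \le \sqrt{2}\,\lambda_m \le \sqrt{2\pi p} \approx 2.51\sqrt{p}$; hence the decay ratio $\theta = ez/(2\nu_0)$ can be as large as $6.81/10.64 \approx 0.64 > 1/2$. The comparison you make, $7.09$ against $e\sqrt{2\pi} \approx 6.81$, is the wrong one: the offset $N_m$ must be subtracted from $s$ \emph{before} comparing with $ez$, and $7.09 - \sqrt{\pi} < e\sqrt{2\pi}$. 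Moreover, even where $\theta \le 1/2$ (e.g. pixels with $r_j \le 1$, giving $\theta \approx 0.45$), the leading term is only bounded by $\theta^{\,s-N_m} \approx 2^{-6.1\sqrt{p}}$, whereas the hypotheses permit $\gamma = 2^{-s} = 2^{-7.09\sqrt{p}}$; the deficit is a factor of roughly $2^{N_m} \approx 2^{1.77\sqrt{p}}$, so the hypothesis $s \ge \log_2\gamma^{-1}$ does not convert your bound into $\gamma$. Worse, because your aliasing identity is exact, the first aliased term $|J_{s-N_m}(r_j\lambda_m)|$ is also, up to much smaller corrections, a \emph{lower} bound on the true error, and by Debye's asymptotics it is of order $2^{-6.3\sqrt{p}}$ at the extreme parameters $n = N_m$, $\rho = \lambda_m$, $r_j = 1$. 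So no sharpening of your tail estimate can rescue the constant $7.09$ when $\log_2\gamma^{-1}$ is comparable to $7.09\sqrt{p}$; along your route one needs a hypothesis of the form $s \ge N_m + \max\{ e z_{\max}, \log_2 (2\gamma^{-1})\}$, i.e. a constant near $8.6$ rather than $7.09$. This also says something about the paper: its proof attains $7.09$ only via the estimate $|g^{(s)}_{nj}| \le (\lambda_m+N_m)^s/(2\pi)$, which neglects both the factor $r_j \le \sqrt{2}$ and the Fa\`a di Bruno cross terms in the $s$-th $\phi$-derivative, so that $N_m$ enters the base of the exponential rather than as an offset in the exponent; your exact representation shows those constants are not attainable in the extreme regime, although for the moderate accuracies relevant in practice ($\log_2\gamma^{-1} \ll \sqrt{p}$) both arguments support the intended conclusion that $s = \mathcal{O}(\sqrt{p} + \log\gamma^{-1})$ angular nodes suffice.
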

It will be clear from  the proof that the constant $7.09$ in the statement of the lemma is an overestimate; see Remark \ref{rmk:refined_estimate} for a discussion of how this constant can be improved.

\begin{proof}[Proof of Lemma \ref{lem:num_angular_nodes}]
Let
\begin{equation} \label{eq:g_def}
g_{nj}(\rho,\phi) = \frac{\imath^n}{2\pi} e^{\imath r_j \rho \cos(\theta_j - \phi)} e^{-\imath n \phi},
\end{equation}
for $n \in \{0,\ldots,s-1\}$ and $j \in \{1,\ldots,p\}$. We want to show that
\begin{equation} \label{eq:sum_ang_g}
\left| \frac{2\pi}{s} \sum_{\ell=0}^{s-1} g_{nj}(r,\theta) - J_n(r_j \rho) e^{-i n \theta_j} \right| < \gamma.
\end{equation}
Notice that the sum in \eqref{eq:sum_ang_g}  is a discretization of the integral
\begin{equation}\label{eq:exact_integral}
\int_0^{2\pi} g_{n j}(t_k, \phi) d\phi = J_n(r_jt_k)e^{-\imath n \theta_j},
\end{equation}
where the exact expression for the integral results from \eqref{eq:jn_as_fourier_coeff} and a change of variables from $\phi \mapsto \phi + \theta_j + \pi/2 $ in the integral. It follows from Lemma \ref{newlemma} that
\begin{equation}\label{eq:bound_gj}
\begin{split}
\left| \frac{2\pi}{s}\sum_{\ell=0}^{s-1} g_{n j}(\rho,\phi_\ell) - J_n(r_j \rho)e^{-\imath n \theta_j}\right| &= \left| \frac{2\pi}{s}\sum_{\ell=0}^{s-1} g_{n j}(\rho,\phi_\ell) - \int_0^{2\pi} \!\!\!\! g_j(\rho, \phi) d\phi\right| \\
& \le \frac{{4} \|g_{n j}^{(s)}(\rho,\cdot)\|_{L^1}}{s^s}  ,
\end{split}
\end{equation}
where $g_{n j}^{(s)}(\rho,\phi)$ denotes the $s$-th derivative of 
$g_{n j}(\rho,\phi)$ with respect to $\phi$.
From definition \eqref{eq:g_def} of $g_{nj}(\rho,\phi)$, we have the estimate
$$
\left| g_{nj}^{(s)}(\rho,\phi) \right| \le \frac{(\lambda_m + N_m)^s}{2\pi}, 
$$
for all  $\rho \in [\lambda_1,\lambda_m]$, $n \in \{-N_m,\ldots,N_m\}$, $j=1,\ldots,p$, and $\phi \in [0,2\pi]$.
Therefore, since $4/(2\pi) \le 1$, it suffices to choose $s$ such that
\begin{equation}  \label{eq:angular_error}
\left( \frac{\lambda_m + N_m}{s} \right)^s \le \gamma .
\end{equation}
It follows that choosing
$
s = \max\{ 2 (\lambda_m + N_m), \log_2 \gamma^{-1} 
\}
$
achieves  error at most $\gamma$. To complete the proof, we note that $\lambda_m \le \lambda$, where $\lambda$ is the maximum bandlimit from \S\ref{sec:bandlimit}. Also by \cite[10.21.40]{dlmf} we have
$$
\lambda_{n 1} = n + 1.8575 n^{1/3} + \mathcal{O}(n^{-1/3}),
$$
which implies that the maximum angular frequency 
\begin{equation} \label{eq:bound_N_m}
N_m \le \lambda.
\end{equation}
We conclude that
$
s = \max \{ 4 \lambda, \log_2 \gamma^{-1}  \},
$
is sufficient to achieve error $\gamma$. Since we assume $\lambda \le \sqrt{\pi p}$ and $4 \sqrt{\pi} \le 7.09$, the proof is complete.
\end{proof}

\subsection{Number of radial nodes} \label{sec:num_nodes}
The following lemma shows that $\mathcal{O}(\sqrt{p})$ Chebyshev nodes are sufficient for accurate interpolation in Step~3 of Algorithm \ref{algo}. 
\begin{lemma} \label{lem:num_radial_nodes}
Let the number of radial nodes 
\begin{equation}
q = \lceil \max\left\{ 2.4 \sqrt{p} , \log_2 \gamma^{-1} \right\} \rceil.
\label{eq:num_radial_nodes}
\end{equation}
Let $P_n$ be the degree $q-1$ polynomial such that
$$
P_n(t_k) =  J_n(r_j t_k)e^{-\imath n \theta_j},
$$
for $k \in \{0,\ldots,q-1\}$, 
where $t_k$ are Chebyshev nodes for $[\lambda_1,\lambda_m]$, see \eqref{eq:def_chebyshev}. Then,
$$
|P_n(\rho) -J_n(r_j \rho)e^{-\imath n \theta_j}| \le \gamma,
$$ 
for $\rho \in [\lambda_1,\lambda_m]$, $n \in \{-N_m,\ldots,N_m\}$, and $j \in \{1,\ldots,p\}$.
\end{lemma}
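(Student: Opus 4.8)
The plan is to reduce the claim to a statement about Chebyshev interpolation of an entire function of controlled exponential type, and then invoke the classical Bernstein estimate. Since the phase factor $e^{-\imath n\theta_j}$ has modulus one and does not depend on the interpolation variable $t$, it factors out of both $P_n$ and the target, so it suffices to bound the degree $q-1$ Chebyshev interpolation error of the single-variable function $t\mapsto J_n(r_j t)$ on $[\lambda_1,\lambda_m]$, uniformly over all orders $n\in\{-N_m,\ldots,N_m\}$ and all pixels $j$. The crucial analytic input is a uniform growth bound for $J_n$ in the complex plane: extending the integral representation \eqref{eq:jn_as_fourier_coeff} to complex argument $z$ gives $J_n(z)=\frac{1}{2\pi}\int_0^{2\pi}e^{\imath z\sin\theta}e^{-\imath n\theta}d\theta$, whence $|e^{\imath z\sin\theta}|=e^{-\mathrm{Im}(z)\sin\theta}$ yields $|J_n(z)|\le e^{|\mathrm{Im}\,z|}$ for every integer $n$. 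I would emphasize that this bound is independent of $n$, which is exactly what lets a single value of $q$ work for all orders; the naive power-series bound $|J_n(z)|\le e^{|z|}$ would be too weak and is the reason the integral representation is needed here.

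Next I would rescale $[\lambda_1,\lambda_m]$ to $[-1,1]$ via $t=\frac{\lambda_m-\lambda_1}{2}x+\frac{\lambda_1+\lambda_m}{2}$ (matching the Chebyshev nodes \eqref{eq:def_chebyshev}) and estimate the function on a Bernstein ellipse $E_\rho$ with sum of semiaxes $\rho>1$. A point of $E_\rho$ has imaginary part at most $\tfrac12(\rho-\rho^{-1})$ in the $x$-variable, so the argument $r_j t$ satisfies $|\mathrm{Im}(r_j t)|\le \frac{r_j(\lambda_m-\lambda_1)}{4}(\rho-\rho^{-1})\le \frac{\lambda_m-\lambda_1}{4}(\rho-\rho^{-1})$, using $r_j\le 1$ since the pixels lie in the unit disk (see \eqref{eq:pixel_locs}). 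Combined with the previous bound this gives $M:=\sup_{E_\rho}|J_n(r_j\,\cdot)|\le \exp\!\big(\tfrac{\lambda_m-\lambda_1}{4}(\rho-\rho^{-1})\big)$, uniformly in $n$ and $j$. The classical Bernstein estimate for Chebyshev interpolation of analytic functions then bounds the error of the degree $q-1$ interpolant by $\frac{4M\rho^{-(q-1)}}{\rho-1}$.

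The final step is to choose $\rho$ and verify the constants. I would take $\rho=4$, so that $\rho^{-(q-1)}=2^{-2(q-1)}$ and base-two logarithms (matching the $\log_2\gamma^{-1}$ in \eqref{eq:num_radial_nodes}) appear naturally; the error bound becomes $\tfrac{4}{3}\exp\!\big(\tfrac{15}{16}(\lambda_m-\lambda_1)\big)4^{-(q-1)}$. Taking $\log_2$, inserting $\lambda_m\le\sqrt{\pi p}$ from \eqref{eq:bandlimit_simple} and $\lambda_1=\lambda_{01}=j_{0,1}\approx 2.405$, and using that $\tfrac{15}{16}\sqrt{\pi}\log_2 e\approx 2.3975<2.4$, the claim reduces to checking an inequality of the shape $2q\ge 2.3975\sqrt p+\log_2\gamma^{-1}+O(1)$ in both branches of the maximum in \eqref{eq:num_radial_nodes}. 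In the branch $2.4\sqrt p\ge\log_2\gamma^{-1}$ one has $\tfrac12\log_2\gamma^{-1}\le 1.2\sqrt p$, and in the branch $\log_2\gamma^{-1}>2.4\sqrt p$ the corner-turning cost $1.2\sqrt p$ consumes at most half the node budget; in either case the inequality closes.

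The main obstacle is the tightness of the constant bookkeeping rather than any conceptual difficulty. Two points require care. First, the growth estimate must be the sharp $e^{|\mathrm{Im}\,z|}$ obtained from the integral representation and must be uniform in $n$; otherwise $q$ would have to grow with the order. Second, and most delicately, the constant $2.4$ in \eqref{eq:num_radial_nodes} barely suffices: one must retain the interval length $\lambda_m-\lambda_1$ (rather than crudely bounding it by $\lambda_m$), since the resulting $\lambda_1>0$ correction is precisely what makes the first branch of the maximum valid for all $p$. The use of a single maximum (rather than a sum of order $\sqrt p+\log_2\gamma^{-1}$) works only because $\log_2\rho=2$ for $\rho=4$, so that each Chebyshev node beyond the corner contributes two bits of accuracy; verifying this balance is the heart of the argument.
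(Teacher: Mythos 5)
Your proof is correct, but it takes a genuinely different route from the paper's. The paper works entirely on the real line: it invokes the derivative-form Chebyshev remainder $|R(\rho)|\le \frac{C_q}{q!}\left(\frac{b-a}{4}\right)^q$ (citing Rokhlin), bounds $C_q=\max_\rho\left|\partial_\rho^q J_n(r_j\rho)\right|\le 1$ by differentiating under the integral sign in \eqref{eq:jn_as_fourier_coeff}, and finishes with Stirling's approximation, choosing $q$ so that the ratio $\sqrt{\pi p}\,e/(4q)$ is at most $1/2$; this is where its constant $e\sqrt{\pi}/2\approx 2.41$ (rounded to $2.4$) comes from. You instead work in the complex plane: the same integral representation gives the $n$-uniform exponential-type bound $|J_n(z)|\le e^{|\mathrm{Im}\,z|}$, after which the Bernstein-ellipse theorem for Chebyshev interpolation of analytic functions with ellipse parameter $\varrho=4$ yields geometric decay, with constant $\frac{15}{16}\sqrt{\pi}\log_2 e\approx 2.397$. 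Both arguments pivot on exactly the same special feature---\eqref{eq:jn_as_fourier_coeff} is what makes the estimates uniform in $n$, so a single $q$ serves all orders---but the mechanisms differ ($q$-th derivative plus factorial versus analytic continuation plus ellipse). What each buys: the paper's route is more elementary and self-contained given its cited real-variable lemma, and it never needs the $\lambda_1$ offset since Stirling leaves ample slack; your route gives a marginally sharper constant ($2.397<2.4$, whereas the paper's own derivation actually demands $\approx 2.409$ and rounds down), and the ``two bits per node'' accounting from $\varrho=4$ makes the resolution of the two branches of the maximum in \eqref{eq:num_radial_nodes} fully transparent---your observation that retaining $\lambda_m-\lambda_1$ (i.e., the $\lambda_1=j_{0,1}\approx 2.405$ correction) is what closes the first branch is accurate, and is a need created by your tighter bookkeeping rather than a flaw. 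One caveat you share with the paper: both proofs silently use $r_j\le 1$ (your ellipse bound via $|\mathrm{Im}(r_jt)|\le|\mathrm{Im}\,t|$, the paper's bound $|(\imath r_j\sin\theta)^q|\le 1$), although the lemma is stated for all $j\in\{1,\ldots,p\}$ and corner pixels of $[-1,1]^2$ have $r_j$ up to $\sqrt{2}$; so this is an inherited imprecision of the statement, not a gap in your argument relative to the paper's.
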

As above, we emphasize that the constant $2.4$ in the statement of this result is an overestimate. 
See \ref{rmk:refined_estimate} for a discussion about how this constant can be improved.
\begin{proof}[Proof of Lemma \ref{lem:num_radial_nodes}]
When interpolating a smooth
differentiable  function $h$ defined on the interval $[a,b]$ using an interpolating polynomial $P$ at $q$
Chebyshev nodes, the residual term $R(\rho) = h(\rho) - P(\rho)$ can be written as
\begin{equation*}
    |R(\rho)| \leq \frac{C_q}{q!} \left(\frac{b-a}{4}\right)^q,
\end{equation*}
where $C_q := \max_{\rho \in [a,b]} |h^{(q)}(\rho) |$; see \cite[Lemma~2.1]{rokhlin1988fast}.
If we apply this result with $[a,b] = [\lambda_{1}, \lambda_{m}]$, the residual satisfies
\begin{equation*}
    |R(\rho)| \leq \frac{C_q}{q !} \left(\frac{\lambda_{m} - \lambda_{1}}{4}\right)^q \leq \frac{C_q }{q !} \left(\frac{ \sqrt{\pi p}}{4}\right)^q ,
\end{equation*}
where the final inequality follows from the bound $\lambda_m \leq  \sqrt{\pi p}$; see \S \ref{sec:bandlimit}. In order to apply this bound to $J_n(r_j\rho)e^{-\imath n \theta_j}$, we estimate 
$$C_q := \max_{\rho \in [\lambda_1,\lambda_m]} \left| \frac{\partial ^ q}{d\rho^q}\left(J_n(r_j\rho)\right)e^{-\imath n \theta_j} \right|.$$
We expand the function $J_n(r_j\rho)$ using the integral identity in \eqref{eq:jn_as_fourier_coeff} and obtain
\begin{eqnarray*}
\left|\frac{\partial ^ q}{d\rho^q}\left(J_n(r_j\rho)\right) \right| &=& 
\left|\frac{1}{2\pi} \int_0^{2\pi}\frac{\partial ^ q}{d\rho^q} \left( e^{\imath r_j\rho \sin(\theta) - \imath n \theta } \right)\text{d}\theta \right| ,\\
&=&  \left|\frac{1}{2\pi} \int_0^{2\pi} \left(\imath r_j \sin(\theta) \right)^q e^{\imath r_j\rho \sin(\theta) - \imath n \theta } \text{d}\theta \right|, \\
&\leq&  \left(\frac{1}{2\pi} \int_0^{2\pi}  \text{d}\theta \right) .
\end{eqnarray*}
In combination with Stirling's approximation \cite[5.11.3]{dlmf}, it follows that
$$
|R(\rho)| \le \frac{1}{ q!} \left( \frac{ \sqrt{\pi p}}{4} \right)^q \leq  \left( \frac{ \sqrt{\pi p} e}{4 q} \right)^q .
$$
Therefore, in order to achieve  error $|R(\rho)| \leq \gamma$, it suffices to set $q$ such that
\begin{equation} \label{eq:nonlinear_q}
\gamma \geq \left( \frac{ \sqrt{\pi p} e}{4 q} \right)^q.
\end{equation}
Setting $\sqrt{\pi p} e/4 q= 1/2$
and solving for $q$ gives
$$
q = \frac{\sqrt{\pi} e \sqrt{p}}{2} \approx 2.4 \sqrt{p}
\quad
\implies
\quad
q \ge \max\{ 2.4 \sqrt{p} , \log_2 \gamma^{-1}),
$$
is sufficient to achieve  error less than $\gamma$. 
\end{proof}

\begin{remark}[Improving estimates for  number of radial and angular nodes]
\label{rmk:refined_estimate}
 While Lemmas \ref{lem:num_radial_nodes} and \ref{lem:num_angular_nodes} show that the number of radial nodes 
$q$ and angular nodes $s$ are $\mathcal{O}(\sqrt{p})$, the constants in the lemmas are not optimal. 
For practical purposes, choosing the minimal number of nodes possible to achieve the desired error is advantageous 
to improve the run time constant of the algorithm, and it is clear from
the proofs how the estimates can be refined.
For Lemma \ref{lem:num_radial_nodes} we set
 $Q = \lceil 2.4 \sqrt{p} \rceil$, and motivated by \eqref{eq:nonlinear_q} compute
$$
\gamma^\text{rad}(q) = \frac{1}{\sqrt{\pi} q!} \left( \frac{\sqrt{\pi p}}{4} \right)^q,
$$
for $q = 1,\ldots,Q$ and choose the smallest value $q^*$ of $q$ such that $\gamma^\text{rad}(q^*) \le \gamma$. Similarly, for Lemma \ref{lem:num_angular_nodes}, we set
$S = \lceil 7.09 \sqrt{p} \rceil$, and motivated by 
\eqref{eq:angular_error} compute
$$
\gamma^\text{ang}(s) =  \left( \frac{\lambda_m + N_m}{s} \right)^s,
$$
for $s = 1,\ldots,S$ and choose the smallest value $s^*$ of $s$ such that $\gamma^\text{ang}(s) \le \gamma$. Then, it follows  that $2.4 \sqrt{p}$ and $7.09 \sqrt{p}$ can be replaced by $q^*$ and $s^*$, 
 in the statements of Lemmas
  \ref{lem:num_radial_nodes} and \ref{lem:num_angular_nodes}, respectively. This procedure improves the estimate of the required number of angular and radial nodes by a constant factor.
\end{remark}

\section{Numerical results} \label{sec:numerics}

\begin{remark}[FFT Bandlimit heuristic] \label{rmk:bandlimit_heuristic}
One heuristic for setting the bandlimit is based on the fast Fourier transform (FFT). For a centered FFT on a signal of length $L$, the maximum frequency is $\pi^2 (L/2)^2$, which corresponds to a bandlimit of $\lambda = \pi L/2$. Note that 
\begin{equation} \label{eq:bandlimit_est}
\pi L/2  \approx 1.57 L < 
1.77 L \approx
\sqrt{\pi} \lfloor (L-1)/2 \rfloor /2, 
\end{equation}
so this FFT bandlimit heuristic does indeed produce a reasonable bandlimit below the bound \eqref{eq:bandlimit} derived from Weyl's law. We use this bandlimit for our numerical experiments. The computational complexity and accuracy guarantees of the method presented in this paper hold for any bandlimit $\lambda = \mathcal{O}(L)$. However, the fact that the fast method performs interpolation in Fourier space inside a disk bounded by the maximum bandlimit provides additional motivation for this FFT-based heuristic since it will ensure that the disk will be contained within the square in frequency space used by the two-dimensional FFT. 
\end{remark}

\subsection{ Numerical accuracy results}
In this section, we report numerical results for the accuracy of our FDHT method compared to matrix
multiplication. The implementation of the method is based on the parameters $\varepsilon^{\text{dis}}$, $\varepsilon^{\text{nuf}}$, $\varepsilon^{\text{fst}}$, $s$, and $q$, which result in the error guarantees in Theorem~\ref{thm1}. However, since these theoretical error bounds are slightly pessimistic, and do not account for errors from finite precision arithmetic, the parameters used by the implementation of the algorithm are tuned slightly so that the code achieves the desired accuracy in numerical tests. Recall that $B : \mathbb{C}^m \rightarrow \mathbb{C}^p$ maps coefficients to images by
$$
(B \alpha)_j = \sum_{i=1}^m \alpha_i \psi_i(x_j)h,
$$
and its adjoint
transform $B^* : \mathbb{C}^p \rightarrow \mathbb{C}^m$ maps images to coefficients  by
$$
(B^* f)_i =\sum_{j=1}^p f_j \overline{\psi_i(x_j)}h,
$$
see \S \ref{sec:main_result}. By defining the $m \times p$ matrix $B$ by
$$
B_{ij} = \psi_i(x_j) h,
$$
we can apply $B$ and $B^*$ by dense matrix multiplication to test the accuracy of our fast method. Since the size of the matrix  scales like $L^4$ for $L \times L$ images, constructing these matrices quickly becomes
prohibitive so the comparison is only given up to $L=160$, see Table \ref{tab:accuracy}, where
$$
\text{err}_\alpha = \frac{\|\alpha_\text{fast} - \alpha_\text{dense}\|_{\ell^2}}{\|\alpha_\text{dense}\|_{\ell^2}} \quad \text{and} \quad
\text{err}_f = \frac{\|f_\text{fast} - f_\text{dense}\|_{\ell^2}}{\|f_\text{dense}\|_{\ell^2}},
$$
denote the relative errors of the coefficients  and the image, respectively,
where $\alpha_\text{dense} = B^* f$ 
and $f_\text{dense} = B \alpha$ are computed by dense matrix multiplication and $\alpha_\text{fast}$ 
and $f_\text{fast}$ are the corresponding quantities computed using the fast algorithm of this paper.

\begin{table}[h!]
\centering
\caption{Relative error of fast method compared to dense matrix multiplication.} \label{tab:accuracy}
\begin{tabular}{r|ccc || ccc}
$L$ & $\varepsilon$ & $\text{err}_\alpha$ & $\text{err}_f$ & $\varepsilon$ & $\text{err}_\alpha$ & $\text{err}_f$ \\
\hline
64 &  1.00e-04 &  1.92422e-05 &  2.10862e-05 &  1.00e-10 &  3.55320e-11 &  2.36873e-11 \\
96 &  1.00e-04 &  1.82062e-05 &  2.52219e-05 &  1.00e-10 &  2.99849e-11 &  2.48166e-11 \\
128 &  1.00e-04 &  1.90648e-05 &  2.41142e-05 &  1.00e-10 &  3.25650e-11 &  2.61890e-11 \\
160 &  1.00e-04 &  2.00748e-05 &  2.49488e-05 &  1.00e-10 &  3.13903e-11 &  3.50455e-11 \\
\hline \hline
64 &  1.00e-07 &  2.03272e-08 &  2.98083e-08 &  1.00e-14 &  7.41374e-15 &  6.82660e-15 \\
96 &  1.00e-07 &  2.28480e-08 &  2.58272e-08 &  1.00e-14 &  9.82890e-15 &  8.80843e-15 \\
128 &  1.00e-07 &  2.69215e-08 &  2.27676e-08 &  1.00e-14 &  1.21146e-14 &  1.11909e-14 \\
160 &  1.00e-07 &  2.47053e-08 &  2.51146e-08 &  1.00e-14 &  1.36735e-14 &  1.51430e-14 \\

\end{tabular}
\end{table}

The image used for the accuracy comparison is a tomographic projection of a 3-D density map representing a  bio-molecule (E. coli 70S ribosome) \cite{shaikh2008spider}, retrieved from the online EM data bank~\cite{lawson2016emdatabank}.

\subsection{Timing results}
In this section, we plot the timing of our FDHT method for $L \times L$ images with
$p = L^2$ pixels.
We demonstrate that the method does indeed have complexity $\mathcal{O}(p \log
p)$ and that the timings are practical. We plot the time of pre-computation and the time of applying $B$ using the fast method; for comparison, we include timings for forming and applying the dense matrix $B$, see Fig. \ref{fig:03}. The timings for applying $B^*$ are similar to the timings for applying $B$ (since the algorithm consists of applying similar transforms in the reverse order), so a separate plot was not included.

The timings were carried out on a computer with an AMD 5600X
processor and 24 GB of memory. We set $\varepsilon = 10^{-7}$ for the reported timings, and compare to the dense method up to $L=160$. For $L>160$, comparison to the dense method was prohibitively expensive. For reference, storing the dense transform matrix in double precision complex numbers for $L=512$ would require about $640$ GB of memory.
The NUFFT uses the FINUFFT implementation \cite{barnett2021aliasing,barnett2019parallel}. The image used for the timing results is a tomographic projection of a 3-D density map representing the SARS-CoV-2 Omicron spike glycoprotein complex \cite{guo2022structures}, retrieved from the online EM data bank~\cite{lawson2016emdatabank}.
\begin{figure}[ht!]
\centering
\begin{tabular}{cc}
\includegraphics[width=.45\textwidth]{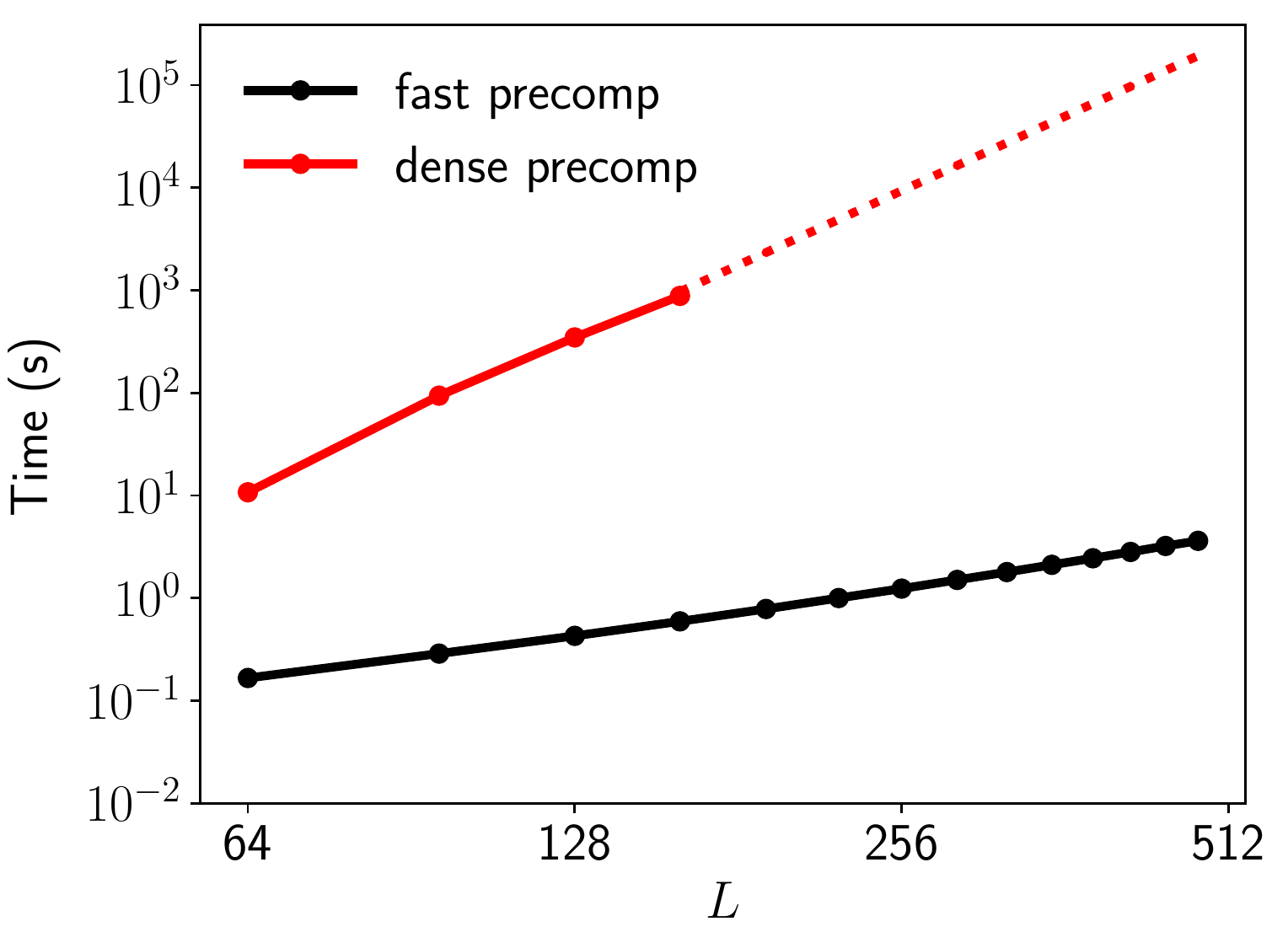} &
\includegraphics[width=.45\textwidth]{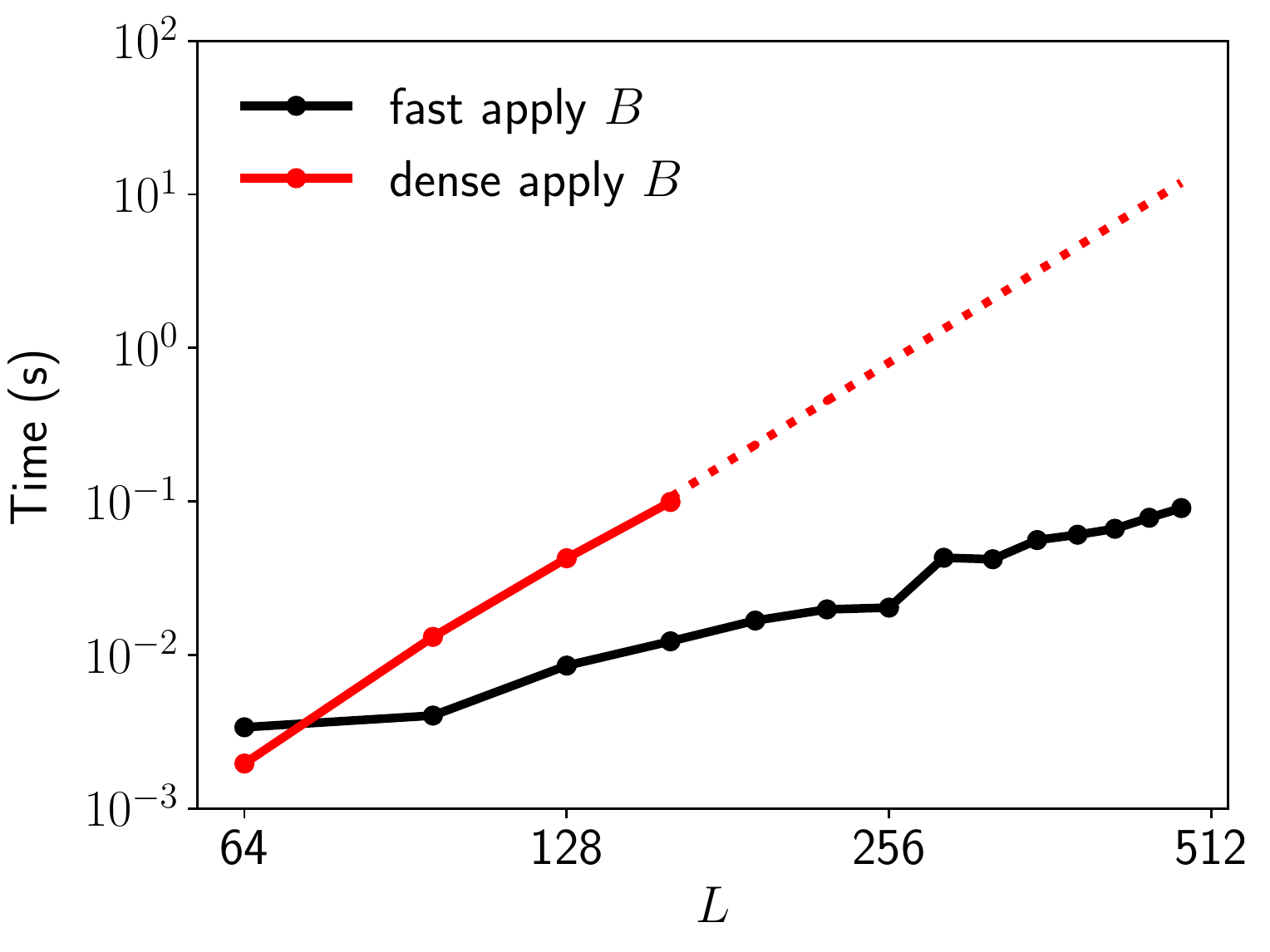}
\end{tabular}
\caption{Timings of fast method versus dense method for pre-computation (left) and applying $B$ (right). 
The timings for the dense method for $L > 160$ are extrapolated since the memory requirements for the dense method were prohibitive. 
}  \label{fig:03}
\end{figure}

\begin{remark}[Pre-computation time negligible when transforming many images]
The pre-computation involves organizing Bessel function roots and creating data structures for the NUFFT and interpolation steps of the algorithm. The pre-computation only needs to be performed once for a given size of image $L$ and becomes negligible when the method is used to expand a large enough set of images (around $100$ images), which is a typical use-case in, for example, applications in cryo-EM \cite{bhamre2016denoising}.
\end{remark}

\begin{remark}[Breakdown of timing of fast algorithm]
Each step of the algorithm has roughly the same magnitude.
For example, for $L=512$ and $\varepsilon = 10^{-7}$ the timings of the NUFFT, FFT, and Interpolation steps of the algorithm for applying $B$ are $0.035$,  $0.046$, and $0.026$ seconds, respectively. We note that the timing of each step is dependent on the choice of parameters. For example, sampling more points will increase the cost of the NUFFT step but decrease the cost of the interpolation step since sparser interpolation matrices can be used; decreasing $\varepsilon$ will increase the cost of the NUFFT step.
\end{remark}

\begin{remark}[Parallelization]
The timings reported in Fig.~\ref{fig:03} are for a single-threaded CPU code. However, each step of the code is amenable to parallelization through GPU implementations. Indeed, the NUFFT step has a GPU implementation \cite{shih2021cufinufft}, and the 2-D FFT and interpolation steps can also benefit from straightforward parallelization schemes.
\end{remark}

\subsection{Numerical example: convolution and rotation}
We lastly present an example illustrating the use of the steerable and fast radial convolution properties of the eigenbasis. The example is motivated by cryo-EM, wherein tomographic projection images of biological molecules in a sample are registered by electron beams; see, for example, \cite{frank2006three} for more information. Because of aberrations within the electron-microscope and random in-plane rotations of the molecular samples, the registered image $I_r$ does not precisely coincide with the actual projection image $I_p$, and the following model is used:
\begin{equation}
    I_r(x) = c(|x|)*R_{\theta}\left( I_p(x) \right) + \eta,
\end{equation}
where $R_\theta$ describes rotation around the origin by an angle of $\theta$, $c$ is a radial function termed the point-spread function and $\eta$ is additive white noise. The function $\widehat{c}$ is, in turn, known as the contrast transfer function (CTF). Examples of point spread functions are shown in Fig. \ref{fig:04}.

\begin{figure}[ht]
\centering
\begin{tikzpicture}[scale=\textwidth/10cm]
\node[anchor=north west] at (0,2) {\includegraphics[width=2.4cm]{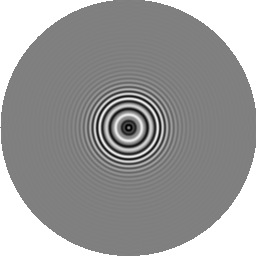}};
\node[anchor=north west] at (2,2) {\includegraphics[width=2.4cm]{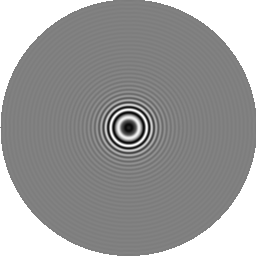}};
\node[anchor=north west] at (4,2) {\includegraphics[width=2.4cm]{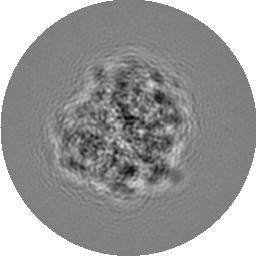}};
\node[anchor=north west] at (6,2) {\includegraphics[width=2.4cm]{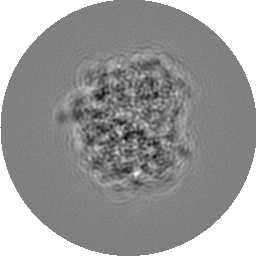}};
\node[anchor=north west] at (0,0) {\includegraphics[width=2.4cm]{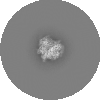}};
\node[anchor=north west] at (2,0) {\includegraphics[width=2.4cm]{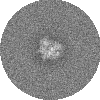}};
\node[anchor=north west] at (4,0) {\includegraphics[width=2.4cm]{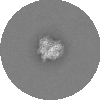}};
\node[anchor=north west] at (6,0) {\includegraphics[width=2.4cm]{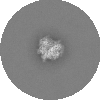}};
\node at (0,2) {\LARGE a};
\node at (2,2) {\LARGE b};
\node at (4,2) {\LARGE c};
\node at (6,2) {\LARGE d};
\node at (0,0) {\LARGE e};
\node at (2,0) {\LARGE f};
\node at (4,0) {\LARGE g};
\node at (6,0) {\LARGE h};
\end{tikzpicture}
\caption{Two different point spread functions (a--b), result of their convolution with a fixed image and subsequent rotation (c--d), (e) Projection of reference image into the eigenbasis using the fast algorithm. (f-g) result of deconvolution algorithm using $t=1,$ $3,5$, respectively. 
}  \label{fig:04}
\end{figure}

Notably, the regions of the frequency space where $\widehat{c}$ equals zero destroy information of $I_p(x)$. However, the fact that convolution is a diagonal transformation of the coefficients in the basis of eigenfunctions enables reconstruction of a fixed projection image $I_p$ from a small number of registered images $I_{r}^{(i)}$ with different point spread functions $c_i(|x|)$, rotations $R_{\theta_i}$, and noise $\eta^{(i)}$, for $i=1, \ldots , t$. From Lemma~\ref{lem:coeff_from_beta}, it follows that the basis coefficients $\alpha_{nk}^{(i)}$ of the registered images satisfy
\begin{equation}\label{eq:deconv_eq}
    \alpha_{nk}^{(i)} = \widehat{c}_i(\lambda_{nk}) e^{\imath n \theta_i} \alpha_{nk}^{(0)} + \eta_{nk}^{(i)}, \quad \text{ for } i = 1, \ldots , t,
\end{equation}
where $\alpha_{nk}^{(0)}$ denote the basis coefficients of $I_p$. We assume that the parameters $\theta_i$ and $c_i$ are known or estimated to a desired precision. We remark that the standard FFT can be used to solve this problem when there are no rotations.

To recover the $\alpha_{nk}^{(0)}$, we find the least-squares optimizers of \eqref{eq:deconv_eq}. To improve the conditioning of the problem, \eqref{eq:deconv_eq} is thresholded to exclude the values of $i$ for which $\widehat{c}_i(\lambda_{nk})$ has sufficiently low magnitude. We therefore estimate $\alpha^{(0)}_{nk}$ by $\alpha^{(0)}_{nk} \approx \alpha_{nk}$, with $\alpha_{nk}$ defined by
\begin{equation}\label{eq:est_alpha_wiener}
\alpha_{nk} = \min_{\alpha_{nk}} \sum_{(n,k) \in \mathcal{I}} \sum_{i=1}^t \gamma_{nk}^{(i)}\cdot \left| \alpha_{nk}^{(i)} - \widehat{c}_i(\lambda_{nk}) e^{\imath n \theta_i} \alpha_{nk} \right|^2,
\end{equation}
where $\gamma_{nk}^{(i)} = 0$ if $|\widehat{c}_i(\lambda_{nk})| < \tau$, for a given threshold $\tau$, and $\gamma_{nk}^{(i)} = 1$ otherwise. This describes a decoupled least-squares problem for each coefficient $\alpha_{nk}$, which can be solved efficiently. We remark that \eqref{eq:est_alpha_wiener} is a basic version of Wiener filtering \cite{bhamre2016denoising}, which we use for simplicity of exposition. The result of this procedure for different values of $t$ and a non-zero value of the noise $\eta$ is shown in Fig.~\ref{fig:04}.

\section{Discussion} \label{sec:discuss}
This paper presents a fast method for expanding a set of $L\times L$-images into the basis of eigenfunctions of the Laplacian on the disk. The approach calculates the expansion coefficients from interpolation of the Fourier-transform of the image on distinguished subsets of the frequency space and relies on an integral identity of the Fourier-transform of the eigenfunctions. Unlike previous approaches \cite{MR3472531}, we demonstrate that our fast method is guaranteed to coincide with a dense, equivalent method up to a user-specified precision. 
Moreover, our method provides a natural way to compute the convolution with radial functions.
Potential extensions of the presented method include extending the method to three dimensions or other domains in two dimensions.

\subsection*{Acknowledgements}
The authors would like to thank Joakim And\'en, Yunpeng Shi, and Gregory Chirikjian for their helpful comments on a draft of this paper. We also thank two anonymous reviewers for their comments which improved the exposition of the manuscript.

\bibliographystyle{plain}

\bibliography{references}

\begin{thebibliography}{10}

\bibitem{barnett2021aliasing}
Alex~H Barnett.
\newblock {Aliasing error of the $\exp(\beta\sqrt{1-z^2})$ kernel in the
  nonuniform fast Fourier transform}.
\newblock {\em Applied and Computational Harmonic Analysis}, 51:1--16, 2021.

\bibitem{barnett2019parallel}
Alexander~H Barnett, Jeremy Magland, and Ludvig af~Klinteberg.
\newblock {A parallel nonuniform fast Fourier transform library based on an
  ``Exponential of semicircle'' kernel}.
\newblock {\em SIAM Journal on Scientific Computing}, 41(5):C479--C504, 2019.

\bibitem{berrut2004barycentric}
Jean-Paul Berrut and Lloyd~N Trefethen.
\newblock Barycentric lagrange interpolation.
\newblock {\em SIAM review}, 46(3):501--517, 2004.

\bibitem{bhamre2016denoising}
Tejal Bhamre, Teng Zhang, and Amit Singer.
\newblock {Denoising and covariance estimation of single particle cryo-EM
  images}.
\newblock {\em Journal of structural biology}, 195(1):72--81, 2016.

\bibitem{cheng2018rotdcf}
Xiuyuan Cheng, Qiang Qiu, Robert Calderbank, and Guillermo Sapiro.
\newblock {Rotdcf: Decomposition of convolutional filters for
  rotation-equivariant deep networks}.
\newblock {\em arXiv preprint arXiv:1805.06846}, 2018.

\bibitem{cheng2015primer}
Yifan Cheng, Nikolaus Grigorieff, Pawel~A Penczek, and Thomas Walz.
\newblock A primer to single-particle cryo-electron microscopy.
\newblock {\em Cell}, 161(3):438--449, 2015.

\bibitem{cohen2016steerable}
Taco~S Cohen and Max Welling.
\newblock {Steerable CNNs}.
\newblock {\em arXiv preprint arXiv:1612.08498}, 2016.

\bibitem{colin2010remainder}
Yves Colin~de Verdi{\`e}re.
\newblock {On the remainder in the Weyl formula for the Euclidean disk}.
\newblock {\em S{\'e}minaire de th{\'e}orie spectrale et g{\'e}om{\'e}trie},
  29:1--13, 2010.

\bibitem{dlmf}
{\it NIST Digital Library of Mathematical Functions}.
\newblock http://dlmf.nist.gov/, Release 1.1.5 of 2022-03-15.
\newblock F.~W.~J. Olver, A.~B. {Olde Daalhuis}, D.~W. Lozier, B.~I. Schneider,
  R.~F. Boisvert, C.~W. Clark, B.~R. Miller, B.~V. Saunders, H.~S. Cohl, and
  M.~A. McClain, eds.

\bibitem{dutt1996}
A.~Dutt, M.~Gu, and V.~Rokhlin.
\newblock {Fast Algorithms for Polynomial Interpolation, Integration, and
  Differentiation}.
\newblock {\em SIAM Journal on Numerical Analysis}, 33(5):1689--1711, 1996.

\bibitem{Dutt1993}
Alok Dutt and Vladimir Rokhlin.
\newblock Fast fourier transforms for nonequispaced data.
\newblock {\em SIAM Journal on Scientific computing}, 14(6):1368--1393, 1993.

\bibitem{elbert2001some}
A~Elbert.
\newblock Some recent results on the zeros of bessel functions and orthogonal
  polynomials.
\newblock {\em Journal of computational and applied mathematics},
  133(1-2):65--83, 2001.

\bibitem{farashahi2019discrete}
Arash~Ghaani Farashahi and Gregory~S Chirikjian.
\newblock {Discrete Spectra of Convolutions on Disks using Sturm-Liouville
  Theory}.
\newblock {\em arXiv preprint arXiv:1901.05001}, 2019.

\bibitem{farashahi2021fourier}
Arash~Ghaani Farashahi and Gregory~S Chirikjian.
\newblock {Fourier--Zernike series of compactly supported convolutions on SE
  (2)}.
\newblock {\em Journal of Approximation Theory}, 271:105621, 2021.

\bibitem{frank2006three}
Joachim Frank.
\newblock {\em {Three-Dimensional Electron Microscopy of Macromolecular
  Assemblies: Visualization of Biological Molecules in Their Native State}}.
\newblock Oxford University Press, 2006.

\bibitem{gauss_2011}
Carl~Friedrich Gauss.
\newblock {\em De nexu inter multitudinem classium, in quas formae binariae
  secundi gradus distribuuntur, earumque determinantem}, volume~2 of {\em
  Cambridge Library Collection - Mathematics}, page 269–291.
\newblock Cambridge University Press, 2011.

\bibitem{ghaani2018fourier}
Arash Ghaani~Farashahi and Gregory~S Chirikjian.
\newblock {Fourier--Zernike Series of Convolutions on Disks}.
\newblock {\em Mathematics}, 6(12):290, 2018.

\bibitem{ghaani2022fourier}
Arash Ghaani~Farashahi and Gregory~S Chirikjian.
\newblock {Fourier--Bessel series of compactly supported convolutions on
  disks}.
\newblock {\em Analysis and Applications}, 20(02):171--192, 2022.

\bibitem{gimbutas2020fast}
Zydrunas Gimbutas, Nicholas~F Marshall, and Vladimir Rokhlin.
\newblock A fast simple algorithm for computing the potential of charges on a
  line.
\newblock {\em Applied and computational harmonic analysis}, 49(3):815--830,
  2020.

\bibitem{gradshteyn2014table}
Izrail~Solomonovich Gradshteyn and Iosif~Moiseevich Ryzhik.
\newblock {\em Table of integrals, series, and products}.
\newblock Academic press, 2014.

\bibitem{Greengard2004}
Leslie Greengard and June-Yub Lee.
\newblock {Accelerating the nonuniform fast Fourier transform}.
\newblock {\em SIAM review}, 46(3):443--454, 2004.

\bibitem{guo2022structures}
Hangtian Guo, Yan Gao, Tinghan Li, Tingting Li, Yuchi Lu, Le~Zheng, Yue Liu,
  Tingting Yang, Feiyang Luo, Shuyi Song, et~al.
\newblock {Structures of Omicron spike complexes and implications for
  neutralizing antibody development}.
\newblock {\em Cell reports}, 39(5):110770, 2022.

\bibitem{hardy1924lattice}
Godfrey~Harold Hardy and Edmund Landau.
\newblock The lattice points of a circle.
\newblock {\em Proceedings of the Royal Society of London. Series A, Containing
  Papers of a Mathematical and Physical Character}, 105(731):244--258, 1924.

\bibitem{kurganov2009order}
Alexander Kurganov and Jeffrey Rauch.
\newblock The order of accuracy of quadrature formulae for periodic functions.
\newblock In {\em Advances in phase space analysis of partial differential
  equations}, pages 155--159. Springer, 2009.

\bibitem{landa2017approximation}
Boris Landa and Yoel Shkolnisky.
\newblock Approximation scheme for essentially bandlimited and
  space-concentrated functions on a disk.
\newblock {\em Applied and Computational Harmonic Analysis}, 43(3):381--403,
  2017.

\bibitem{landa2017steerable}
Boris Landa and Yoel Shkolnisky.
\newblock {Steerable principal components for space-frequency localized
  images}.
\newblock {\em SIAM journal on imaging sciences}, 10(2):508--534, 2017.

\bibitem{lawson2016emdatabank}
Catherine~L Lawson, Ardan Patwardhan, Matthew~L Baker, Corey Hryc, Eduardo~Sanz
  Garcia, Brian~P Hudson, Ingvar Lagerstedt, Steven~J Ludtke, Grigore Pintilie,
  Raul Sala, et~al.
\newblock {{EMDataBank} unified data resource for {3DEM}}.
\newblock {\em Nucleic Acids Research}, 44(D1):D396--D403, 2016.

\bibitem{lee2005type}
June-Yub Lee and Leslie Greengard.
\newblock {{The type 3 nonuniform FFT and its applications}}.
\newblock {\em Journal of Computational Physics}, 206(1):1--5, 2005.

\bibitem{nogales2015cryo}
Eva Nogales and Sjors~HW Scheres.
\newblock {Cryo-EM: a unique tool for the visualization of macromolecular
  complexity}.
\newblock {\em Molecular cell}, 58(4):677--689, 2015.

\bibitem{papakostas2007new}
George~A Papakostas, Yiannis~S Boutalis, Dimitris~A Karras, and Basil~G
  Mertzios.
\newblock {A new class of Zernike moments for computer vision applications}.
\newblock {\em Information Sciences}, 177(13):2802--2819, 2007.

\bibitem{rangan2020factorization}
Aaditya Rangan, Marina Spivak, Joakim And{\'e}n, and Alex Barnett.
\newblock Factorization of the translation kernel for fast rigid image
  alignment.
\newblock {\em Inverse Problems}, 36(2):024001, 2020.

\bibitem{rokhlin1988fast}
Vladimir Rokhlin.
\newblock {A fast algorithm for the discrete Laplace transformation}.
\newblock {\em Journal of Complexity}, 4(1):12--32, 1988.

\bibitem{shaikh2008spider}
Tanvir~R Shaikh, Haixiao Gao, William~T Baxter, Francisco~J Asturias, Nicolas
  Boisset, Ardean Leith, and Joachim Frank.
\newblock {SPIDER image processing for single-particle reconstruction of
  biological macromolecules from electron micrographs}.
\newblock {\em Nature protocols}, 3(12):1941--1974, 2008.

\bibitem{shih2021cufinufft}
Yu-hsuan Shih, Garrett Wright, Joakim And{\'e}n, Johannes Blaschke, and Alex~H
  Barnett.
\newblock {cuFINUFFT: a load-balanced GPU library for general-purpose
  nonuniform FFTs}.
\newblock In {\em 2021 IEEE International Parallel and Distributed Processing
  Symposium Workshops (IPDPSW)}, pages 688--697. IEEE, 2021.

\bibitem{slepian1961prolate}
David Slepian and Henry~O Pollak.
\newblock {Prolate spheroidal wave functions, Fourier analysis and
  uncertainty—I}.
\newblock {\em Bell System Technical Journal}, 40(1):43--63, 1961.

\bibitem{temme1996special}
Nico~M Temme.
\newblock {\em Special functions: An introduction to the classical functions of
  mathematical physics}.
\newblock John Wiley \& Sons, 1996.

\bibitem{van1923neue}
JG~Van~der Corput.
\newblock {Neue zahlentheoretische Absch{\"a}tzungen}.
\newblock {\em Mathematische Annalen}, 89(3):215--254, 1923.

\bibitem{von1934beugungstheorie}
Zernike von F.
\newblock Beugungstheorie des schneidenver-fahrens und seiner verbesserten
  form, der phasenkontrastmethode.
\newblock {\em Physica}, 1(7-12):689--704, 1934.

\bibitem{wade1992brief}
RH~Wade.
\newblock A brief look at imaging and contrast transfer.
\newblock {\em Ultramicroscopy}, 46(1-4):145--156, 1992.

\bibitem{weiler2018learning}
Maurice Weiler, Fred~A Hamprecht, and Martin Storath.
\newblock {Learning steerable filters for rotation equivariant CNNs}.
\newblock In {\em Proceedings of the IEEE Conference on Computer Vision and
  Pattern Recognition}, pages 849--858, 2018.

\bibitem{MR3472531}
Zhizhen Zhao, Yoel Shkolnisky, and Amit Singer.
\newblock Fast steerable principal component analysis.
\newblock {\em IEEE transactions on computational imaging}, 2(1):1--12, 2016.

\bibitem{Zhao2014}
Zhizhen Zhao and Amit Singer.
\newblock {Rotationally invariant image representation for viewing direction
  classification in cryo-EM}.
\newblock {\em Journal of structural biology}, 186(1):153--166, 2014.

\bibitem{zhou2022spectral}
Rundong Zhou and Nicolas Grisouard.
\newblock {Spectral solver for Cauchy problems in polar coordinates using
  discrete Hankel transforms}.
\newblock {\em arXiv preprint arXiv:2210.09736}, 2022.

\end{thebibliography}

\appendix
\section{Proof of Theorem \ref{thm1}}\label{sec:proof_main_result}
This section proves Theorem~\ref{thm1}.

\subsection{Proof of accuracy of Algorithm \ref{algo}}
Let $\tilde{\alpha}_i$ be the output of Algorithm \ref{algo}, including the
error from the NUFFT and fast interpolation steps. By composing the steps of
the algorithm, we have 
$$
\tilde{\alpha}_i =  c_i h \left( \sum_{k=0}^{q-1} \left( \frac{\imath^{n_i}}{s}
\sum_{\ell = 0 }^{s-1} \left( \sum_{j=1}^p f_j e^{-\imath x_j \cdot \xi_{k
\ell}} + \delta^\text{nuf}_{k \ell} \right)e^{- \imath n_i \phi_\ell} \right)
u_k(\lambda_i) +  \delta^\text{fst}_i\right),
$$
where $\delta^\text{nuf}_{k \ell}$ and $\delta^{fst}_i$ denote the error from
the NUFFT and fast interpolation, respectively. These satisfy
$\ell^1$-$\ell^\infty$ relative error bounds
\begin{equation} \label{err:nuf}
\|\delta^\text{nuf}\|_{\ell^\infty} 
\le \varepsilon^\text{nuf} \sum_{j=1}^p |f_j e^{-\imath x_j \cdot \xi_{k \ell}}| = \varepsilon^\text{nuf} \|f\|_{\ell^1},
\end{equation}
and (using $\varepsilon^\text{nuf} \le 1$ which we can ensure holds) 
\begin{equation} \label{err:fst}
\|\delta^\text{fst}\|_{\ell^\infty} 
\le \varepsilon^\text{fst} \sum_{k=0}^{q-1} \left| \frac{\imath^{n_i}}{s}
\sum_{\ell=0}^{s-1} \left( \sum_{j=1}^p f_j e^{-\imath x_j \cdot \xi_{k \ell}}
+ \delta_{k \ell}^{\text{nuf}} \right) e^{-\imath n_i \phi_\ell} \right|
\le
2 \varepsilon^\text{fst} q \|f\|_{\ell^1},
\end{equation}
where $\varepsilon^\text{nuf}$ and $\varepsilon^\text{fst}$ are the relative
error parameters for the NUFFT and fast interpolation, respectively. 
Let
$\alpha_i$ denote the output of Algorithm \ref{algo} without the NUFFT and fast
interpolation error terms, i.e.,
\begin{equation}\label{eq:def_alpha_in_proof}
\alpha_i =  c_i h \sum_{k=0}^{q-1} \frac{\imath^{n_i}}{s}
\sum_{\ell = 0 }^{s-1}  \sum_{j=1}^p f_j e^{-\imath x_j \cdot \xi_{k
\ell}}  e^{- \imath n_i \phi_\ell}
u_k(\lambda_i).
\end{equation}
We have
\begin{equation}
\begin{split}
|\alpha_i - \tilde{\alpha}_i| &\le  c_i h \left( \sum_{k=0}^{q-1} \frac{1}{s} \sum_{\ell=0}^{s-1} \|\delta^\text{nuf}\|_{\ell^\infty} |u_k(\lambda_i)| + \|\delta^\text{fst}\|_{\ell^\infty}\right) \\
&\le c_i h \left( \|\delta^\text{nuf}\|_{\ell^\infty} \sum_{k=0}^{q-1} |u_k(\lambda_i)| + \|\delta^\text{fst}\|_{\ell^\infty} \right) \\
&\le  2 \sqrt{2} \left( \|\delta^\text{nuf}\|_{\ell^\infty} (2 + \frac{\pi}{2}
\log q) + \|\delta^\text{fst}\|_{\ell^\infty} \right), 
\end{split}
\end{equation}
where the final inequality follows from Lemma \ref{lemma:chebyshev_perturbation} and the 
fact that
$$
c_ih \leq 2\sqrt{2},
$$
which follows from the definition of $h$ in \eqref{eq:pixel_locs} and Lemma \ref{lem:cj}. Combining this equality with \eqref{err:nuf} and
\eqref{err:fst} gives
\begin{equation} \label{nuf+fst}
\|\alpha - \tilde{\alpha}\|_{\ell^\infty} \le 2 \sqrt{2} \left( \varepsilon^\text{nuf} \left(2 + \frac{\pi}{2} \log q \right) + 2 q \varepsilon^\text{fst} \right) \|f\|_{\ell^1}.
\end{equation}
Setting 
\begin{equation} \label{eq:nufst}
\varepsilon^\text{nuf} = (2 \sqrt{2}(2 + \frac{\pi}{2} \log q))^{-1}
(\varepsilon/4) \quad \text{and} \quad \varepsilon^\text{fst} = (4
\sqrt{2} q)^{-1} (\varepsilon/4),
\end{equation}
gives
\begin{equation} \label{errp1}
\|\alpha - \tilde{\alpha}\|_{\ell^\infty} \le \frac{\varepsilon}{2} \|f\|_{\ell^1}.
\end{equation}
By the definition of $\alpha_i$ in \eqref{eq:def_alpha_in_proof}, we have
\begin{equation}
\begin{split}
\alpha_i &= c_i h \sum_{j=1}^p f_j \left( \sum_{k=0}^{q-1} \frac{\imath^{n_i}}{s}
\sum_{\ell = 0 }^{s-1} e^{-\imath x_j \cdot \xi_{k \ell}}  e^{- \imath n_i
\phi_\ell} u_k(\lambda_i)  \right) \\
&= c_i h \sum_{j=1}^p f_j \left(J_{n_i}(r_j \lambda_i) e^{-\imath n_i
\theta_j} + \delta_{i j}^\text{dis} \right) = (B^* f)_i + c_i h \sum_{j=1}^p f_j \delta_{i j}^\text{dis},
\end{split}
\end{equation} 
where the third equality follows from Lemma \ref{lem:dis} with $\delta^\text{dis}_{i j}$ a discretization error that is bounded by $\|\delta^\text{dis}\|_{\ell^\infty} \le
(3 + \frac{\pi}{2} \log q)
\varepsilon^\text{dis}$. It follows that
$$
|\alpha_i - (B^*f)_i | \le c_i h \|f\|_{\ell^1} \|\delta^\text{dis}\|_{\ell^\infty} \le 2 \sqrt{2} 
(3 + \frac{\pi}{2} \log q)
\varepsilon^\text{dis} \|f\|_{\ell^1}.
$$
Setting 
\begin{equation} \label{eq:epsdis}
\varepsilon^\text{dis} = (2 \sqrt{2}(3 + \frac{\pi}{2} \log{(2.4\sqrt{p}) }))^{-1} \frac{\varepsilon}{2},
\end{equation}
and combing with \eqref{errp1} gives
$$
\|\alpha - B^* f \|_{\ell^\infty} \le \varepsilon \|f\|_{\ell^1} ,
$$
which completes the proof of the accuracy guarantees for Algorithm \ref{algo}.

\subsection{Proof of accuracy of Algorithm \ref{algoB}}

Let $\tilde{f}$ be the output of Algorithm \ref{algoB}, including the
approximation error from using fast interpolation and the NUFFT. By composing the steps of Algorithm \ref{algoB} we have
$$
\tilde{f}_j = \sum_{k=0}^{q-1} \sum_{\ell=0}^{s-1} \left( \sum_{n = -N_m}^{N_m} \left( \sum_{i: n_i = n} u_k(\lambda_i) c_i h \alpha_i + \delta_{nk}^\text{fst} \right) \frac{(-\imath)^n}{s} e^{\imath n \phi_\ell} \right)
 e^{-\imath x_j \cdot \xi_{k \ell}} + \delta_j^\text{nuf},
$$
where $\delta_{nk}^\text{fst}$ and $\delta_j^\text{nuf}$ denote the error from
the fast interpolation and NUFFT, respectively, which satisfy
$\ell^1$-$\ell^\infty$ relative error bounds. We have
$$
\|\delta^\text{fst}_{n}\|_{\ell^\infty} \le \varepsilon^\text{fst} \sum_{i :
n_i = n} c_i h |\alpha_i| \le 2 \sqrt{2} \varepsilon^\text{fst} \sum_{i : n_i =
n} |\alpha_i|,
$$
where $\delta_n^\text{fst} = (\delta_{nk}^\text{fst})_{k =0}^{q-1}$, and
\begin{equation}
\begin{split}
\|\delta^\text{nuf}\|_{\ell^\infty} &\le \varepsilon^\text{nuf}
\sum_{k=0}^{q-1} \sum_{\ell=0}^{s-1} \frac{1}{s} \left| \sum_{n = -N_m}^{N_m}
\left( \sum_{i : n_i = n} u_k(\lambda_i) c_i h \alpha_i +
\delta_{nk}^\text{fst} \right) \right| \\ 
& \le  \varepsilon^\text{nuf}  \sum_{i=1}^m
\left( \sum_{k=0}^{q-1} 
|u_k(\lambda_i)| \right) c_i h |\alpha_i |  + \varepsilon^\text{nuf} \sum_{k=0}^{q-1} \sum_{n=-N_m}^{N_m} \!\!\!\!\! 2 \sqrt{2} \varepsilon^\text{fst} \!\! \sum_{i : n_i = n} \!\!|\alpha_i| \\
&\le \varepsilon^\text{nuf} \left( \left( 2 + \frac{\pi}{2} \log q \right) 2 \sqrt{2} \|\alpha\|_{\ell^1} +  q 2 \sqrt{2} \varepsilon^\text{fst} \|\alpha\|_{\ell^1} \right) \\
&\le  \varepsilon^\text{nuf} 2 \sqrt{2} \left( \left( 2 + \frac{\pi}{2} \log q \right)  +  1\right) \|\alpha\|_{\ell^1},
\end{split}
\end{equation}
where the final inequality assumes $q  \varepsilon^\text{fst} \le 1$.
Let $f_j$ denote the output of Algorithm \ref{algoB}, ignoring the error from the fast interpolation and NUFFT, i.e.,
\begin{equation}\label{eq:def_f_in_proof}
f_j = \sum_{k=0}^{q-1} \sum_{\ell=0}^{s-1}  \sum_{n = -N_m}^{N_m}  \sum_{i: n_i = n} u_k(\lambda_i) c_i h \alpha_i  \frac{(-\imath)^n}{s} e^{\imath n \phi_\ell} 
 e^{-\imath x_j \cdot \xi_{k \ell}} .
\end{equation}
We have
\begin{equation}
\begin{split}
|f_j - \tilde{f}_j| &\le \left( \sum_{k=0}^{q-1} \sum_{\ell=0}^{s-1}
\frac{1}{s} \sum_{n=-N_m}^{N_m} |\delta_{nk}^\text{fst}| \right) +
|\delta_j^\text{nuf}| \\ 
&\le \left( \sum_{k=0}^{q-1} 2 \sqrt{2} \varepsilon^\text{fst} \|\alpha\|_{\ell^1} \right) +
\varepsilon^\text{nuf} 2 \sqrt{2} \left( \left( 2 + \frac{\pi}{2} \log q
\right)  +  1 \right) \|\alpha\|_{\ell^1} \\
&\le
\left( \varepsilon^\text{fst} 2 \sqrt{2} q +
\varepsilon^\text{nuf} 2 \sqrt{2}  \left( 3 + \frac{\pi}{2} \log q
\right)    \right) \|\alpha\|_{\ell^1} .
\end{split}
\end{equation}
Setting 
\begin{equation}\label{eq:error_nufft_B_proof}
\varepsilon^\text{nuf} = (2 \sqrt{2} (3 + \frac{\pi}{2} \log q))^{-1} (\varepsilon/4)  \quad \text{and} \quad \varepsilon^\text{fst} = (2 \sqrt{2} q)^{-1} (\varepsilon/4)
\end{equation}
gives
$$
\|f - \tilde{f}\|_{\ell^\infty} \le \frac{\varepsilon}{2} \|\alpha\|_{\ell^1}.
$$
By the definition of $f_j$ in \eqref{eq:def_f_in_proof}, we have
\begin{equation}
\begin{split}
f_j 
&= \sum_{k=0}^{q-1} \sum_{\ell=0}^{s-1} \sum_{i=1}^m u_k(\lambda_i) c_i h \alpha_i \frac{(-\imath)^n}{s} e^{\imath n \phi_\ell} 
 e^{-\imath x_j \cdot \xi_{k
\ell}}
\\
&= \sum_{i=1}^m c_i h \alpha_i \left( \sum_{k=0}^{q-1} \frac{(-\imath)^n}{s}
\sum_{\ell=0}^{s-1} u_k(\lambda_i)  e^{\imath n \phi_\ell} e^{-\imath x_j \cdot
\xi_{k \ell}}
\right) \\
&= \sum_{i=1}^m c_i h \alpha_i \left( J_{n_i} (r_j \lambda_i) e^{\imath n_i \theta_j} + \bar{\delta}^\text{dis}_{i j} \right) = (B f)_j + \sum_{i=1}^m c_i h \alpha_i \bar{\delta}^\text{dis}_{i j},
\end{split}
\end{equation}
where the third equality follows from Lemma \ref{lem:dis} with $\bar{\delta}^\text{dis}_{ij}$ a discretization error that satisfies
$\|\bar{\delta}^\text{dis}\|_{\ell^\infty} \le (3 +\frac{\pi}{2} \log q)\varepsilon^\text{dis}$ . It follows that
$$
\|f - B \alpha\|_{\ell^\infty} \le 2 \sqrt{2} (3 \frac{\pi}{2} \log q) \varepsilon^\text{dis} \|\alpha
\|_{\ell^\infty} .
$$
Setting 
\begin{equation} \label{eq:epsdis2}
\varepsilon^\text{dis} =  (2 \sqrt{2}(3 + \frac{\pi}{2} \log{(2.4\sqrt{p})}))^{-1} \frac{\varepsilon}{2}
\end{equation}
and combining with \eqref{errp1} gives
$$
\|f - B \alpha \|_{\ell^\infty} \le \varepsilon \|\alpha\|_{\ell^1} ,
$$
which completes the proof of the accuracy guarantees for Algorithm \ref{algoB}. \qed

\subsection{Proof of computational complexity of Algorithm \ref{algo} and \ref{algoB}. }

\subsubsection{Computational complexity of NUFFT} 
Both Algorithms \ref{algo} and \ref{algoB} use the same number of source points and target points and have asymptotically similar error parameters, and thus have the same computational complexity. In both cases,
the number of source points is $p$, the number of target points is $s q = \mathcal{O}(p)$ (see the definition of $s$ and $q$ in 
Lemmas \ref{lem:num_angular_nodes} and \ref{lem:num_radial_nodes}), and the error parameter $\varepsilon^\text{nuf} = \mathcal{O}(\varepsilon/\log q)$, see \eqref{eq:nufst} and \eqref{eq:error_nufft_B_proof}, it follows that the computational complexity is $\mathcal{O} (p\log p + p\left|\log \varepsilon - \log \log q \right|^2 )$, see \eqref{eq:nufft}
or \cite{barnett2021aliasing, barnett2019parallel}, which simplifies to
$\mathcal{O}(p\log p + p\left|\log \varepsilon \right|^2)$
operations.

\subsubsection{Computational complexity of FFT} 
Algorithms \ref{algo} and \ref{algoB} use an FFT and inverse FFT (which both have the same computational complexity) on a similar amount of data. In particular, they perform $\mathcal{O}(\sqrt{p})$ applications of the FFT of size $\mathcal{O}(\sqrt{p})$. Therefore, the computational complexities are $\mathcal{O} (p\log p)$.

\subsubsection{Computational complexity of fast interpolation}

There are a number of ways to perform fast polynomial interpolation, see Remark \ref{rmk:fast_interp}. For consistency with the rest of the paper, assume that fast interpolation is performed using the NUFFT whose computational complexity in dimension $d$ is stated in \eqref{eq:nufft}. 

Recall that $N_m = \max\{ n_j \in \mathbb{Z} : j \in \{1,\ldots,m\} \}$ and $K_n = \max \{ k \in \mathbb{Z}_{>0} : \lambda_{n k} \le \lambda \text{ for some } n \in \mathbb{Z}\}$. By \eqref{eq:bound_N_m} we have $N_m \le \sqrt{\pi p}.$ Fix $n \in \{-N_m,\ldots,N_m\}$, we need to compute a polynomial interpolation from $q = \mathcal{O}(\sqrt{p})$ source points (Chebyshev nodes) to $K_n$ target nodes. The computational complexity of each interpolation problem to $\ell^1$-$\ell^\infty$ relative error $\delta$ is $\mathcal{O}(\sqrt{p} \log p + K_n |\log \delta|)$. Summing over the $2N_m+1 = \mathcal{O}(\sqrt{p})$ interpolation problems gives a total complexity of $\mathcal{O}(p \log p + p |\log \delta|)$, where we used the fact that $\sum_{n=-N_m}^{N_m} K_n = m = \mathcal{O}(p)$.
It follows from \eqref{eq:nufst} and \eqref{eq:error_nufft_B_proof} that the computational complexities are $\mathcal{O}(p \log p + p |\log \varepsilon|)$. 

\subsubsection{Summary} Since all of the steps are $\mathcal{O}(p\log p + p |\log \varepsilon|^2)$ the proof is complete.
\qed

\subsection{Technical Lemmas}\label{sec:technical_lemmas}
We first state and prove a lemma that combines 
Lemma \ref{lem:num_angular_nodes} and Lemma \ref{lem:num_radial_nodes}.
\begin{lemma} \label{lem:dis}
Let $s$ and $q$ be defined by Lemma \ref{lem:num_angular_nodes} and  \ref{lem:num_radial_nodes} with accuracy parameter $\gamma > 0$. Then
$$
\left| \sum_{k=0}^{q-1} \frac{\imath^{n_i}}{s}
\sum_{\ell = 0 }^{s-1} e^{-\imath x_j \cdot \xi_{k \ell}}  e^{- \imath n_i
\phi_\ell} u_k(\lambda_i) - J_n(r_j \lambda_i) e^{-\imath n_i \theta_j} \right| \le \left(3 + \frac{\pi}{2}\log q\right)\gamma,
$$
for $i \in \{ 1, \ldots , m\}$, and $j\in \{1, \ldots , p\}$.
\end{lemma}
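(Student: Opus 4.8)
The plan is to view the left-hand side as the degree-$(q-1)$ Chebyshev interpolant, evaluated at $\lambda_i$, of the function $\rho \mapsto J_{n_i}(r_j\rho)e^{-\imath n_i\theta_j}$, but assembled from \emph{angularly discretized} samples at the nodes $t_k$ rather than exact ones. Accordingly I would introduce the exact nodal values $v_k := J_{n_i}(r_j t_k)e^{-\imath n_i\theta_j}$ and the computed nodal values $b_k := \frac{\imath^{n_i}}{s}\sum_{\ell=0}^{s-1} e^{-\imath x_j\cdot\xi_{k\ell}}e^{-\imath n_i\phi_\ell}$, so that the quantity to be bounded is $\sum_{k=0}^{q-1} b_k u_k(\lambda_i)$. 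Writing $P_{n_i}(\rho) := \sum_{k=0}^{q-1} v_k u_k(\rho)$ for the genuine Chebyshev interpolant, the error splits by the triangle inequality into a \emph{radial} part $|P_{n_i}(\lambda_i) - J_{n_i}(r_j\lambda_i)e^{-\imath n_i\theta_j}|$ and an \emph{angular} part $\bigl|\sum_{k=0}^{q-1} (b_k - v_k)u_k(\lambda_i)\bigr|$.

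For the radial part I would invoke Lemma \ref{lem:num_radial_nodes} directly: since $t_k\in[\lambda_1,\lambda_m]$ by \eqref{eq:def_chebyshev} and $\lambda_i\in[\lambda_1,\lambda_m]$, the polynomial $P_{n_i}$ is exactly the interpolant considered there for the chosen $q$, so this contribution is at most $\gamma$. For the angular part, each computed sample approximates the exact one with error $|b_k - v_k|\le\gamma$ uniformly in $k$: this is precisely Lemma \ref{lem:num_angular_nodes} applied at $\rho = t_k \in[\lambda_1,\lambda_m]$, whose limit is the value $J_{n_i}(r_j t_k)e^{-\imath n_i\theta_j}$ computed in \S\ref{sec:simplified}.

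The main quantitative step — and the source of the $\log q$ factor — is that this uniform nodal error is amplified by the Lebesgue constant of Chebyshev interpolation when propagated through the weights $u_k(\lambda_i)$. I would estimate $\bigl|\sum_{k=0}^{q-1}(b_k-v_k)u_k(\lambda_i)\bigr| \le \gamma\sum_{k=0}^{q-1}|u_k(\lambda_i)|$ and then apply the bound $\sum_{k=0}^{q-1}|u_k(\lambda_i)|\le 2+\frac{\pi}{2}\log q$ supplied by Lemma \ref{lemma:chebyshev_perturbation}. Adding the two contributions yields $\bigl(2+\frac{\pi}{2}\log q\bigr)\gamma + \gamma = \bigl(3+\frac{\pi}{2}\log q\bigr)\gamma$, which is the claimed bound.

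The hard part is exactly this amplification: one cannot merely add the angular and radial errors, because the angular errors at the $q$ nodes are recombined with the interpolation weights $u_k(\lambda_i)$, whose absolute sum grows logarithmically in $q$. Everything hinges on the Lebesgue-constant estimate; once it is in hand the argument is a two-term triangle inequality. A minor point to verify is the sign convention, since Lemma \ref{lem:num_angular_nodes} is stated with $e^{+\imath r_j\rho\cos(\theta_j-\phi_\ell)}$ whereas the sum here carries $e^{-\imath x_j\cdot\xi_{k\ell}}$; however, the derivative bound driving that lemma's proof is unchanged under this sign flip, so the same $\gamma$ estimate applies to the $b_k$ defined above.
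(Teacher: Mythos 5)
Your proof is correct and takes essentially the same route as the paper's: write each inner angular sum as the exact nodal value $J_{n_i}(r_j t_k)e^{-\imath n_i\theta_j}$ plus an angular error of size at most $\gamma$ (Lemma \ref{lem:num_angular_nodes}), bound the Chebyshev interpolation error at $\lambda_i$ by $\gamma$ (Lemma \ref{lem:num_radial_nodes}), and propagate the nodal errors through the weights via the Lebesgue-constant bound of Lemma \ref{lemma:chebyshev_perturbation}, giving $\gamma + \left(2+\frac{\pi}{2}\log q\right)\gamma$. Your closing remark on the sign convention is also well taken: the quadrature error estimate underlying Lemma \ref{lem:num_angular_nodes} is insensitive to the sign of the exponent, and with the minus sign the exact integral is indeed $J_{n_i}(r_j t_k)e^{-\imath n_i\theta_j}$, so the argument is complete.
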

\begin{proof}
We can write
\begin{equation}
\begin{split}
\sum_{k=0}^{q-1} \frac{\imath^{n_i}}{s}
\sum_{\ell = 0 }^{s-1} e^{-\imath x_j \cdot \xi_{k \ell}}  e^{- \imath n_i
\phi_\ell} u_k(\lambda_i) 
&= \sum_{k=0}^{q-1} \left( J_n(r_j t_k) e^{-\imath n_i \theta_j} + \delta_{k i j}^\text{ang}  \right) u_k(\lambda_i)  \\
&= J_n(r_j \lambda_i) e^{-\imath n_i \theta_j} + \delta_{ij}^\text{rad} + \sum_{k=0}^{q-1} \delta_{k i j}^\text{ang} u_k(\lambda_i),
\end{split}
\end{equation}
where $\delta_{k i j}^\text{ang}$ and $\delta^\text{rad}_{ij}$ are the errors from discretizing the angles and using interpolation in the radial direction, respectively. 
By Lemma \ref{lem:num_angular_nodes} and  \ref{lem:num_radial_nodes} it follows that the error satisfies
$$
|\delta_{ij}^\text{rad} + \sum_{k=0}^{q-1} \delta_{k i j}^\text{ang} u_k(\lambda_i)| \le \gamma + \gamma(2 + \frac{\pi}{2} \log q ),
$$
which completes the proof.
\end{proof}

We will use the following property of Chebyshev interpolation polynomials; see \cite[Eq. 11]{rokhlin1988fast}.
\begin{lemma}\label{lemma:chebyshev_perturbation}
Let $t_k$ be Chebyshev nodes of the first kind defined in \eqref{eq:def_chebyshev} for the interval $[\lambda_1,\lambda_m]$. Then,
$$
\sum_{k=0}^{q-1} |u_k(t)| \le 2 + \frac{2}{\pi}\log q,
\quad \text{where} \quad
u_k(t) = \frac{\prod_{\ell \neq k} (t-t_\ell)}{\prod_{\ell \neq k} (t_k-t_\ell)}, 
$$
for all $t \in [\lambda_1,\lambda_m]$.
\end{lemma}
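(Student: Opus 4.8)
The plan is to recognize the left-hand side as the \emph{Lebesgue function} $\Lambda_q(t) := \sum_{k=0}^{q-1}|u_k(t)|$ associated with the Chebyshev nodes of the first kind, whose supremum over the interval is the classical Lebesgue constant. The first reduction is to observe that $\sum_k |u_k(t)|$ is invariant under affine reparametrization: Lagrange interpolation commutes with the affine map sending $[\lambda_1,\lambda_m]$ to $[-1,1]$, so the polynomials $u_k$ defined in \eqref{eq:def_inter_poly} coincide, after this change of variables, with the standard Lagrange basis at the nodes $\tau_k = \cos\theta_k$, $\theta_k = \frac{(2k+1)\pi}{2q}$, on $[-1,1]$. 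Thus it suffices to prove $\sum_{k=0}^{q-1}|u_k(\tau)| \le 2 + \frac{2}{\pi}\log q$ for all $\tau \in [-1,1]$.

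Next I would derive a closed form for the basis polynomials on $[-1,1]$. Since the $\tau_k$ are exactly the roots of the Chebyshev polynomial $T_q$, one has $\prod_\ell (\tau - \tau_\ell) = 2^{1-q}T_q(\tau)$, which gives
$$
u_k(\tau) = \frac{T_q(\tau)}{(\tau - \tau_k)\,T_q'(\tau_k)}.
$$
Writing $\tau = \cos\theta$ and using $T_q(\cos\theta) = \cos(q\theta)$ together with $T_q'(\cos\theta) = q\sin(q\theta)/\sin\theta$, the identities $\sin(q\theta_k) = (-1)^k$ and $\cos(q\theta_k) = 0$ yield $T_q'(\tau_k) = (-1)^k q/\sin\theta_k$, so that
$$
\Lambda_q(\cos\theta) = \frac{|\cos(q\theta)|}{q}\sum_{k=0}^{q-1}\frac{\sin\theta_k}{|\cos\theta - \cos\theta_k|}.
$$

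The remaining and most delicate step is to bound this trigonometric sum uniformly in $\theta$. For fixed $\theta$, I would locate the node $\theta_{k_0}$ closest to $\theta$, bound the one or two terms with $k$ near $k_0$ by an absolute constant (using $|\cos(q\theta)| \le 1$ and a local expansion of $\cos\theta - \cos\theta_k$), and control the remaining terms by comparing the sum $\sum_{k \neq k_0} \frac{\sin\theta_k}{|\cos\theta-\cos\theta_k|}$ to a Riemann sum, with spacing $\pi/q$, for an integral whose singular part behaves like $\int \frac{d\phi}{|\sin\frac{\theta-\phi}{2}|}$. The logarithmic divergence of that integral near $\phi = \theta$ produces the $\log q$ growth, and tracking the normalization $1/q$ together with the factor $\cos(q\theta)$ is exactly what delivers the sharp constant $2/\pi$. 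This comparison of a discrete trigonometric sum to a logarithmically divergent integral, with the constant pinned down, is the technical heart of the argument; I expect it to be the main obstacle, and it is the step for which the cited estimate \cite[Eq.~11]{rokhlin1988fast} can be invoked directly if one is content to quote the known value of the Lebesgue constant for first-kind Chebyshev nodes rather than rederive it.
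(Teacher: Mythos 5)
Your proposal is correct in substance, but it is worth pointing out that the paper itself contains no proof of this lemma at all: it is stated as a known fact with a pointer to \cite[Eq.~11]{rokhlin1988fast}, i.e., the classical bound on the Lebesgue constant for Chebyshev nodes of the first kind. What you have written is the standard derivation of that classical fact, and your intermediate computations are all sound: the affine invariance of the Lagrange basis (each factor $(t-t_\ell)/(t_k-t_\ell)$ is unchanged under an affine change of variables applied to both nodes and evaluation point), the closed form $u_k(\tau) = T_q(\tau)/\bigl((\tau-\tau_k)T_q'(\tau_k)\bigr)$ from $\prod_\ell(\tau-\tau_\ell)=2^{1-q}T_q(\tau)$, the values $T_q'(\tau_k)=(-1)^k q/\sin\theta_k$, and the resulting expression $\Lambda_q(\cos\theta)=\frac{|\cos(q\theta)|}{q}\sum_k \frac{\sin\theta_k}{|\cos\theta-\cos\theta_k|}$. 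The only part you do not actually execute is the uniform bound on this trigonometric sum---the Riemann-sum comparison with the logarithmically divergent integral that pins down the constant $2/\pi$---and you candidly flag this as the technical heart, offering to fall back on the cited estimate. That fallback collapses your argument to exactly what the paper does (quote the known Lebesgue constant, which satisfies $\Lambda_q \le 1+\frac{2}{\pi}\log q \le 2+\frac{2}{\pi}\log q$), so the two routes differ mainly in what they buy: carrying your sketch to completion would make the lemma self-contained and independent of the reference, at the cost of a page of careful estimation, while the paper's bare citation buys brevity. Neither is wrong; just be aware that as written your proof is a correct skeleton whose load-bearing step is either unproven or outsourced to the same citation the paper uses.
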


We will also require a classical result on discretization errors for integrals of smooth periodic functions.
\begin{lemma} \label{newlemma}
 Suppose that $g : [0,2\pi] \rightarrow \mathbb{C}$ is a smooth periodic 
function on the torus $[0,2\pi]$ where $0$ and $2\pi$ are identified. Then
\begin{equation}\label{eq:periodic_int_error}
     \left| \int_0^{2\pi} g(\phi) d \phi - \frac{2\pi}{s} \sum_{\ell=0}^{s-1} g(\phi_\ell) \right|
 < 4 \frac{\|g^{(s)}\|_{L^1}}{s^s},
\end{equation}
for all $s \ge 2$,
where $\phi_\ell = 2\pi \ell /s$, where $g^{(s)}(\phi)$ denotes the $s$-th derivative of $g(\phi)$ with respect to $\phi$.
\end{lemma}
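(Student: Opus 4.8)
The plan is to prove this classical \emph{aliasing} bound (a quantitative form of Poisson summation for the equispaced quadrature rule) using Fourier series. Since $g$ is smooth and $2\pi$-periodic, it has an absolutely and uniformly convergent Fourier expansion $g(\phi) = \sum_{n \in \mathbb{Z}} \hat{g}_n e^{\imath n \phi}$ with $\hat{g}_n = \frac{1}{2\pi}\int_0^{2\pi} g(\phi) e^{-\imath n \phi} d\phi$. The exact integral isolates a single mode, $\int_0^{2\pi} g(\phi) d\phi = 2\pi \hat{g}_0$, so the whole argument reduces to identifying which modes the equispaced rule captures and then estimating the rest.

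First I would substitute the Fourier expansion into the quadrature sum and interchange the (uniformly convergent) infinite sum with the finite sum over $\ell$. The discrete orthogonality relation $\sum_{\ell=0}^{s-1} e^{\imath n \phi_\ell} = s$ when $s \mid n$ and $0$ otherwise collapses the result to
$$
\frac{2\pi}{s} \sum_{\ell=0}^{s-1} g(\phi_\ell) = 2\pi \sum_{k \in \mathbb{Z}} \hat{g}_{ks}.
$$
Subtracting the exact value $2\pi \hat{g}_0$ shows that the quadrature error equals $-2\pi \sum_{k \neq 0} \hat{g}_{ks}$; that is, the error is exactly the contribution of the frequencies that alias onto the zero mode under sampling at rate $s$.

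Next I would bound the aliased coefficients by regularity. Integrating by parts $s$ times and using periodicity to annihilate the boundary terms gives $\hat{g}_n = (2\pi (\imath n)^s)^{-1} \int_0^{2\pi} g^{(s)}(\phi) e^{-\imath n \phi} d\phi$, hence $|\hat{g}_n| \le \|g^{(s)}\|_{L^1} / (2\pi |n|^s)$ for $n \neq 0$. Summing over $k \neq 0$ with $n = ks$ then yields
$$
\left| \int_0^{2\pi} g(\phi) d\phi - \frac{2\pi}{s}\sum_{\ell=0}^{s-1} g(\phi_\ell) \right| \le \frac{\|g^{(s)}\|_{L^1}}{s^s} \sum_{k \neq 0} \frac{1}{|k|^s} = \frac{2 \zeta(s)}{s^s} \|g^{(s)}\|_{L^1}.
$$

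The only remaining point, and the place where the stated constant $4$ is pinned down, is to control $2\zeta(s)$. Since $\zeta$ is decreasing on $[2,\infty)$, for every $s \ge 2$ we have $2\zeta(s) \le 2\zeta(2) = \pi^2/3 < 4$, which gives the strict inequality \eqref{eq:periodic_int_error}. I do not anticipate a genuine obstacle: the single subtlety is justifying the interchange of summation and integration, which is immediate from the rapid decay of $\hat{g}_n$ guaranteed by smoothness of $g$, and the hypothesis $s \ge 2$ is needed only so that the tail series $\sum_{k \neq 0} |k|^{-s}$ converges and so that $2\zeta(s) < 4$.
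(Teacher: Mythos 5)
Your proof is correct, and it is worth noting that it does not follow the paper's route for the simple reason that the paper does not prove this lemma at all: it is dispatched with a citation to Theorem~1.1 of Kurganov--Tsynkov \cite{kurganov2009order}. Your argument is the standard self-contained proof of spectral accuracy of the equispaced (periodic trapezoidal) rule: Fourier expansion, discrete orthogonality $\sum_{\ell=0}^{s-1} e^{\imath n \phi_\ell} = s$ if $s \mid n$ and $0$ otherwise (which identifies the quadrature error \emph{exactly} as the aliased contribution $-2\pi\sum_{k\neq 0}\hat{g}_{ks}$), $s$-fold integration by parts to get $|\hat{g}_n| \le \|g^{(s)}\|_{L^1}/(2\pi|n|^s)$, and summation of the tail. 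Each step checks out, including the interchange of sums (justified by the rapid decay of $\hat{g}_n$) and the final constant: your bound is $2\zeta(s)\,\|g^{(s)}\|_{L^1}/s^s$ with $2\zeta(s) \le 2\zeta(2) = \pi^2/3 < 4$ for $s \ge 2$, which is in fact slightly sharper than the constant $4$ in the statement, and sharper still for large $s$ since $2\zeta(s) \to 2$. What your approach buys is transparency (the error is precisely the frequencies that alias onto the zero mode, which is conceptually aligned with how the lemma is used in Lemma~\ref{lem:num_angular_nodes}) and a self-contained paper; what the citation buys is brevity. One pedantic caveat, which afflicts the paper's statement equally and is not a defect of your argument: when $g^{(s)} \equiv 0$ (i.e., $g$ is constant, by periodicity), the quadrature is exact and both sides of \eqref{eq:periodic_int_error} vanish, so the strict inequality should really be read as non-strict, or the degenerate case excluded.
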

See \cite[Theorem $1.1$]{kurganov2009order} for a proof. Lastly, we prove an upper bound on the normalization constants $c_{nk}$.
\begin{lemma}\label{lem:cj}
If $\lambda_{nk} \le \sqrt{\pi p}$, then the constants $c_{nk}$ satisfy $|c_{nk}| \le \sqrt{2 p}$.
\end{lemma}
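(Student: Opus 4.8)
The plan is to control the normalization constant through the size of the Bessel function's derivative at its own zero. Starting from the definition \eqref{eq:eigenfun_const} we have $c_{nk}^2 = \left(\pi J_{n+1}(\lambda_{nk})^2\right)^{-1}$, so the claim $|c_{nk}| \le \sqrt{2p}$ is equivalent to the lower bound $J_{n+1}(\lambda_{nk})^2 \ge 1/(2\pi p)$. Since $\lambda_{nk}$ is a zero of $J_n$, the recurrence $J_n'(x) = \tfrac{n}{x}J_n(x) - J_{n+1}(x)$ gives $|J_{n+1}(\lambda_{nk})| = |J_n'(\lambda_{nk})|$, and the relation $J_{-\nu} = (-1)^{\nu}J_{\nu}$ shows that the relevant zeros and derivative magnitudes depend only on $\nu := |n|$. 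Hence it suffices to prove $J_{\nu+1}(j_{\nu,k})^2 \ge 1/(2\pi p)$, where $j_{\nu,k} = \lambda_{nk}$ is the $k$-th positive zero of $J_\nu$ for the integer $\nu = |n| \ge 0$.

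First I would invoke the Wronskian identity $J_\nu(x)Y_\nu'(x) - J_\nu'(x)Y_\nu(x) = 2/(\pi x)$ from \cite[10.5.2]{dlmf}. Evaluated at $x = j_{\nu,k}$, where $J_\nu$ vanishes, it collapses to $J_\nu'(j_{\nu,k})^2\, Y_\nu(j_{\nu,k})^2 = 4/(\pi^2 j_{\nu,k}^2)$, that is,
$$
J_{\nu+1}(j_{\nu,k})^2 = J_\nu'(j_{\nu,k})^2 = \frac{4}{\pi^2 j_{\nu,k}^2\, Y_\nu(j_{\nu,k})^2}.
$$
The purpose of this rewriting is that $Y_\nu(j_{\nu,k})^2 = M_\nu(j_{\nu,k})^2$, where $M_\nu^2 = J_\nu^2 + Y_\nu^2$ is the Nicholson amplitude, for which there is a clean classical upper bound $M_\nu(x)^2 \le 2/(\pi\sqrt{x^2-\nu^2})$ valid for $x > \nu \ge 0$ (and reading $2/(\pi x)$ when $\nu = 0$); see \cite[10.18.17]{dlmf}. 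Substituting this bound yields the key inequality
$$
J_{\nu+1}(j_{\nu,k})^2 \ge \frac{2\sqrt{j_{\nu,k}^2 - \nu^2}}{\pi j_{\nu,k}^2}.
$$

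To finish, I would combine this with $\lambda_{nk} = j_{\nu,k} \le \sqrt{\pi p}$, so $j_{\nu,k}^2 \le \pi p$, splitting into two cases. For $\nu = 0$ the right-hand side is exactly $2/(\pi j_{0,k}) \ge 2/(\pi\sqrt{\pi p})$, giving $c_{0k}^2 \le \sqrt{\pi p}/2 \le 2p$ for $p \ge 1$. For $\nu \ge 1$ I would bound the gap using the classical first-zero estimate $j_{\nu,1}^2 \ge \nu(\nu+2)$, so that $j_{\nu,k}^2 - \nu^2 \ge 2\nu \ge 2$; together with $j_{\nu,k}^2 \le \pi p$ this gives $J_{\nu+1}(j_{\nu,k})^2 \ge 2\sqrt{2}/(\pi^2 p)$ and hence $c_{nk}^2 \le \pi p/(2\sqrt{2}) < 2p$. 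In both cases $|c_{nk}| \le \sqrt{2p}$. The hard part will not be the algebra but pinning down the two classical Bessel inequalities with the correct constants and ranges of $\nu$: in particular the edge case $\nu = 0$ of the amplitude bound, and securing a lower bound on $j_{\nu,1}-\nu$ strong enough to keep $\sqrt{j_{\nu,k}^2-\nu^2}$ bounded away from zero when $\nu$ is close to $\lambda_{nk}$ (the regime $k=1$ with $\nu$ large), which is exactly where a naive $\Theta(1/\lambda_{nk})$ lower bound on $J_{\nu+1}^2$ would break down.
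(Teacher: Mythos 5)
Your proof is correct and follows essentially the same route as the paper: both arguments reduce the claim to a lower bound on $|J_\nu'|$ at its own zeros, derive the identical key inequality $J_\nu'(j_{\nu,k})^2 \ge 2\sqrt{j_{\nu,k}^2-\nu^2}\,/\,(\pi j_{\nu,k}^2)$ by combining the universal factor $2/(\pi x)$ with the amplitude upper bound $M_\nu(x)^2 \le 2/(\pi\sqrt{x^2-\nu^2})$, and conclude with a classical estimate keeping $\sqrt{j_{\nu,k}^2-\nu^2}$ bounded away from zero. The only cosmetic differences are that you obtain the identity at the zero from the Wronskian $J_\nu Y_\nu' - J_\nu' Y_\nu = 2/(\pi x)$ rather than the paper's equivalent amplitude--phase relation $M_\nu^2\theta_\nu' = 2/(\pi x)$, and you invoke Watson's first-zero bound $j_{\nu,1}^2 \ge \nu(\nu+2)$ where the paper uses Elbert's bound $\lambda_{nk} > n+2$.
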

\begin{proof}[Proof of Lemma \ref{lem:cj}]
We start with an alternate equivalent definition to \eqref{eq:eigenfun_const}:
\begin{equation}
        c_{nk} =
        \frac{1}{|\pi^{1/2} J_{n}'(\lambda_{n k})|}, 
        \quad \text{for} \quad (n,k) \in \mathbb{Z} \times \mathbb{Z}_{>0},
\end{equation}
see \cite[Eq. 10.6.3, Eq. 10.22.37]{dlmf}. 
By \cite[10.18.4, 10.18.6]{dlmf}
$$
J_n(x) = M_n(x) \cos( \theta_n(x)),
$$
where $M_n(x)^2 = J_n(x)^2 + Y_n(x)^2$ is a magnitude function, $Y_n$ is the $n$-th order Bessel function of the second kind, and $\theta_n(x)$ is a phase function. Taking the derivative gives
$$
J_n'(x) = M_n'(x) \cos(\theta_n(x)) - M_n(x) \sin(\theta_n(x)) \theta_n'(x).
$$
By \cite[8.479]{gradshteyn2014table}, we have
\begin{equation} \label{greq}
\frac{\pi }{2 \sqrt{x^2 - n^2}} \ge M_n(x)^2 \ge \frac{\pi }{2 x}.
\end{equation}
In particular, the magnitude function $M_n(x)$ does not vanish, so at a root $\lambda_{nk}$ of $J_n$, we must have $\theta_n(\lambda_{nk}) = \frac{\pi}{2} + \pi \ell$ for $\ell \in \mathbb{Z}$. It follows that
$$
J_n'(\lambda_{nk})^2 = M_n(\lambda_{nk})^2 \theta_n'(\lambda_{nk})^2.
$$
Using \cite[10.18.17]{dlmf} and \eqref{greq} gives
$$
J_n'(\lambda_{nk})^2 = \left( \frac{2}{\pi \lambda_{nk}} \right)^2 M_n(\lambda_{nk})^{-2}\ge \left(\frac{2}{\pi \lambda_{nk}} \right)^2 \frac{2 \sqrt{\lambda_{nk}^2 - n^2}}{\pi}.
$$
By \cite[Eq. 1.6]{elbert2001some} we have $\lambda_{nk} > n+ k \pi - \pi/2 + 1/2 > n+2$ for $(n,k) \in \mathbb{Z}_{\ge 0} \times \mathbb{Z}_{>0}$, which implies
$\sqrt{\lambda_{nk}^2 - n^2} \ge 2$ (this bound can be refined but is sufficient for the purpose of proving this lemma). Using this inequality together with the fact that $2 (2/\pi)^3 \ge 1/2$ gives
$$
c_{nk} = \frac{1}{\pi^{1/2} |J_{n}'(\lambda_{n k})|} \le \frac{2^{1/2} \lambda_{nk}}{\pi^{1/2}} \le\sqrt{2 p},
$$
where the final inequality follows from the assumption $\lambda_{nk} \le \sqrt{\pi p}$.
\end{proof}

\end{document}